\definecolor{dark-gray}{gray}{0.3}
\definecolor{dkgray}{rgb}{.4,.4,.4}
\definecolor{dkblue}{rgb}{0,0,.5}
\definecolor{medblue}{rgb}{0,0,.75}
\definecolor{rust}{rgb}{0.5,0.1,0.1}
\algrenewcommand\alglinenumber[1]{\sf\scriptsize\color{black}{#1}}
\algrenewcommand\algorithmicrequire{\textbf{Require:}}
\algrenewcommand\algorithmicensure{\textbf{Ensure:}}
\numberwithin{equation}{section}
\numberwithin{theorem}{section}
\numberwithin{figure}{section}
\crefname{fact}{Fact}{Facts}
\newcommand{\R}{\mathbb{R}}
\newcommand{\C}{\mathbb{C}}
\newcommand{\F}{\mathbb{F}}
\newcommand{\eps}{\varepsilon}
\newcommand{\vct}[1]{\bm{#1}}
\newcommand{\mtx}[1]{\bm{#1}}
\newcommand{\Id}{\mathbf{I}}
\newcommand{\rank}{\operatorname{rank}}
\newcommand{\diag}{\operatorname{diag}}
\newcommand{\norm}[1]{\Vert #1 \Vert}
\newcommand{\fnorm}[1]{\norm{#1}_{\mathrm{F}}}
\newcommand{\fnormsq}[1]{\fnorm{#1}^2}
\newcommand{\lowrank}[2]{\llbracket {#1} \rrbracket_{#2}}
\newcommand{\Expect}{\operatorname{\mathbb{E}}}
\newcommand{\argmin}{\operatorname{arg\,min}}
\title{Practical Sketching Algorithms \\ for Low-Rank Matrix Approximation\thanks{Dated 30 August 2016.  Revised 13 January 2017 and 6 June 2017 and 4 September 2017.
\funding{JAT and MU were supported in part by ONR Award N00014-11-1002 and the Gordon \& Betty Moore Foundation.
MU was also supported in part by DARPA Award FA8750-17-2-0101.
AY and VC were supported in part by the European Commission under Grant ERC Future Proof,
SNF 200021-146750, and SNF CRSII2-147633.}}}
\author{Joel A.~Tropp\thanks{California Institute of Technology, Pasadena, CA (\email{jtropp@cms.caltech.edu}).}\and Alp Yurtsever\thanks{{\'E}cole Polytechnique F{\'e}d{\'e}ral de Lausanne, Lausanne, Switzerland (\email{alp.yurtsever@epfl.ch}).}\and Madeleine Udell\thanks{Cornell University, Ithaca, NY (\email{mru8@cornell.edu}).}\and Volkan Cevher\thanks{{\'E}cole Polytechnique F{\'e}d{\'e}ral de Lausanne, Lausanne, Switzerland (\email{volkan.cevher@epfl.ch}).}}
\begin{document}

\maketitle

\begin{abstract}
This paper describes a suite of algorithms
for constructing low-rank approximations
of an input matrix from a random linear image of the matrix,
called a \emph{sketch}.
These methods can preserve structural properties of the
input matrix, such as positive-semidefiniteness,
and they can produce approximations with a user-specified rank.
The algorithms are simple, accurate,
numerically stable, and provably correct. Moreover, each method is accompanied by an informative
error bound that allows users to select parameters
\emph{a priori} to achieve a given approximation quality.
These claims are supported by numerical experiments
with real and synthetic data.
\end{abstract}

\begin{keywords}
Dimension reduction; matrix approximation;
numerical linear algebra; randomized algorithm;
single-pass algorithm; sketching;
streaming algorithm; subspace embedding.
\end{keywords}

\begin{AMS}
Primary, 65F30; Secondary, 68W20.
\end{AMS}

\section{Motivation}

This paper presents a framework for computing
structured low-rank approximations of a matrix
from a \emph{sketch}, which is a random low-dimensional linear image of the matrix.
Our goal is to develop simple, practical
algorithms that can serve as reliable modules
in other applications.
The methods apply for the real field ($\F = \R$)
and for the complex field ($\F = \C$).

\subsection{Low-Rank Matrix Approximation}

Suppose that $\mtx{A} \in \F^{m \times n}$ is an arbitrary matrix.
Let $r$ be a target rank parameter where $r \ll \min\{m ,n\}$.
The computational problem is to produce a low-rank approximation $\hat{\mtx{A}}$
of $\mtx{A}$ whose error is comparable to a best rank-$r$ approximation:
\begin{equation} \label{eqn:intro-low-rank}
\fnorm{ \mtx{A} - \hat{\mtx{A}}} \approx \min_{\rank(\mtx{B}) \leq r} \fnorm{ \mtx{A} - \mtx{B} }.
\end{equation}
The notation $\fnorm{\cdot}$ refers to the Frobenius norm.
We explicitly allow the rank of $\hat{\mtx{A}}$ to exceed
$r$ because we can obtain more accurate approximations of this form,
and the precise rank of $\hat{\mtx{A}}$ is unimportant in many applications.
There has been extensive research on randomized algorithms for \cref{eqn:intro-low-rank};
see Halko et al.~\cite{HMT11:Finding-Structure}.

\subsection{Sketching}

Here is the twist.
Imagine that our interactions with the matrix $\mtx{A}$ are severely constrained
in the following way.
We construct a linear map $\mathcal{L} : \F^{m \times n} \to \F^d$
that does not depend on the matrix $\mtx{A}$.  Our only mechanism for collecting
data $\mathsf{S}$ about $\mtx{A}$ is to apply the linear map $\mathcal{L}$:
\begin{equation} \label{eqn:lin-sketch}
\mathsf{S} := \mathcal{L}(\mtx{A}) \in \F^d.
\end{equation}
We refer to $\mathsf{S}$ as a \emph{sketch} of the matrix,
and $\mathcal{L}$ is called a \emph{sketching map}.  The
number $d$ is called the \emph{dimension} or \emph{size}
of the sketch.

The challenge is to make the sketch as small as possible
while collecting enough information to approximate the matrix accurately.
In particular, we want the sketch dimension $d$ to be much smaller than the total dimension $mn$
of the matrix $\mtx{A}$.  As a consequence, the sketching map $\mathcal{L}$ has a substantial null space.
Therefore, it is natural to draw the sketching map \emph{at random} so that we
are likely to extract useful information from any fixed input matrix.

\subsection{Why Sketch?\nopunct}

There are a number of situations where the sketching model \cref{eqn:lin-sketch}
is a natural mechanism for acquiring data about an input matrix.

First, imagine that $\mtx{A}$ is a huge matrix
that can only be stored outside of core memory.
The cost of data transfer may be substantial
enough that we can only afford to read the matrix
into core memory once~\cite[Sec.~5.5]{HMT11:Finding-Structure}.
We can build a sketch as we scan through the matrix.
Other types of algorithms for this problem appear
in~\cite{FSS12:Turning-Big,FRV16:Dimensionality-Reduction}.

Second, there are applications where the columns of the matrix $\mtx{A}$
are revealed one at a time, and we must be able to compute an approximation at any instant.
One approach is to maintain a sketch that is updated when a new column arrives.
Other types of algorithms for this problem appear
in~\cite{BGKL15:Online-Principal,JJK+16:Streaming-PCA}.

Third, we may encounter a setting where the matrix $\mtx{A}$
is presented as a sum of ordered updates:
\begin{equation} \label{eqn:additive-update}
\mtx{A} = \mtx{H}_1 + \mtx{H}_2 + \mtx{H}_3 + \mtx{H}_4 + \cdots.
\end{equation}
We must discard each innovation $\mtx{H}_i$ after it is
processed~\cite{CW09:Numerical-Linear,Woo14:Sketching-Tool}.
In this case, the random linear sketch \cref{eqn:lin-sketch}
is more or less the only way to maintain a representation
of $\mtx{A}$ through an arbitrary sequence of updates~\cite{LNW14:Turnstile-Streaming}.
Our research was motivated by a variant~\cite{YUTC16:Sketchy-Decisions} of the model~\eqref{eqn:additive-update};
see \cref{sec:updates}.

\subsection{Overview of Algorithms}

Let us summarize our basic approach to sketching and low-rank approximation
of a matrix.
Fix a target rank $r$ and an input matrix $\mtx{A} \in \F^{m \times n}$.
Select sketch size parameters $k$ and $\ell$.
Draw and fix independent standard normal matrices $\mtx{\Omega} \in \F^{n \times k}$
and $\mtx{\Psi} \in \F^{\ell \times m}$; see~\cref{def:std-normal-mtx}.
We realize the randomized linear sketch~\cref{eqn:lin-sketch} via left and right
matrix multiplication:
\begin{equation} \label{eqn:sketch-intro}
\mtx{Y} := \mtx{A\Omega}
\quad\text{and}\quad
\mtx{W} := \mtx{\Psi A}
\end{equation}
We can store the random matrices and the sketch using $(k+\ell)(m+n)$ scalars.
The arithmetic cost of forming the sketch is $\Theta((k+\ell)mn)$ floating-point
operations (flops) for a general matrix $\mtx{A}$.

Given the random matrices $(\mtx{\Omega}, \mtx{\Psi})$ and the sketch $(\mtx{Y}, \mtx{W})$,
we compute an approximation $\hat{\mtx{A}}$ in three steps:
\vspace{0.5pc}
\begin{enumerate}
\item	Form an orthogonal--triangular factorization $\mtx{Y} =: \mtx{QR}$ where $\mtx{Q} \in \F^{m \times k}$.
\item	Solve a least-squares problem to obtain $\mtx{X} := (\mtx{\Psi} \mtx{Q})^\dagger \mtx{W} \in \F^{k \times n}$.
\item	Construct the rank-$k$ approximation $\hat{\mtx{A}} := \mtx{QX}$.
\end{enumerate}
\vspace{0.5pc}
The total cost of this computation is $\Theta(kl (m + n))$ flops.
See \cref{sec:intuition} for the intuition behind this approach.

Now, suppose that we set the sketch size parameters $k = 2r+1$ and $\ell = 4r+2$.
For this choice, \cref{thm:err-frob} yields the error bound
$$
\Expect \fnorm{\mtx{A} - \hat{\mtx{A}}} \leq 2 \cdot \min_{\rank(\mtx{B}) \leq r} \fnorm{ \mtx{A} - \mtx{B} }.
$$
In other words, we typically obtain an approximation with rank $\approx 2r$ whose error lies
within twice the optimal rank-$r$ error!   Moreover, the total storage cost is about $6r(m+n)$,
which is comparable with the number of degrees of freedom in an $m \times n$ matrix with rank $r$,
so the sketch size cannot be reduced substantially.

\subsection{Our Contributions}
\label{sec:contributions}

This paper presents a systematic treatment of sketching algorithms
for low-rank approximation of a matrix.
All of the methods rely on the simple sketch~\cref{eqn:sketch-intro}
of the input matrix (\cref{sec:sketch}).
The main algorithm uses this sketch to compute a high-quality low-rank approximation $\hat{\mtx{A}}$
of the input matrix  (\cref{alg:detailed-low-rank-recon}).  We prove that this method automatically
takes advantage of spectral decay in the input matrix (\cref{thm:err-frob});
this result is new.

We also explain how to compute approximations
with additional structure---such as symmetry, positive semidefiniteness, or fixed rank---by
projecting the initial low-rank approximation onto the
family of structured matrices (\cref{sec:structured,sec:fixed-rank}).
This approach ensures that the structured approximations also exploit spectral decay
(\cref{fact:convex-structure,prop:fixed-rank-err}).  In the sketching context,
this idea is new.

Each algorithm is accompanied by an informative error bound
that provides a good description of its actual behavior.
As a consequence, we can offer the first concrete guidance on algorithm
parameters for various types of input matrices (\cref{sec:best-parameters}),
and we can implement the methods with confidence.
We also include pseudocode and an accounting of computational costs.

The paper includes a collection of numerical experiments (\cref{sec:experiments}).
This work demonstrates that the recommended algorithms
can significantly outperform alternative methods,
especially when the input matrix has spectral decay.
The empirical work also confirms our guidance on parameter choices.

Our technical report~\cite{TYUC17:Randomized-Single-View-TR}
contains some more error bounds for the reconstruction
algorithms.  It also documents additional numerical experiments.

\subsection{Limitations}
\label{sec:limitations}

The algorithms in this paper are not designed for all low-rank matrix
approximation problems.  They are specifically intended for environments
where we can only make a single pass over the input matrix or where
the data matrix is presented as a stream of linear updates.
When it is possible to make multiple passes over the input matrix, we recommend
the low-rank approximation algorithms documented in~\cite{HMT11:Finding-Structure}.
Multi-pass methods are significantly more accurate because they drive the error
of the low-rank approximation down to the optimal low-rank approximation error
exponentially fast in the number of passes.

\subsection{Overview of Related Work}
\label{sec:related}

Randomized algorithms for matrix approximation date back
to research~\cite{PRTV00:Latent-Semantic,FKV04:Fast-Monte-Carlo}
in theoretical computer science (TCS) in the late 1990s.
Starting around 2004, this work inspired numerical
analysts to develop practical algorithms for
matrix approximation and related problems
~\cite{MRT11:Randomized-Algorithm}.
See the paper~\cite[Sec.~2]{HMT11:Finding-Structure} for
a comprehensive historical discussion.
The surveys~\cite{Mah11:Randomized-Algorithms,Woo14:Sketching-Tool}
provide more details about the development of these ideas within the TCS literature.

\subsubsection{Sketching Algorithms for Matrix Approximation}

To the best of our knowledge, the first sketching algorithm
for low-rank matrix approximation appears in
Woolfe et al.~\cite[Sec.~5.2]{WLRT08:Fast-Randomized}.
Their primary motivation was to compute a low-rank matrix approximation faster than
any classical algorithm, rather than to work under the constraints of a sketching model. A variant of their approach is outlined in \cite[Sec.~5.5]{HMT11:Finding-Structure}.

Clarkson \& Woodruff~\cite{CW09:Numerical-Linear}
explicitly frame the question of how to perform numerical linear algebra tasks
under the sketching model \cref{eqn:lin-sketch}.
Among other things, they develop algorithms and lower bounds
for low-rank matrix approximation.
Some of the methods that we recommend are
algebraically---but not numerically---equivalent
to formulas~\cite[Thm.~4.7, 4.8]{CW09:Numerical-Linear}
that they propose.
Their work focuses on obtaining \emph{a priori} error bounds.
In contrast, we also aim to help users
implement the methods, choose parameters, and obtain
good empirical performance in practice.
Additional details appear throughout our presentation.

There are many subsequent theoretical papers on
sketching algorithms for low-rank matrix approximation,
including~\cite{Woo14:Sketching-Tool,CEM+15:Dimensionality-Reduction,
BWZ16:Optimal-Principal-STOC}.  This line of research exploits a variety
of tricks to obtain algorithms that, theoretically,
attain better asymptotic upper bounds on computational resource usage.
\Cref{sec:other-algs} contains a representative selection
of these methods and their guarantees.

\subsubsection{Added in Press}

When we wrote this paper, the literature did not contain
sketching methods tailored for symmetric or positive-semidefinite
matrix approximation.
A theoretical paper~\cite{CW17:Low-Rank-PSD}
on algorithms for low-rank approximation
of a sparse psd matrix was released after our work appeared.

\subsubsection{Error Bounds}

Almost all previous papers in this area have centered on the
following problem.
Let $\mtx{A} \in \F^{m \times n}$ be an input matrix,
let $r$ be a target rank,  and let $\eps > 0$ be an error tolerance.
Given a randomized linear sketch \cref{eqn:lin-sketch} of the input matrix,
produce a rank-$r$ approximation $\hat{\mtx{A}}_{\mathrm{eps}}$ that satisfies
\begin{equation} \label{eqn:eps-subopt}
\fnormsq{\mtx{A} - \hat{\mtx{A}}_{\mathrm{eps}}}
	\leq (1 + \eps) \cdot \min_{\rank( \mtx{B} ) \leq r} \fnormsq{ \mtx{A} - \mtx{B} }
	\quad\text{with high probability}.
\end{equation}
To achieve~\cref{eqn:eps-subopt} for a general input,
the sketch must have dimension $\Omega(r(m+n)/\eps)$~\cite[Thm.~4.10]{CW09:Numerical-Linear}.
Furthermore, the analogous error bound for the spectral norm cannot be achieved
for all input matrices under the sketching model~\cite[Ch.~6.2]{Woo14:Sketching-Tool}.
Nevertheless, Gu~\cite[Thm.~3.4]{Gu15:Subspace-Iteration}
has observed that \cref{eqn:eps-subopt} implies a
weak error bound in the spectral norm.

Li et al.~\cite[App.]{LLS+17:Algorithm-971}
caution that the guarantee \cref{eqn:eps-subopt} is often vacuous.
For example,
we frequently encounter matrices for which the Frobenius-norm
error of an optimal rank-$r$ approximation is larger than the Frobenius norm
of the approximation itself.  In other settings, it may be necessary
to compute an approximation with very high accuracy.
Either way, $\eps$ must be tiny before the bound \cref{eqn:eps-subopt}
sufficiently constrains the approximation error.
For a general input matrix, to achieve a small value of $\eps$, the sketch size must be exorbitant.
We tackle this issue by providing alternative error estimates (e.g., \cref{thm:err-frob}) that yield big improvements for
most examples.

\subsubsection{Questions...\nopunct}

Our aim is to address questions that arise
when one attempts to use sketching algorithms in practice.
For instance,
how do we implement these methods?  Are they numerically stable?
How should algorithm parameters depend on the input matrix?
What is the right way to preserve structural properties? Which methods produce the best approximations in practice?
How small an approximation error can we actually achieve?
Does existing theoretical analysis predict performance?
Can we obtain error bounds that are more illuminating than \cref{eqn:eps-subopt}?
These questions have often been neglected in the literature.

Our empirical study (\cref{sec:experiments}) highlights
the importance of this inquiry.
Surprisingly, numerical experiments reveal that the
pursuit of theoretical metrics has been counterproductive.
More recent algorithms often perform worse in practice,
even though---in principle---they offer better
performance guarantees.

\section{Background}

In this section, we collect notation and conventions,
as well as some background on random matrices.

\subsection{Notation and Conventions}

We write $\F$ for the scalar field, which is either $\R$ or $\C$.
The letter $\Id$ signifies the identity matrix; its dimensions are determined by context.
The star ${}^*$ refers to the (conjugate) transpose operation on vectors and matrices.
The dagger ${}^\dagger$ is the Moore--Penrose pseudoinverse.
The symbol $\fnorm{\cdot}$ denotes the Frobenius norm.

The expression ``$\mtx{M}$ has rank $r$'' and its variants
mean that the rank of $\mtx{M}$ does not exceed $r$.
The symbol $\lowrank{\mtx{M}}{r}$ represents an optimal rank-$r$
approximation of $\mtx{M}$ with respect to Frobenius norm;
this approximation need not be unique~\cite[Sec.~6]{Hig89:Matrix-Nearness}.

It is valuable to introduce notation for the error incurred by a
best rank-$r$ approximation in the Frobenius norm.
For each natural number $j$, we define the \emph{$j$th tail energy}
\begin{equation} \label{eqn:tail-energy}
\tau_{j}^2(\mtx{A}) := \min_{\rank(\mtx{B}) < j} \fnormsq{ \mtx{A} - \mtx{B} }
	= \sum\nolimits_{i \geq j} \sigma_i^2(\mtx{A}).
\end{equation}
We have written $\sigma_i(\mtx{A})$ for the $i$th largest singular value of $\mtx{A}$.
The equality follows from the Eckart--Young Theorem;
for example, see~\cite[Sec.~6]{Hig89:Matrix-Nearness}.

The symbol $\Expect$ denotes expectation with respect to all random variables.
For a given random variable $Z$, we write $\Expect_{Z}$ to denote expectation
with respect to the randomness in $Z$ only.  Nonlinear functions bind before
the expectation.

In the description of algorithms in the text, we primarily use standard
mathematical notation.  In the pseudocode, we rely on some
\textsc{Matlab R2017a} functions in an effort to make the presentation
more concise.

We use the computer science interpretation of
$\Theta(\cdot)$ to refer to the class of functions
whose growth is bounded above and below up to a constant.

\subsection{Standard Normal Matrices}

Let us define an ensemble of random matrices that plays a
central role in this work.

\begin{definition}[Standard Normal Matrix] \label{def:std-normal-mtx}
A matrix $\mtx{G} \in \R^{m \times n}$ has the real standard normal distribution
if the entries form an independent family of standard normal random variables
(i.e., Gaussian with mean zero and variance one).

A matrix $\mtx{G} \in \C^{m \times n}$ has the complex standard normal distribution
if it has the form $\mtx{G} = \mtx{G}_1 + \mathrm{i} \mtx{G}_2$
where $\mtx{G}_1$ and $\mtx{G}_2$ are independent, real standard normal matrices.

Standard normal matrices are also known as Gaussian matrices.
\end{definition}

We introduce numbers $\alpha$ and $\beta$
that reflect the field over which the random matrix is defined:
\begin{equation} \label{eqn:alpha-parameter}
\alpha := \alpha(\F) := \begin{cases} 1, & \F = \R \\ 0, & \F = \C \end{cases}
\quad\text{and}\quad
\beta := \beta(\F) := \begin{cases} 1, & \F = \R \\ 2, & \F = \C \end{cases}.
\end{equation}
This notation allows us to treat the real and complex case simultaneously.
The number $\beta$ is a standard parameter in random matrix theory.

Last, we introduce notation to help make our theorem statements more succinct:
\begin{equation} \label{eqn:f-intro}
f(s, t) := \frac{s}{t - s - \alpha}
\quad\text{for integers that satisfy $t > s + \alpha > \alpha$.}
\end{equation}
Observe that the function $f(s, \cdot)$ is decreasing, with range $(0, s]$.

\section{Sketching the Input Matrix}
\label{sec:mult-sketching}

First, we discuss how to collect enough data about an input matrix
to compute a low-rank approximation.  We summarize the matrix by multiplying it on
the right and the left by random test matrices.
The dimension and distribution of these random test matrices
together determine the potential accuracy of the approximation.

\subsection{The Input Matrix}

Let $\mtx{A} \in \F^{m \times n}$ be a matrix
that we wish to approximate.
Our algorithms work regardless of the relative dimensions
of $\mtx{A}$, but there may sometimes be small benefits
if we apply them to $\mtx{A}^*$ instead.

\subsection{The Target Rank}

Let $r$ be a target rank parameter with $1 \leq r \leq \min\{m,n\}$.
We aim to construct a low-rank approximation of $\mtx{A}$ whose error is
close to the optimal rank-$r$ error. We explicitly allow approximations with rank somewhat
larger than $r$ because they may be significantly more accurate.

Under the sketching model~\cref{eqn:lin-sketch},
the practitioner must use prior knowledge
about the input matrix $\mtx{A}$ to determine a target rank $r$
that will result in satisfactory error guarantees.
This decision is outside the scope of our work.

\subsection{Parameters for the Sketch}

The sketch consists of two parts: a summary of the range of $\mtx{A}$
and a summary of the co-range.
The parameter $k$ controls the size of the range sketch,
and the parameter $\ell$ controls the size of the co-range sketch.
They should satisfy the conditions
\begin{equation} \label{eqn:param-assumption}
r \leq k \leq \ell
\quad\text{and}\quad
k \leq n
\quad\text{and}\quad
\ell \leq m.
\end{equation}
We often choose $k \approx r$ and $\ell \approx k$.
See \cref{eqn:my-param-choice,sec:best-parameters} below.

The parameters $k$ and $\ell$ do not play symmetrical roles.
We need $\ell \geq k$ to ensure that a certain $\ell \times k$ matrix has full column rank.
Larger values of both $k$ and $\ell$ result in better approximations
at the cost of more storage and arithmetic.  These tradeoffs are quantified in the sequel.

\subsection{The Test Matrices}

To form the sketch of the input matrix,
we draw and fix two (random) test matrices:
\begin{equation} \label{eqn:test-matrices}
\mtx{\Omega} \in \F^{n \times k}
\quad\text{and}\quad
\mtx{\Psi} \in \F^{\ell \times m}.
\end{equation}
This paper contains a detailed analysis of the case where the
test matrices are statistically independent and follow the standard normal distribution.
\Cref{sec:distributions} describes other potential distributions for the test matrices.
We always state when we are making distributional assumptions on the test matrices.

\subsection{The Sketch}
\label{sec:sketch}

The sketch of $\mtx{A} \in \F^{m \times n}$
consists of two matrices:
\begin{equation} \label{eqn:sketches}
\mtx{Y} := \mtx{A} \mtx{\Omega} \in \F^{m \times k}
\quad\text{and}\quad
\mtx{W} := \mtx{\Psi} \mtx{A} \in \F^{\ell \times n}.
\end{equation}
The matrix $\mtx{Y}$ collects information about the action
of $\mtx{A}$, while the matrix $\mtx{W}$ collects information
about the action of $\mtx{A}^*$.  Both parts are necessary.

\begin{remark}[Prior Work]
The matrix sketching
algorithms that appear in~\cite[Sec.~5.2]{WLRT08:Fast-Randomized}
and~\cite[Thm.~4.9]{CW09:Numerical-Linear}
and~\cite[Sec.~5.5]{HMT11:Finding-Structure}
and~\cite[Thm.~4.3]{Woo14:Sketching-Tool}
all involve a sketch of the form~\cref{eqn:sketches}.
In contrast, the most recent approaches (\cite[Sec.~6.1.2]{BWZ16:Optimal-Principal-STOC}
and~\cite[Sec.~3]{Upa16:Fast-Space-Optimal}) use more complicated
sketches; see \cref{sec:optimal-alg}.
\end{remark}

\subsection{The Sketch as an Abstract Data Type}

We present the sketch as an abstract data type
using ideas from object-oriented programming.
\textsc{Sketch} is an object that contains
information about a specific matrix $\mtx{A}$.
The test matrices $(\mtx{\Omega}, \mtx{\Psi})$
and the sketch matrices $(\mtx{Y}, \mtx{W})$
are private variables that are only accessible to the
\textsc{Sketch} methods.
A user interacts with the \textsc{Sketch} object by initializing it
with a specific matrix and by applying linear
updates.  The user can query the \textsc{Sketch} object
to obtain an approximation of the matrix $\mtx{A}$
with specific properties.  The individual algorithms
described in this paper are all methods that
belong to the \textsc{Sketch} object.

\subsection{Initializing the Sketch and its Costs}

See \cref{alg:sketch} for pseudocode that implements
the sketching procedure~\cref{eqn:test-matrices} and~\cref{eqn:sketches}
with either standard normal test matrices (default) or
random orthonormal test matrices (optional steps).
Note that the orthogonalization step requires additional
arithmetic and communication.

The storage cost
for the sketch $(\mtx{Y}, \mtx{W})$
is $mk + \ell n$ floating-point numbers in the field $\F$.
The storage cost for two standard normal test matrices is
$nk + \ell m$ floating point numbers in $\F$.  Some other types of
test matrices $(\mtx{\Omega}, \mtx{\Psi})$ have lower storage costs,
but the sketch $(\mtx{Y}, \mtx{W})$ remains the same size.

For standard normal test matrices,
the arithmetic cost of forming the sketch~\cref{eqn:sketches}
is $\Theta((k+\ell)mn)$ flops when $\mtx{A}$ is dense.
If $\mtx{A}$ is sparse, the cost is proportional to the number
$\texttt{nnz}(\mtx{A})$ of nonzero entries:
$\Theta((k + \ell) \, \texttt{nnz}(\mtx{A}))$ flops.
Other types of test matrices sometimes yield lower arithmetic costs.

\begin{algorithm}[tb]
  \caption{\textsl{Sketch for Low-Rank Approximation.}  Implements~\cref{eqn:test-matrices} and~\cref{eqn:sketches}.
  \label{alg:sketch}}
  \begin{algorithmic}[1]
    \Require{Input matrix $\mtx{A} \in \F^{m \times n}$;
    sketch size parameters $k \leq \ell$}
    \Ensure{Constructs test matrices $\mtx{\Omega} \in \F^{n \times k}$ and $\mtx{\Psi} \in \F^{\ell \times m}$,
    range sketch $\mtx{Y} = \mtx{A\Omega} \in \F^{m \times k}$, and co-range sketch $\mtx{W} = \mtx{\Psi A} \in \F^{\ell \times n}$ as private variables}
\vspace{0.5pc}

	\State \textbf{private:} $\mtx{\Omega}, \mtx{\Psi}, \mtx{Y}, \mtx{W}$
		\Comment{Internal variables for \textsc{Sketch} object}
	\Statex
		\Comment{Accessible to all \textsc{Sketch} methods}
	\Statex
	\Function{Sketch}{$\mtx{A}; k, \ell$}
		\Comment{Constructor}
    \If{$\F = \R$}
    \State	$\mtx{\Omega} \gets \texttt{randn}(n, k)$
		    \State	$\mtx{\Psi} \gets \texttt{randn}(\ell, m)$
			\EndIf
	\If{$\F = \C$}
    \State	$\mtx{\Omega} \gets \texttt{randn}(n, k) + {\rm i} \, \texttt{randn}(n, k)$
		    \State	$\mtx{\Psi} \gets \texttt{randn}(\ell, m) + {\rm i} \, \texttt{randn}(\ell, m)$
			\EndIf
	\State	$\mtx{\Omega} \gets \texttt{orth}(\mtx{\Omega})$
		\Comment{(optional) Improve numerical stability}
	\State	$\mtx{\Psi}^* \gets \texttt{orth}(\mtx{\Psi}^*)$
		\Comment{(optional) Improve numerical stability}
	\State	$\mtx{Y} \gets \mtx{A\Omega}$
			\State	$\mtx{W} \gets \mtx{\Psi A}$
			\EndFunction

	\vspace{0.25pc}

\end{algorithmic}
\end{algorithm}

\begin{algorithm}[tb]
  \caption{\textsl{Linear Update to Sketch.}  Implements~\cref{eqn:linear-update}.
  \label{alg:sketch-update}}
  \begin{algorithmic}[1]
    \Require{Update matrix $\mtx{H} \in \F^{m \times n}$; scalars $\theta, \eta \in \F$}
    \Ensure{Modifies sketch $(\mtx{Y}, \mtx{W})$
    to reflect linear update $\mtx{A} \gets \theta \mtx{A} + \eta \mtx{H}$}
\vspace{0.5pc}

	\Function{Sketch.LinearUpdate}{$\mtx{H}; \theta, \eta$}
	\State	$\mtx{Y} \gets \theta \mtx{Y} + \eta \mtx{H \Omega}$
		\Comment{Linear update to range sketch}
	\State	$\mtx{W} \gets \theta \mtx{W} + \eta \mtx{\Psi H}$
		\Comment{Linear update to co-range sketch}
	\EndFunction

	\vspace{0.25pc}

\end{algorithmic}
\end{algorithm}

\subsection{Processing Linear Updates}
\label{sec:updates}

The sketching model~\cref{eqn:sketches} supports a linear update
that is more general than \cref{eqn:additive-update}.
Suppose the input matrix $\mtx{A}$ is modified as
$$
\mtx{A} \gets \theta \mtx{A} + \eta \mtx{H}
\quad\text{where $\theta, \eta \in \F$.}
$$
Then we update the sketch~\cref{eqn:sketches} via the rule
\begin{equation} \label{eqn:linear-update}
\mtx{Y} \gets \theta \mtx{Y} + \eta \mtx{H}\mtx{\Omega}
\quad\text{and}\quad
\mtx{W} \gets \theta \mtx{W} + \eta  \mtx{\Psi} \mtx{H}.
\end{equation}
The precise cost of the computation depends on the structure of $\mtx{H}$.
See \cref{alg:sketch-update} for pseudocode.
This type of update is crucial for certain applications~\cite{YUTC16:Sketchy-Decisions}.

\subsection{Choosing the Distribution of the Test Matrices}
\label{sec:distributions}

Our analysis is specialized to the case where the test matrices
$\mtx{\Omega}$ and $\mtx{\Psi}$ are standard normal so that
we can obtain highly informative error bounds.

But there are potential benefits from implementing the sketch using test matrices
drawn from another distribution. The choice of distribution leads to some tradeoffs in
the range of permissible parameters; the costs of randomness, arithmetic, and communication to generate
the test matrices; the storage costs for the test matrices and the sketch;
the arithmetic costs for sketching and updates;
the numerical stability of matrix approximation algorithms;
and the quality of \emph{a priori} error bounds.

Let us list some of the contending distributions along with background references.
We have ranked these in decreasing order of reliability.

\vspace{0.5pc}

\begin{itemize} \setlength{\itemsep}{0.5pc}
\item	\textbf{Orthonormal.}  The optional steps in \cref{alg:sketch} generate
matrices $\mtx{\Omega}$ and $\mtx{\Psi}^*$ with orthonormal columns that span uniformly
random subspaces of dimension $k$ and $\ell$.  When $k$ and $\ell$ are very large,
these matrices result in smaller errors and better numerical stability than
Gaussians~\cite{DDH07:Fast-Linear,HMT11:Finding-Structure}.

\item	\textbf{Gaussian.}  Following~\cite{MRT11:Randomized-Algorithm,HMT11:Finding-Structure},
this paper focuses on test matrices with the standard normal distribution.
Benefits include excellent practical performance and accurate \emph{a priori} error bounds.

\item	\textbf{Rademacher.}  These test matrices have independent Rademacher\footnote{A Rademacher random variable
takes the values $\pm 1$ with equal probability.}
entries.
Their behavior is similar to Gaussian test matrices, but there
are minor improvements in the cost of storage and arithmetic, as well as
the amount of randomness required.  For example, see~\cite{CW09:Numerical-Linear}.

\item	\textbf{Subsampled Randomized Fourier Transform (SRFT).}
These test matrices take the form
\begin{equation} \label{eqn:srft}
\mtx{\Omega} = \mtx{D}_1 \mtx{F}_1 \mtx{P}_1
\quad\text{and}\quad
\mtx{\Psi} = \mtx{P}_2 \mtx{F}_2^* \mtx{D}_2
\end{equation}
where $\mtx{D}_1 \in \F^{n \times n}$ and $\mtx{D}_2 \in \F^{m \times m}$
are diagonal matrices with independent Rademacher entries; $\mtx{F}_1 \in \F^{n \times n}$
and $\mtx{F}_2 \in \F^{m \times m}$ are discrete cosine transform ($\F = \R$)
or discrete Fourier transform $(\F = \C)$ matrices;
and $\mtx{P}_1 \in \F^{n \times k}$ and $\mtx{P}_2 \in \F^{\ell \times m}$
are restrictions onto $k$ and $\ell$ coordinates, chosen uniformly at random.
These matrices work well in practice; they require a modest amount of storage;
and they support fast arithmetic.
See~\cite{AC06:Approximate-Nearest,WLRT08:Fast-Randomized,AC09:Fast-Johnson-Lindenstrauss,HMT11:Finding-Structure,Tro11:Improved-Analysis,BG13:Improved-Matrix,CNW16:Optimal-Approximate}.

\item	\textbf{Ultra-Sparse Rademacher.}  Let $s$ be a sparsity parameter.
In each row of $\mtx{\Omega}$ and column of $\mtx{\Psi}$,
we place independent Rademacher random variables in $s$ uniformly random locations;
the remaining entries of the test matrices are zero.
These matrices help control storage, arithmetic,
and randomness costs.  On the other hand, they are somewhat less reliable. For more details, see~\cite{CW13:Low-Rank-Approximation,NN13:OSNAP-Faster,
MM13:Low-Distortion-Subspace,NN14:Lower-Bounds,Woo14:Sketching-Tool,BDN15:Toward-Unified,Coh16:Nearly-Tight}.
\end{itemize}

\vspace{0.5pc}

\noindent
Except for ultra-sparse Rademacher matrices, these distributions often behave
quite like a Gaussian distribution in practice~\cite[Sec.~7.4]{HMT11:Finding-Structure}.
An exhaustive comparison of distributions for the test matrices
is outside the scope of this paper; see \cite{Lib09:Accelerated-Dense}.

\section{Low-Rank Approximation from the Sketch} \label{sec:low-rank-recon}

Suppose that we have acquired a sketch $(\mtx{Y}, \mtx{W})$
of the input matrix $\mtx{A}$, as in~\cref{eqn:test-matrices} and~\cref{eqn:sketches}.
This section presents the most basic algorithm for computing a low-rank approximation
of $\mtx{A}$ from the data in the sketch.  This simple approach
is similar to earlier proposals;
see~\cite[Sec.~5.2]{WLRT08:Fast-Randomized},
\cite[Thm.~4.7]{CW09:Numerical-Linear},
\cite[Sec.~5.5]{HMT11:Finding-Structure},
\cite[Thm.~4.3, display 1]{Woo14:Sketching-Tool}.

We have obtained the first accurate error bound for this method.
Our result shows how the spectrum of the input matrix affects
the approximation quality.  This analysis allows us to
make parameter recommendations for specific input matrices.

In \cref{sec:structured}, we explain how to refine this algorithm to obtain approximations
with additional structure.
In \cref{sec:fixed-rank}, we describe modifications
of the procedures that produce approximations
with fixed rank and additional structure.
Throughout, we maintain the notation of \cref{sec:mult-sketching}.

\subsection{The Main Algorithm}

Our goal is to produce a low-rank approximation of the input matrix $\mtx{A}$
using only the knowledge of the test matrices $(\mtx{\Omega}, \mtx{\Psi})$
and the sketch $(\mtx{Y}, \mtx{W})$.  Here is the basic method.

The first step in the procedure is to compute an orthobasis for the range of $\mtx{Y}$ by means of an orthogonal--triangular factorization:
\begin{equation} \label{eqn:def-Q}
\mtx{Y} =: \mtx{QR}
\quad\text{where}\quad
\mtx{Q} \in \F^{m \times k}.
\end{equation}
The matrix $\mtx{Q}$ has orthonormal columns; we discard the triangular matrix $\mtx{R}$.
The second step uses the co-range sketch $\mtx{W}$ to form the matrix
\begin{equation} \label{eqn:def-X}
\mtx{X} := (\mtx{\Psi} \mtx{Q})^{\dagger} \mtx{W} \in \F^{k \times n}.
\end{equation}
The random matrix $\mtx{\Psi} \mtx{Q} \in \F^{\ell \times k}$
is very well-conditioned when $\ell \gg k$, so we can perform
this computation accurately by solving a least-squares problem.
We report the rank-$k$ approximation
\begin{equation} \label{eqn:Ahat}
\hat{\mtx{A}} := \mtx{QX} \in \F^{m \times n}
\quad\text{where}\quad
\mtx{Q} \in \F^{m \times k}
\quad\text{and}\quad
\mtx{X} \in \F^{k \times n}.
\end{equation}
The factors $\mtx{Q}$ and $\mtx{X}$ are defined in~\cref{eqn:def-Q,eqn:def-X}.

\begin{remark}[Prior Work]
The approximation $\hat{\mtx{A}}$ is algebraically, but not numerically,
equivalent with the approximation that appears in
Clarkson \& Woodruff~\cite[Thm.~4.7]{CW09:Numerical-Linear};
see also~\cite[Thm.~4.3, display 1]{Woo14:Sketching-Tool}.
Our formulation improves on theirs by avoiding
a badly conditioned least-squares problem.
\end{remark}

\subsection{Intuition} \label{sec:intuition}

To motivate the algorithm, we recall a familiar
heuristic~\cite[Sec.~1]{HMT11:Finding-Structure}
from randomized linear algebra, which states that
\begin{equation} \label{eqn:A-QQA}
\mtx{A} \approx \mtx{QQ}^* \mtx{A}.
\end{equation}
Although we would like to form the rank-$k$ approximation $\mtx{Q} (\mtx{Q}^* \mtx{A})$,
we cannot compute the factor $\mtx{Q}^* \mtx{A}$ without revisiting the input matrix $\mtx{A}$.
Instead, we exploit the information in the co-range sketch $\mtx{W} = \mtx{\Psi}\mtx{A}$.
Notice that
$$
\mtx{W} = \mtx{\Psi}(\mtx{Q} \mtx{Q}^*\mtx{A}) + \mtx{\Psi}(\mtx{A} - \mtx{QQ}^* \mtx{A})
	\approx (\mtx{\Psi} \mtx{Q})(\mtx{Q}^*\mtx{A}).
$$
The heuristic~\cref{eqn:A-QQA} justifies dropping the second term.
Multiplying on the left by the pseudoinverse $(\mtx{\Psi} \mtx{Q})^{\dagger}$,
we arrive at the relation
$$
\mtx{X} = (\mtx{\Psi} \mtx{Q})^{\dagger} \mtx{W} \approx \mtx{Q}^* \mtx{A}.
$$
These considerations suggest that
$$
\hat{\mtx{A}} = \mtx{QX} \approx \mtx{QQ}^*\mtx{A} \approx \mtx{A}.
$$
One of our contributions is to give substance to these nebulae.

\begin{remark}[Prior Work]
This intuition is inspired by the discussion in~\cite[Sec.~5.5]{HMT11:Finding-Structure},
and it allows us to obtain sharp error bounds.
Our approach is quite different from that of~\cite[Thm.~4.7]{CW09:Numerical-Linear}
or \cite[Thm.~4.3]{Woo14:Sketching-Tool}. \end{remark}

\subsection{Algorithm and Costs}

\Cref{alg:simple-low-rank-recon,alg:detailed-low-rank-recon}
give pseudocode for computing the approximation~\cref{eqn:Ahat}.
The first presentation uses \textsc{Matlab} functions to abbreviate some of the steps,
while the second includes more implementation details.
Note that the use of the $\texttt{orth}$ command may result in an approximation with
rank $q$ for some $q \leq k$, but the quality of the approximation does not change.

\begin{algorithm}[tb]
  \caption{\textsl{Simplest Low-Rank Approximation.}
  Implements~\cref{eqn:Ahat}.
  \label{alg:simple-low-rank-recon}}
  \begin{algorithmic}[1]
    \Ensure{For some $q \leq k$, returns factors $\mtx{Q} \in \F^{m \times q}$ with orthonormal columns and $\mtx{X} \in \F^{q \times n}$ that form a rank-$q$ approximation $\hat{\mtx{A}}_{\rm out} = \mtx{QX}$ of the sketched matrix}
\vspace{0.5pc}

	\Function{Sketch.SimpleLowRankApprox}{\,}
    \State	$\mtx{Q} \gets \texttt{orth}( \mtx{Y} )$
		\Comment{Orthobasis for range of $\mtx{Y}$}
	\State	$\mtx{X} \gets (\mtx{\Psi} \mtx{Q}) \backslash \mtx{W}$
		\Comment{Multiply $(\mtx{\Psi}\mtx{Q})^{\dagger}$ on left side of $\mtx{W}$}
	\State \Return{$(\mtx{Q}, \mtx{X})$}
			\EndFunction

	\vspace{0.25pc}

\end{algorithmic}
\end{algorithm}

\begin{algorithm}[tb]
  \caption{\textsl{Low-Rank Approximation.}  Implements~\cref{eqn:Ahat}.
  \label{alg:detailed-low-rank-recon}}
  \begin{algorithmic}[1]
    \Ensure{Returns factors $\mtx{Q} \in \F^{m \times k}$ with orthonormal columns and $\mtx{X} \in \F^{k \times n}$ that form a rank-$k$ approximation $\hat{\mtx{A}}_{\rm out} = \mtx{QX}$ of the sketched matrix}
\vspace{0.5pc}

	\Function{Sketch.LowRankApprox}{\,}
    \State	$(\mtx{Q}, \sim) \gets \texttt{qr}( \mtx{Y}, \texttt{0} )$
    	\label{line:dlr-qr1}
		\Comment{Orthobasis for range of $\mtx{Y}$}
	\State	$(\mtx{U}, \mtx{T}) \gets \texttt{qr}(\mtx{\Psi} \mtx{Q}, \texttt{0})$
		\label{line:dlr-qr2}
		\Comment{Orthogonal--triangular factorization} 	\State	$\mtx{X} \gets \mtx{T}^{\dagger} (\mtx{U}^* \mtx{W})$
		\label{line:dlr-ls}
		\Comment{Apply inverse by back-substitution}
	\State \Return{$(\mtx{Q}, \mtx{X})$}
			\EndFunction

	\vspace{0.25pc}

\end{algorithmic}
\end{algorithm}

Let us summarize the costs of the
approximation procedure~\cref{eqn:def-Q,eqn:def-X,eqn:Ahat},
as implemented in \cref{alg:detailed-low-rank-recon}.
The algorithm has working storage of $\mathcal{O}(k(m + n))$ floating point numbers.
The arithmetic cost is $\Theta(k\ell (m + n))$ flops, which is dominated
by the matrix--matrix multiplications. The orthogonalization step and the back-substitution require $\Theta(k^2 (m+n))$ flops, which is almost as significant.

\subsection{A Bound for the Frobenius-Norm Error} 
We have established a very accurate error bound for the
approximation~\cref{eqn:Ahat}
that is implemented in \cref{alg:simple-low-rank-recon,alg:detailed-low-rank-recon}.
This analysis is one of the key contributions of this paper.

\begin{theorem}[Low-Rank Approximation: Frobenius Error] \label{thm:err-frob}
Assume that the sketch size parameters satisfy $\ell > k + \alpha$.
Draw random test matrices $\mtx{\Omega} \in \F^{n \times k}$
and $\mtx{\Psi} \in \F^{\ell \times m}$
independently from the standard normal distribution.
Then the rank-$k$ approximation $\hat{\mtx{A}}$ obtained from formula~\cref{eqn:Ahat}
satisfies
\begin{equation} \label{eqn:err-frob}
\begin{aligned}
\Expect \fnormsq{ \mtx{A} - \hat{\mtx{A}} }
	&\leq (1 + f(k,\ell)) \cdot \min_{\varrho < k - \alpha} (1 + f(\varrho,k)) \cdot \tau_{\varrho+1}^2(\mtx{A}) \\
	&= \frac{k}{\ell - k - \alpha} \cdot \min_{\varrho < k - \alpha} \frac{k}{k - \varrho - \alpha}
	\cdot \tau_{\varrho+1}^2(\mtx{A}).
\end{aligned}
\end{equation}
The index $\varrho$ ranges over natural numbers.
The quantity $\alpha(\R) := 1$ and $\alpha(\C) := 0$;
the function $f(s, t) := s/(t-s-\alpha)$;
the tail energy $\tau_j^2$ is defined in~\cref{eqn:tail-energy}.
\end{theorem}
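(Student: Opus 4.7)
The plan is to decompose the approximation error into a range-approximation piece and a least-squares piece, then to handle each separately. Explicitly, write
\begin{equation*}
\mtx{A} - \hat{\mtx{A}} = (\mtx{A} - \mtx{QQ}^*\mtx{A}) + \mtx{Q}(\mtx{Q}^*\mtx{A} - \mtx{X}).
\end{equation*}
The first summand lies in $\range(\mtx{Q})^\perp$ while the second lies in $\range(\mtx{Q})$, so Pythagoras gives
\begin{equation*}
\fnormsq{\mtx{A} - \hat{\mtx{A}}} = \fnormsq{\mtx{A} - \mtx{QQ}^*\mtx{A}} + \fnormsq{\mtx{Q}^*\mtx{A} - \mtx{X}}.
\end{equation*}
The first term is exactly the error of the HMT-style randomized range finder. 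Under the assumption $\ell > k + \alpha \geq k$ we also have $k \geq k - \alpha > 0$ and in the Gaussian setting $\mtx{\Psi Q}$ has full column rank almost surely, which will be needed to handle the second term.

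For the second term, substitute $\mtx{W} = \mtx{\Psi A}$ and split $\mtx{A} = \mtx{QQ}^*\mtx{A} + \mtx{E}$ where $\mtx{E} := \mtx{A} - \mtx{QQ}^*\mtx{A}$. Because $(\mtx{\Psi Q})^\dagger(\mtx{\Psi Q}) = \Id_k$ on the event of full column rank,
\begin{equation*}
\mtx{X} = (\mtx{\Psi Q})^\dagger \mtx{\Psi}(\mtx{QQ}^*\mtx{A} + \mtx{E}) = \mtx{Q}^*\mtx{A} + (\mtx{\Psi Q})^\dagger \mtx{\Psi E},
\end{equation*}
so $\mtx{Q}^*\mtx{A} - \mtx{X} = -(\mtx{\Psi Q})^\dagger \mtx{\Psi E}$. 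Let $\mtx{Q}_\perp$ complete $\mtx{Q}$ to an orthonormal basis of $\F^m$. The identity $\mtx{Q}^*\mtx{E} = 0$ gives $\mtx{E} = \mtx{Q}_\perp \mtx{Q}_\perp^* \mtx{E}$, and we set $\mtx{\Psi}_1 := \mtx{\Psi Q}$ and $\mtx{\Psi}_2 := \mtx{\Psi Q}_\perp$. Conditional on $\mtx{\Omega}$ (and hence on $\mtx{Q}$), rotational invariance of the Gaussian distribution makes $\mtx{\Psi}_1$ and $\mtx{\Psi}_2$ independent standard normal matrices of sizes $\ell\times k$ and $\ell\times(m-k)$.

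Now condition on $\mtx{\Omega}$ and on $\mtx{\Psi}_1$, so that only $\mtx{\Psi}_2$ is random. Then $\mtx{\Psi E} = \mtx{\Psi}_2 (\mtx{Q}_\perp^* \mtx{E})$, and the Gaussian moment identity $\Expect_{\mtx{\Psi}_2}\fnormsq{\mtx{M}\mtx{\Psi}_2\mtx{N}} = \fnormsq{\mtx{M}}\,\fnormsq{\mtx{N}}$ (applied in both the real and complex cases with matching normalization) yields
\begin{equation*}
\Expect_{\mtx{\Psi}_2} \fnormsq{(\mtx{\Psi}_1)^\dagger \mtx{\Psi}_2 \mtx{Q}_\perp^*\mtx{E}} = \fnormsq{\mtx{\Psi}_1^\dagger} \cdot \fnormsq{\mtx{Q}_\perp^*\mtx{E}} = \fnormsq{\mtx{\Psi}_1^\dagger} \cdot \fnormsq{\mtx{E}}.
\end{equation*}
Taking expectation over $\mtx{\Psi}_1$ and invoking the standard formula $\Expect \fnormsq{\mtx{G}^\dagger} = f(k,\ell)$ for a standard normal $\mtx{G} \in \F^{\ell \times k}$ when $\ell > k+\alpha$, then using independence of $\mtx{\Omega}$ from $\mtx{\Psi}$,
\begin{equation*}
\Expect \fnormsq{\mtx{Q}^*\mtx{A} - \mtx{X}} \leq f(k,\ell) \cdot \Expect \fnormsq{\mtx{A} - \mtx{QQ}^*\mtx{A}}.
\end{equation*}
Combining with Pythagoras and then invoking the Halko--Martinsson--Tropp Frobenius-norm randomized range-finder bound
\begin{equation*}
\Expect \fnormsq{\mtx{A} - \mtx{QQ}^*\mtx{A}} \leq \min_{\varrho < k - \alpha}(1 + f(\varrho,k))\,\tau_{\varrho+1}^2(\mtx{A})
\end{equation*}
produces the stated inequality.

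The cleanest but trickiest step will be the rotational-invariance decoupling of $\mtx{\Psi Q}$ from $\mtx{\Psi Q}_\perp$ and the invocation of the right Gaussian-moment identity uniformly in the real and complex cases, since the $\alpha$ and $\beta$ constants have to line up exactly so that $\Expect \fnormsq{\mtx{G}^\dagger}$ equals $f(k,\ell)$ with no stray $\beta$-factor. I would check this by writing the complex Gaussian as an independent pair of real Gaussians and verifying that the factor of $\beta = 2$ coming from $\Expect_{\mtx{\Psi}_2}\fnormsq{\cdot}$ cancels against the corresponding factor in $\Expect \fnormsq{\mtx{G}^\dagger}$. The range-finder bound at the end is classical and needs only to be quoted; everything else is algebra plus the conditional Gaussian calculation above.
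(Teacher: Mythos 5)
Your proposal is correct and follows essentially the same route as the paper's proof: the Pythagorean decomposition, the identity $\mtx{X} - \mtx{Q}^*\mtx{A} = (\mtx{\Psi Q})^\dagger \mtx{\Psi}(\mtx{A} - \mtx{QQ}^*\mtx{A})$, the conditional Gaussian moment computation in which the $\beta$ factors from the two moment identities cancel exactly as you anticipate, and the final appeal to the Halko--Martinsson--Tropp range-finder bound. No gaps.
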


\noindent
The proof of \cref{thm:err-frob} appears below in \cref{sec:proof-low-rank-recon}.

To begin to understand \cref{thm:err-frob},
it is helpful to consider a specific parameter choice.
Let $r$ be the target rank of the approximation, and select
\begin{equation} \label{eqn:my-param-choice}
k = 2r + \alpha
\quad\text{and}\quad
\ell = 2k + \alpha.
\end{equation}
For these sketch size parameters, with $\varrho = r$, \cref{thm:err-frob} implies that
$$
\Expect \fnormsq{ \mtx{A} - \hat{\mtx{A}} }
	\leq 4 \cdot \tau_{r+1}^2(\mtx{A}).
$$
In other words, for $k \approx 2r$, we can construct a rank-$k$ approximation of $\mtx{A}$
that has almost the same quality as a best rank-$r$ approximation.
This parameter choice balances the sketch size against the quality of approximation.

But the true meaning of \cref{thm:err-frob} lies deeper.
The minimum in \cref{eqn:err-frob} reveals that the approximation \cref{eqn:Ahat} automatically
takes advantage of decay in the tail energy.  This fundamental fact
explains the strong empirical performance of \cref{eqn:Ahat}
and other approximations derived from it.
Our analysis is the first to identify this feature.

\begin{remark}[Prior Work]
The analysis in~\cite[Thm.~3.7]{CW09:Numerical-Linear} shows
that $\hat{\mtx{A}}$ achieves a bound of the form \cref{eqn:eps-subopt}
when the sketch size parameters scale as $k = \Theta(r/\eps)$
and $\ell = \Theta(k/\eps)$.  A precise variant of
the same statement follows from \cref{thm:err-frob}.
\end{remark}

\begin{remark}[High-Probability Error Bound]
The expectation bound presented in \cref{thm:err-frob}
also describes the typical behavior of the approximation~\cref{eqn:Ahat} because
of measure concentration effects.  It is possible to develop a high-probability
bound using the methods from~\cite[Sec.~10.3]{HMT11:Finding-Structure}.
\end{remark}

\begin{remark}[Spectral-Norm Error Bound]
It is also possible to develop bounds for the spectral-norm
error incurred by the approximation \cref{eqn:Ahat}.  These
results depend on the decay of both the singular values and the
tail energies.  See~\cite[Thm.~4.2]{TYUC17:Randomized-Single-View-TR}.
\end{remark}

\subsection{Theoretical Guidance on the Sketch Size}
\label{sec:best-parameters}

\Cref{thm:err-frob} is precise enough to predict the performance
of the approximation \cref{eqn:Ahat} for many types of input matrices.
As a consequence, we can offer concrete guidance on the best sketch size parameters
$(k, \ell)$ for various applications.

Observe that the storage cost of the sketch~\cref{eqn:sketches}
is directly proportional to the sum $T := k + \ell$ of the sketch size parameters $k$ and $\ell$.
In this section, we investigate the best way to apportion $k$ and $\ell$
when we fix the target rank $r$ and the total sketch size $T$.
Throughout this discussion, we assume that $T \geq 2r + 3\alpha + 3$.
See \cref{tab:theory-params} for a summary of these rules;
see \cref{sec:theory-params} for an empirical evaluation.

\begin{table}[t]
\begin{center}
\caption{\textbf{Theoretical Sketch Size Parameters.}  This table summarizes
how to choose the sketch size parameters $(k, \ell)$ to exploit
prior information about the spectrum of the input matrix $\mtx{A}$.}
\label{tab:theory-params}
\begin{tabular}{|c|c|c|}
\hline
Problem Regime & Notation & Equation \\
\hline\hline
General purpose & $(k_{\natural}, \ell_{\natural})$ & \cref{eqn:decay-params} \\
\hline
Flat spectrum & $(k_{\flat}, \ell_{\flat})$ & \cref{eqn:flat-params,eqn:flat-params-R} \\
Decaying spectrum & $(k_{\natural}, \ell_{\natural})$ & \cref{eqn:decay-params} \\
Rapidly decaying spectrum & $(k_{\sharp}, \ell_{\sharp})$ & \cref{eqn:fast-decay-params} \\
\hline
\end{tabular}
\end{center}
\end{table}

\subsubsection{Flat Spectrum}

First, suppose that the singular values $\sigma_j(\mtx{A})$ of the input matrix $\mtx{A}$
do not decay significantly for $j > r$.  This situation occurs, for example,
when the input is a rank-$r$ matrix plus white noise.

In this setting, the minimum in \cref{eqn:err-frob} is likely to occur when $\varrho \approx r$.
It is natural to set $\varrho = r$ and to minimize the resulting bound
subject to the constraints $k + \ell = T$ and $k > r + \alpha$ and $\ell > k + \alpha$.
For $\F = \C$, we obtain the parameter recommendations
\begin{equation} \label{eqn:flat-params}
k_{\flat} := \max\left\{ r + 1, \
\left \lfloor T \cdot \frac{\sqrt{r(T - r)} - r}{T - 2r} \right \rfloor \right\}
\quad\text{and}\quad
\ell_{\flat} := T - k_{\flat}.
\end{equation}
In case $\F = \R$, we modify the formula~\cref{eqn:flat-params}
so that
\begin{equation} \label{eqn:flat-params-R}
k_{\flat} := \max\left\{ r + 2, \left \lfloor (T-1) \cdot \frac{\sqrt{r(T - r - 2)(1 - 2/(T-1))} - (r-1)}{T - 2r - 1} \right \rfloor \right\}.
\end{equation}
We omit the routine details behind these calculations.

\subsubsection{Decaying Spectrum or Spectral Gap}

Suppose that the singular values $\sigma_j(\mtx{A})$ decay
at a slow to moderate rate for $j > r$.
Alternatively, we may suppose that there is a
gap in the singular value spectrum at an index $j > r$.

In this setting, we want to exploit decay in the tail energy by setting $k \gg r$,
but we need to ensure that the term $f(k, \ell)$ in \cref{eqn:err-frob}
remains small by setting $\ell \approx 2k + \alpha$.
This intuition leads to the parameter recommendations
\begin{equation} \label{eqn:decay-params}
k_{\natural} := \max\{ r + \alpha + 1, \ \lfloor (T-\alpha)/3 \rfloor \}
\quad\text{and}\quad
\ell_{\natural} := T - k_{\natural}.
\end{equation} This is the best single choice for handling a range of examples.
The parameter recommendation \cref{eqn:my-param-choice} is an
instance of \cref{eqn:decay-params} with a minimal value of $T$.

\subsubsection{Rapidly Decaying Spectrum}

Last, assume that the singular values $\sigma_j(\mtx{A})$
decay very quickly for $j > r$. This situation occurs in the application~\cite{YUTC16:Sketchy-Decisions}
that motivated us to write this paper.

In this setting, we want to exploit decay in the tail energy
fully by setting $k$ as large as possible;
the benefit outweighs the increase in $f(k, \ell)$
from choosing $\ell = k + \alpha + 1$, the minimum possible value.
This intuition leads to the parameter recommendations
\begin{equation} \label{eqn:fast-decay-params}
k_{\sharp} := \lfloor (T-\alpha-1)/2 \rfloor
\quad\text{and}\quad
\ell_{\sharp} := T - k_{\sharp}.
\end{equation} Note that the choice~\eqref{eqn:fast-decay-params} is unwise unless the input matrix has sharp spectral decay.

\section{Low-Rank Approximations with Convex Structure}
\label{sec:structured}

In many instances, we need to reconstruct an input matrix
that has additional structure, such as symmetry or positive-semidefiniteness.
The approximation formula~\cref{eqn:Ahat}
from \cref{sec:low-rank-recon}
produces an approximation with
no special properties aside from a bound on its rank.
Therefore, we may have to reform our approximation to instill additional virtues.

In this section, we consider a class of problems where
the input matrix belongs to a convex set and we seek an
approximation that belongs to the same set.  To accomplish this
goal, we replace our initial approximation with the
closest point in the convex set.  This procedure always
improves the Frobenius-norm error.

We address two specific examples: (i) the case where
the input matrix is conjugate symmetric and (ii) the case
where the input matrix is positive semidefinite.  In both
situations, we must design the algorithm carefully
to avoid forming large matrices.

\subsection{Projection onto a Convex Set}
\label{sec:pocs}

Let $C$ be a closed and convex set of matrices in $\F^{m \times n}$.
Define the projector $\mtx{\Pi}_C$ onto the set $C$ to be the map
$$
\mtx{\Pi}_C : \F^{m \times n} \to C
\quad\text{where}\quad
\mtx{\Pi}_C(\mtx{M}) := \argmin\big\{ \fnormsq{ \mtx{C} - \mtx{M} } : \mtx{C} \in C \big\}.
$$
The $\argmin$ operator returns the matrix $\mtx{C}_{\star} \in C$ that solves the optimization problem.
The solution $\mtx{C}_{\star}$ is uniquely determined because the squared Frobenius norm is strictly
convex and the constraint set $C$ is closed and convex.

\subsection{Structure via Convex Projection}

Suppose that the input matrix $\mtx{A}$ belongs to the closed, convex set $C \subset \F^{m \times n}$.
Let $\hat{\mtx{A}}_{\rm in} \in \F^{m \times n}$ be an initial approximation of $\mtx{A}$.
We can produce a new approximation $\mtx{\Pi}_C(\hat{\mtx{A}}_{\rm in})$
by projecting the initial approximation onto the constraint set.
This procedure always improves the approximation quality in Frobenius norm.

\begin{fact}[Convex Structure Reduces Error] \label{fact:convex-structure}
Let $C \in \F^{m \times n}$ be a closed convex set, and suppose that $\mtx{A} \in C$.
For any initial approximation $\hat{\mtx{A}}_{\rm in} \in \F^{m \times n}$,
\begin{equation} \label{eqn:convex-structure}
\fnorm{ \mtx{A} - \mtx{\Pi}_C(\hat{\mtx{A}}_{\rm in}) }
	\leq \fnorm{ \mtx{A} - \hat{\mtx{A}}_{\rm in} }.
\end{equation}
\end{fact}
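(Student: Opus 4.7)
The plan is to invoke the standard nonexpansiveness property of projections onto closed convex sets in a Hilbert space, specialized to the Frobenius inner product. Regard $\F^{m \times n}$ as a real Hilbert space under the inner product $\ip{\mtx{M}}{\mtx{N}} := \real \trace(\mtx{M}^* \mtx{N})$, so that $\fnormsq{\mtx{M}} = \ip{\mtx{M}}{\mtx{M}}$. Since $C$ is closed and convex and the squared Frobenius norm is strictly convex and coercive, the projection $\mtx{\Pi}_C$ is well-defined, and the key tool I will use is the first-order optimality (variational) inequality characterizing it.

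First I would record the variational inequality: for every $\mtx{M} \in \F^{m \times n}$ and every $\mtx{C} \in C$,
\begin{equation*}
\ip{\mtx{M} - \mtx{\Pi}_C(\mtx{M})}{\mtx{C} - \mtx{\Pi}_C(\mtx{M})} \leq 0.
\end{equation*}
This is immediate from the fact that $\mtx{\Pi}_C(\mtx{M})$ minimizes the convex function $\mtx{C} \mapsto \fnormsq{\mtx{C} - \mtx{M}}$ over $C$: take a directional derivative along the segment from $\mtx{\Pi}_C(\mtx{M})$ to any other $\mtx{C} \in C$, which must be nonnegative at $\mtx{\Pi}_C(\mtx{M})$.

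Next I would apply this inequality to $\mtx{M} := \hat{\mtx{A}}_{\rm in}$ and $\mtx{C} := \mtx{A}$, which lies in $C$ by hypothesis. Writing $\hat{\mtx{A}}_{\rm proj} := \mtx{\Pi}_C(\hat{\mtx{A}}_{\rm in})$ for brevity, the inequality reads
\begin{equation*}
\ip{\hat{\mtx{A}}_{\rm in} - \hat{\mtx{A}}_{\rm proj}}{\mtx{A} - \hat{\mtx{A}}_{\rm proj}} \leq 0.
\end{equation*}
Then I would decompose $\mtx{A} - \hat{\mtx{A}}_{\rm in} = (\mtx{A} - \hat{\mtx{A}}_{\rm proj}) + (\hat{\mtx{A}}_{\rm proj} - \hat{\mtx{A}}_{\rm in})$ and expand:
\begin{equation*}
\fnormsq{\mtx{A} - \hat{\mtx{A}}_{\rm in}} = \fnormsq{\mtx{A} - \hat{\mtx{A}}_{\rm proj}} + \fnormsq{\hat{\mtx{A}}_{\rm proj} - \hat{\mtx{A}}_{\rm in}} + 2\ip{\mtx{A} - \hat{\mtx{A}}_{\rm proj}}{\hat{\mtx{A}}_{\rm proj} - \hat{\mtx{A}}_{\rm in}}.
\end{equation*}
By the variational inequality, the cross term is nonnegative, so $\fnormsq{\mtx{A} - \hat{\mtx{A}}_{\rm in}} \geq \fnormsq{\mtx{A} - \hat{\mtx{A}}_{\rm proj}}$, and taking square roots yields \cref{eqn:convex-structure}.

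There is essentially no main obstacle here: the claim is a textbook nonexpansiveness result. The only points needing care are treating the complex case correctly (using the real part of the trace inner product so that the derivative argument yields a genuine inner-product inequality) and making sure the decomposition identity is written in the appropriate real-bilinear form. Everything else is routine.
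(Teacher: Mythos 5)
Your argument is correct and is exactly the route the paper takes: the paper omits the details but explicitly attributes the result to the first-order optimality conditions for the Frobenius-norm projection onto $C$, which is precisely the variational inequality you state and then exploit via the standard expansion of $\fnormsq{\mtx{A} - \hat{\mtx{A}}_{\rm in}}$. Nothing further is needed.
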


This result is well known in convex analysis.  It follows directly from
the first-order optimality conditions~\cite[Sec.~4.2.3]{BV04:Convex-Optimization}
for the Frobenius-norm projection of a matrix onto the set $C$.
We omit the details.

\begin{warning}[Spectral Norm]
\Cref{fact:convex-structure} does not hold if we replace the Frobenius
norm by the spectral norm.
\end{warning}

\subsection{Low-Rank Approximation with Conjugate Symmetry}

When the input matrix is conjugate symmetric,
it is often critical to produce a conjugate symmetric approximation.
We can do so by combining the simple approximation from \cref{sec:low-rank-recon}
with the projection step outlined in \cref{sec:pocs}.

\subsubsection{Conjugate Symmetric Projection}

Define the set $\mathbb{H}^n(\F)$ of conjugate symmetric matrices with dimension $n$ over the field $\F$:
$$
\mathbb{H}^n := \mathbb{H}^n(\F) := \{ \mtx{C} \in \F^{n \times n} : \mtx{C} = \mtx{C}^* \}.
$$
The set $\mathbb{H}^n(\F)$ is convex because it forms a real-linear subspace in $\F^{n \times n}$.
In the sequel, we omit the field $\F$ from the notation unless there is a possibility of confusion.

The projection $\mtx{M}_{\rm sym}$ of a matrix $\mtx{M} \in \F^{n \times n}$
onto the set $\mathbb{H}^n$ takes the form
\begin{equation} \label{eqn:sym-part}
\mtx{M}_{\rm sym} := \mtx{\Pi}_{\mathbb{H}^n}(\mtx{M}) = \frac{1}{2} (\mtx{M} + \mtx{M}^*).
\end{equation}
For example, see~\cite[Sec.~2]{Hig89:Matrix-Nearness}.

\subsubsection{Computing a Conjugate Symmetric Approximation}
\label{sec:sym-recon}

Assume that the input matrix $\mtx{A} \in \mathbb{H}^n$ is conjugate symmetric.
Let $\hat{\mtx{A}} := \mtx{QX}$ be an initial
rank-$k$ approximation of $\mtx{A}$
obtained from the approximation procedure \cref{eqn:Ahat}.
We can form a better Frobenius-norm approximation $\hat{\mtx{A}}_{\rm sym}$
by projecting $\hat{\mtx{A}}$ onto $\mathbb{H}^n$:
\begin{equation} \label{eqn:Ahat-sym}
\hat{\mtx{A}}_{\rm sym} := \mtx{\Pi}_{\mathbb{H}^n}(\hat{\mtx{A}})
	= \frac{1}{2}(\hat{\mtx{A}} + \hat{\mtx{A}}^*)
	= \frac{1}{2}(\mtx{QX} + \mtx{X}^* \mtx{Q}^*).
\end{equation}
The second relation follows from~\cref{eqn:sym-part}.

In most cases, it is preferable to present the approximation~\cref{eqn:Ahat-sym}
in factored form.  To do so, we observe that
$$
\frac{1}{2}(\mtx{QX} + \mtx{X}^* \mtx{Q}^*) = \frac{1}{2}
	\begin{bmatrix} \mtx{Q} & \mtx{X}^* \end{bmatrix}
	\begin{bmatrix} \mtx{0} & \Id \\ \Id & \mtx{0} \end{bmatrix}
	\begin{bmatrix} \mtx{Q} & \mtx{X}^* \end{bmatrix}^*.
$$
Concatenate $\mtx{Q}$ and $\mtx{X}^*$,
and compute the orthogonal--triangular factorization
\begin{equation} \label{eqn:qx-qr}
\begin{bmatrix} \mtx{Q} & \mtx{X}^* \end{bmatrix}
=: \mtx{U} \begin{bmatrix} \mtx{T}_1 & \mtx{T}_2 \end{bmatrix}
\quad\text{where}\quad
\mtx{U} \in \F^{n \times 2k}
\text{ and }
\mtx{T}_1 \in \F^{2k \times k}.
\end{equation}
Of course, we only need to orthogonalize the $k$ columns of $\mtx{X}^*$,
which permits some computational efficiencies.
Next, introduce the matrix
\begin{equation} \label{eqn:def-S}
\mtx{S} := \frac{1}{2} \begin{bmatrix} \mtx{T}_1 & \mtx{T}_2 \end{bmatrix}
	\begin{bmatrix} \mtx{0} & \Id \\ \Id & \mtx{0} \end{bmatrix}
	\begin{bmatrix} \mtx{T}_1 & \mtx{T}_2 \end{bmatrix}^*
	= \frac{1}{2}( \mtx{T}_1 \mtx{T}_2^* + \mtx{T}_2 \mtx{T}_1^* )
	\in \F^{2k \times 2k}.
\end{equation}
Combine the last four displays to obtain the rank-$(2k)$ conjugate symmetric approximation
\begin{equation} \label{eqn:Ahat-sym-factored}
\hat{\mtx{A}}_{\rm sym} 	= \mtx{USU}^*.
\end{equation}
From this expression, it is easy to obtain other types of factorizations,
such as an eigenvalue decomposition, by further processing. 
\begin{algorithm}[tb]
  \caption{\textsl{Low-Rank Symmetric Approximation.}  Implements~\cref{eqn:Ahat-sym-factored}.
  \label{alg:symm-low-rank-recon}}
  \begin{algorithmic}[1]
  	\Require{Matrix dimensions $m = n$}
    \Ensure{For $q = 2k$, returns factors $\mtx{U} \in \F^{n \times q}$ with orthonormal columns and $\mtx{S} \in \mathbb{H}^q$ that form a rank-$q$ conjugate symmetric approximation $\hat{\mtx{A}}_{\rm out}= \mtx{USU}^*$ of the sketched matrix}
\vspace{0.5pc}

	\Function{Sketch.LowRankSymApprox}{\,}
    \State	$(\mtx{Q}, \mtx{X}) \gets \textsc{LowRankApprox}(\,)$
		 \Comment{Get $\hat{\mtx{A}}_{\rm in} = \mtx{QX}$}
	\State	$(\mtx{U}, \mtx{T}) \gets \texttt{qr}([\mtx{Q}, \mtx{X}^*], \texttt{0})$
		 \label{algl:symm-qr}
		 \Comment{Orthogonal factorization of concatenation}
	\State	$\mtx{T}_1 \gets \mtx{T}(\texttt{:}, 1\texttt{:}k)$
		and $\mtx{T}_2 \gets \mtx{T}(\texttt{:}, (k+1)\texttt{:}(2k) )$ 		\Comment{Extract submatrices}
	\State	$\mtx{S} \gets (\mtx{T}_1 \mtx{T}_2^* + \mtx{T}_2 \mtx{T}_1^*)/2$
		\label{algl:symm-symm}
		\Comment{Symmetrize}
	\State \Return{$(\mtx{U}, \mtx{S})$}
		\Comment{Return factors}
	\EndFunction
	\vspace{0.25pc}

\end{algorithmic}
\end{algorithm}

\subsubsection{Algorithm, Costs, and Error}

\Cref{alg:symm-low-rank-recon} contains pseudocode for producing
a conjugate symmetric approximation of the form~\cref{eqn:Ahat-sym-factored}
from a sketch of the input matrix.  One can make this algorithm slightly more
efficient by taking advantage of the fact that $\mtx{Q}$ already has orthogonal columns;
we omit the details.

For \cref{alg:symm-low-rank-recon},
the total working storage is $\Theta(kn)$
and the arithmetic cost is $\Theta( k \ell n )$.
These costs are dominated by the call to \textsc{Sketch.LowRankApprox}.

Combining \cref{thm:err-frob} with \cref{fact:convex-structure},
we have the following bound on the error of the symmetric
approximation~\cref{eqn:Ahat-sym-factored},
implemented in \cref{alg:symm-low-rank-recon}.
As a consequence, the parameter recommendations
from \cref{sec:best-parameters}
are also valid here.

\begin{corollary}[Low-Rank Symmetric Approximation] \label{cor:symm-recon}
Assume that the input matrix $\mtx{A} \in \mathbb{H}^n(\F)$ is conjugate symmetric,
and assume that the sketch size parameters satisfy $\ell > k + \alpha$.
Draw random test matrices $\mtx{\Omega} \in \F^{n \times k}$ and $\mtx{\Psi} \in \F^{\ell \times n}$
independently from the standard normal distribution.
Then the rank-$(2k)$ conjugate symmetric approximation $\hat{\mtx{A}}_{\rm sym}$
produced by~\cref{eqn:Ahat-sym} or~\cref{eqn:Ahat-sym-factored} satisfies
$$
\Expect \fnormsq{ \mtx{A} - \hat{\mtx{A}}_{\rm sym} }
	\leq (1 + f(k,\ell)) \cdot \min_{\varrho < k - \alpha} (1 + f(\varrho,k)) \cdot \tau_{\varrho+1}^2(\mtx{A}).
$$
The index $\varrho$ ranges over natural numbers.
The quantity $\alpha(\R) := 1$ and $\alpha(\C) := 0$;
the function $f(s, t) := s/(t-s-\alpha)$;
the tail energy $\tau_j^2$ is defined in~\cref{eqn:tail-energy}.
\end{corollary}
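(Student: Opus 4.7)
The plan is to derive \cref{cor:symm-recon} as an immediate consequence of \cref{thm:err-frob} combined with \cref{fact:convex-structure}, using the fact that the conjugate symmetric approximation $\hat{\mtx{A}}_{\rm sym}$ is exactly the Frobenius projection of the unstructured approximation $\hat{\mtx{A}}$ onto the closed, convex set $\mathbb{H}^n(\F)$ of conjugate symmetric matrices.

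First I would verify that the two formulas for $\hat{\mtx{A}}_{\rm sym}$ agree, so that the error bound applies to both presentations. The set $\mathbb{H}^n(\F)$ is a real-linear subspace of $\F^{n\times n}$, hence closed and convex, and the projector onto it is Hermitian symmetrization $\mtx{M} \mapsto \frac{1}{2}(\mtx{M}+\mtx{M}^*)$ as recorded in \cref{eqn:sym-part}. Therefore \cref{eqn:Ahat-sym} already establishes that $\hat{\mtx{A}}_{\rm sym}=\mtx{\Pi}_{\mathbb{H}^n}(\hat{\mtx{A}})$. The factored presentation \cref{eqn:Ahat-sym-factored} used in \cref{alg:symm-low-rank-recon} is algebraically the same object, since substituting the orthogonal--triangular factorization \cref{eqn:qx-qr} into \cref{eqn:def-S} collapses to $\mtx{USU}^* = \frac{1}{2}(\mtx{QX}+\mtx{X}^*\mtx{Q}^*)$, which matches \cref{eqn:Ahat-sym}.

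Next, because the input matrix is assumed to lie in $\mathbb{H}^n$, I can apply \cref{fact:convex-structure} with $C = \mathbb{H}^n$ to obtain the realization-wise inequality
\begin{equation*}
\fnorm{\mtx{A} - \hat{\mtx{A}}_{\rm sym}} \;\leq\; \fnorm{\mtx{A} - \hat{\mtx{A}}}.
\end{equation*}
Squaring and taking expectation over the test matrices $(\mtx{\Omega},\mtx{\Psi})$ preserves the inequality, and then \cref{thm:err-frob}---whose hypotheses $\ell > k + \alpha$ and i.i.d.\ standard normal test matrices are exactly those imposed in the corollary, now specialized to $m=n$---supplies the claimed bound on $\Expect\fnormsq{\mtx{A} - \hat{\mtx{A}}}$ and hence on $\Expect\fnormsq{\mtx{A} - \hat{\mtx{A}}_{\rm sym}}$.

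No genuine obstacle presents itself, because each ingredient is already in place. The only mild points worth double-checking are that $\mathbb{H}^n(\F)$ really is convex (trivial, being a real subspace), that the dimensions in \cref{thm:err-frob} specialize correctly (namely $\mtx{\Psi}\in\F^{\ell\times m}$ with $m=n$ matches the $\mtx{\Psi}\in\F^{\ell\times n}$ assumed in the corollary), and that the passage from the almost-sure bound to the bound on expectations is justified, which follows from monotonicity of $\Expect$ applied to nonnegative random variables.
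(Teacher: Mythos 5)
Your proposal is correct and matches the paper's argument exactly: the paper obtains this corollary by combining \cref{thm:err-frob} with \cref{fact:convex-structure}, noting that $\hat{\mtx{A}}_{\rm sym}=\mtx{\Pi}_{\mathbb{H}^n}(\hat{\mtx{A}})$ so the symmetrization can only decrease the Frobenius error. Your additional checks (equivalence of the two presentations, convexity of $\mathbb{H}^n$, monotonicity of expectation) are all sound.
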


\subsection{Low-Rank Positive-Semidefinite Approximation}

We often encounter the problem of approximating a positive-semidefinite (psd)
matrix.  In many situations, it is important to produce an approximation
that maintains positivity.  Our approach combines the
approximation~\cref{eqn:Ahat} from \cref{sec:low-rank-recon}
with the projection step from \cref{sec:pocs}.

\subsubsection{PSD Projection}

We introduce the set $\mathbb{H}_+^n(\F)$ of psd matrices with dimension $n$
over the field $\F$:
$$
\mathbb{H}_+^n := \mathbb{H}_+^n(\F)
:= \big\{ \mtx{C} \in \mathbb{H}^n : \vct{z}^* \mtx{C} \vct{z} \geq 0
\text{ for each $\vct{z} \in \F^n$} \big\}.
$$
The set $\mathbb{H}_+^n(\F)$ is convex because it is an intersection of halfspaces.
In the sequel, we omit the field $\F$ from the notation unless there is a possibility
for confusion.

Given a matrix $\mtx{M} \in \F^{n \times n}$, we construct its projection
onto the set $\mathbb{H}_+^n$ in three steps.
First, form the projection $\mtx{M}_{\rm sym} := \mtx{\Pi}_{\mathbb{H}^n}(\mtx{M})$
onto the conjugate symmetric matrices, as in~\cref{eqn:sym-part}.
Second, compute an eigenvalue decomposition $\mtx{M}_{\rm sym} =: \mtx{VDV}^*$.
Third, form $\mtx{D}_+$ by zeroing out the negative entries of $\mtx{D}$.
Then the projection $\mtx{M}_+$ of the matrix $\mtx{M}$ onto $\mathbb{H}^n_+$ takes the form
$$
\mtx{M}_+ := \mtx{\Pi}_{\mathbb{H}_+^n}(\mtx{M}) = \mtx{V} \mtx{D}_+ \mtx{V}^*.
$$
For example, see~\cite[Sec.~3]{Hig89:Matrix-Nearness}.

\subsubsection{Computing a PSD Approximation}
\label{sec:psd-recon}

Assume that the input matrix $\mtx{A} \in \mathbb{H}_+^n$ is psd.
Let $\hat{\mtx{A}} := \mtx{QX}$ be an initial approximation
of $\mtx{A}$ obtained from the approximation procedure~\cref{eqn:Ahat}.
We can form a psd approximation $\hat{\mtx{A}}_+$ by projecting $\hat{\mtx{A}}$ onto
the set $\mathbb{H}_+^n$.

To do so, we repeat the computations~\cref{eqn:qx-qr} and~\cref{eqn:def-S}
to obtain the symmetric approximation $\hat{\mtx{A}}_{\rm sym}$
presented in~\cref{eqn:Ahat-sym-factored}.
Next, form an eigenvalue decomposition of the matrix $\mtx{S}$
given by~\cref{eqn:def-S}:
$$
\mtx{S} =: \mtx{VDV}^*.
$$
In view of~\cref{eqn:Ahat-sym-factored}, we obtain
an eigenvalue decomposition of $\hat{\mtx{A}}_{\rm sym}$:
$$
\hat{\mtx{A}}_{\rm sym} = (\mtx{UV}) \mtx{D} (\mtx{UV})^*.
$$
To obtain the psd approximation $\hat{\mtx{A}}_+$, we simply
replace $\mtx{D}$ by its nonnegative part $\mtx{D}_+$ to arrive at
the rank-$(2k)$ psd approximation
\begin{equation} \label{eqn:Ahat-psd-factored}
\hat{\mtx{A}}_+ := \mtx{\Pi}_{\mathbb{H}_+^n}(\hat{\mtx{A}}) = (\mtx{UV}) \mtx{D}_+ (\mtx{UV})^*.
\end{equation}
This formula delivers an approximate eigenvalue decomposition of the input matrix.

\begin{algorithm}[tb]
  \caption{\textsl{Low-Rank PSD Approximation.} Implements~\cref{eqn:Ahat-psd-factored}.
  \label{alg:psd-low-rank-recon}}
  \begin{algorithmic}[1]
  \Require{Matrix dimensions $m = n$}
    \Ensure{For $q = 2k$, returns factors $\mtx{U} \in \F^{n \times q}$ with orthonormal columns and nonnegative, diagonal $\mtx{D} \in \mathbb{H}_+^q$ that form a rank-$q$ psd approximation $\hat{\mtx{A}}_{\rm out} = \mtx{UDU}^*$ of the sketched matrix}
\vspace{0.5pc}

	\Function{Sketch.LowRankPSDApprox}{\,}
    \State	$(\mtx{U},\mtx{S}) \gets \textsc{LowRankSymApprox}(\,)$
		 \label{algl:psd-orth}
		 \Comment{Get $\hat{\mtx{A}}_{\rm in} = \mtx{USU}^*$}
	\State	$(\mtx{V}, \mtx{D}) \gets \texttt{eig}(\mtx{S})$
		\label{algl:psd-eig}
		\Comment{Form eigendecomposition}
	\State	$\mtx{U} \gets \mtx{U} \mtx{V}$
		\label{algl:psd-consol}
		\Comment{Consolidate orthonormal factors}
	\State	$\mtx{D} \gets \texttt{max}(\mtx{D}, \texttt{0})$
		\Comment{Zero out negative eigenvalues}
	\State \Return{$(\mtx{U}, \mtx{D})$}
			\EndFunction
	\vspace{0.25pc}

\end{algorithmic}
\end{algorithm}

\subsubsection{Algorithm, Costs, and Error}

\Cref{alg:psd-low-rank-recon} contains pseudocode for producing
a psd approximation of the form~\cref{eqn:Ahat-psd-factored}
from a sketch of the input matrix.
As in \cref{alg:symm-low-rank-recon}, some additional efficiencies are possible

The costs of \cref{alg:psd-low-rank-recon}
are similar with the symmetric approximation method, \cref{alg:symm-low-rank-recon}.
The working storage cost is $\Theta(kn)$,
and the arithmetic cost is $\Theta(k \ell n)$.

Combining \cref{thm:err-frob,fact:convex-structure},
we obtain a bound on the approximation error identical to
\cref{cor:symm-recon}.  We omit the details.

\section{Fixed-Rank Approximations from the Sketch}
\label{sec:fixed-rank}

The algorithms in \cref{sec:low-rank-recon,sec:structured}
produce high-quality approximations with rank $k$, but
we sometimes need to reduce the rank to match the target rank $r$.
At the same time, we may have to impose additional structure.
This section explains how to develop algorithms that produce
a rank-$r$ structured approximation.

The technique is conceptually similar to the approach in \cref{sec:structured}.
We project an initial high-quality approximation onto the set of rank-$r$
matrices.  This procedure preserves both conjugate symmetry and the psd property.
The analysis in \cref{sec:pocs} does not apply because
the set of matrices with fixed rank is not convex.
We present a general argument to show that the
cost is negligible.

\subsection{A General Error Bound for Fixed-Rank Approximation}

If we have a good initial approximation of the input matrix,
we can replace this initial approximation by a fixed-rank matrix
without increasing the error significantly.

\begin{proposition}[Error for Fixed-Rank Approximation] \label{prop:fixed-rank-err}
Let $\mtx{A} \in \F^{m \times n}$ be a input matrix,
and let $\hat{\mtx{A}}_{\rm in} \in \F^{m \times n}$ be an approximation.
For any rank parameter $r$,
\begin{equation} \label{eqn:fixed-rank-frob}
\fnorm{ \mtx{A} - \lowrank{\hat{\mtx{A}}_{\rm in}}{r} }
	\leq \tau_{r+1}(\mtx{A}) + 2 \fnorm{ \mtx{A} - \hat{\mtx{A}}_{\rm in} }.
\end{equation}
Recall that $\lowrank{\cdot}{r}$ returns a best rank-$r$ approximation
with respect to Frobenius norm.
\end{proposition}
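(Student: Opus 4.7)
My plan is to obtain the bound via two applications of the triangle inequality for the Frobenius norm, coupled with the defining optimality property of $\lowrank{\cdot}{r}$.

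First I would apply the triangle inequality to split the target quantity:
\begin{equation*}
\fnorm{\mtx{A} - \lowrank{\hat{\mtx{A}}_{\rm in}}{r}}
    \leq \fnorm{\mtx{A} - \hat{\mtx{A}}_{\rm in}} + \fnorm{\hat{\mtx{A}}_{\rm in} - \lowrank{\hat{\mtx{A}}_{\rm in}}{r}}.
\end{equation*}
The first summand is already of the right form, so the remaining task is to control the second summand by $\tau_{r+1}(\mtx{A}) + \fnorm{\mtx{A} - \hat{\mtx{A}}_{\rm in}}$.

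Next, I would invoke the definition of $\lowrank{\hat{\mtx{A}}_{\rm in}}{r}$ as a best rank-$r$ approximation of $\hat{\mtx{A}}_{\rm in}$ in Frobenius norm: for every matrix $\mtx{B}$ with $\rank(\mtx{B}) \leq r$,
\begin{equation*}
\fnorm{\hat{\mtx{A}}_{\rm in} - \lowrank{\hat{\mtx{A}}_{\rm in}}{r}} \leq \fnorm{\hat{\mtx{A}}_{\rm in} - \mtx{B}}.
\end{equation*}
I would then choose the specific competitor $\mtx{B} = \lowrank{\mtx{A}}{r}$, which is rank-$r$, and apply the triangle inequality again:
\begin{equation*}
\fnorm{\hat{\mtx{A}}_{\rm in} - \lowrank{\mtx{A}}{r}}
    \leq \fnorm{\hat{\mtx{A}}_{\rm in} - \mtx{A}} + \fnorm{\mtx{A} - \lowrank{\mtx{A}}{r}}
    = \fnorm{\mtx{A} - \hat{\mtx{A}}_{\rm in}} + \tau_{r+1}(\mtx{A}),
\end{equation*}
where the last equality uses the Eckart--Young identity packaged in the definition~\cref{eqn:tail-energy}.

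Combining the two displays yields the claim with the factor $2$ in front of $\fnorm{\mtx{A} - \hat{\mtx{A}}_{\rm in}}$. There is no real obstacle: the argument is a one-line triangle inequality plus the Eckart--Young optimality of $\lowrank{\mtx{A}}{r}$, and it works identically for any unitarily invariant norm (although here we only need the Frobenius case). The factor $2$ is the familiar ``best-in-class vs.\ projected-approximation'' overhead, and it cannot be improved in general without extra assumptions on $\hat{\mtx{A}}_{\rm in}$.
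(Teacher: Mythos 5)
Your proof is correct and follows essentially the same route as the paper's: a triangle inequality, the optimality of $\lowrank{\hat{\mtx{A}}_{\rm in}}{r}$ against the competitor $\lowrank{\mtx{A}}{r}$, and a second triangle inequality to identify the tail energy. Nothing to add.
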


\begin{proof}
Calculate that
$$
\begin{aligned}
\fnorm{ \mtx{A} - \lowrank{\hat{\mtx{A}}_{\rm in} }{r} }
	&\leq \fnorm{ \mtx{A} - \hat{\mtx{A}}_{\rm in} } + \fnorm{ \hat{\mtx{A}}_{\rm in}  - \lowrank{\hat{\mtx{A}}_{\rm in}}{r} } \\
	&\leq \fnorm{ \mtx{A} - \hat{\mtx{A}}_{\rm in}  } + \fnorm{ \hat{\mtx{A}}_{\rm in}  - \lowrank{\mtx{A}}{r} } \\
	&\leq 2 \fnorm{ \mtx{A} - \hat{\mtx{A}}_{\rm in}  } + \fnorm{ \mtx{A} - \lowrank{\mtx{A}}{r} }.
\end{aligned}
$$
The first and last relations are triangle inequalities.  To reach the second line,
note that $\lowrank{\hat{\mtx{A}}_{\rm in}}{r}$ is a best rank-$r$ approximation of $\hat{\mtx{A}}_{\rm in}$,
while $\lowrank{ \mtx{A}}{r}$ is an undistinguished rank-$r$ matrix.
Finally, identify the tail energy \cref{eqn:tail-energy}.
\end{proof}

\begin{remark}[Spectral Norm] \label{rem:fixed-rank-spec}
A result analogous to \cref{prop:fixed-rank-err} also holds
with respect to the spectral norm.  The proof is the same.
\end{remark}

\subsection{Fixed-Rank Approximation from the Sketch}

Suppose that we wish to compute a rank-$r$ approximation
of the input matrix $\mtx{A} \in \F^{m \times n}$.
First, we form an initial approximation $\hat{\mtx{A}} := \mtx{QX}$
using the procedure~\cref{eqn:Ahat}.  Then we obtain a rank-$r$
approximation $\lowrank{\hat{\mtx{A}}}{r}$ of the input matrix
by replacing $\hat{\mtx{A}}$ with its best rank-$r$ approximation
in Frobenius norm:
\begin{equation} \label{eqn:Ahat-fixed0}
\lowrank{\hat{\mtx{A}}}{r} = \lowrank{ \mtx{Q}\mtx{X} }{r}.
\end{equation}

We can complete this operation by working directly
with the factors.  Indeed, suppose that
$\mtx{X} = \mtx{U\Sigma V}^*$ is an SVD of $\mtx{X}$.
Then $\mtx{QX}$ has an SVD of the form
$$
\mtx{QX} = (\mtx{QU}) \mtx{\Sigma} \mtx{V}^*.
$$
As such, there is also a best rank-$r$ approximation of $\mtx{QX}$ that satisfies
$$
\lowrank{\mtx{QX}}{r} = (\mtx{QU}) \lowrank{\mtx{\Sigma}}{r} \mtx{V}^*
	= \mtx{Q} \lowrank{\mtx{X}}{r}.
$$
Therefore, the desired rank-$r$ approximation~\cref{eqn:Ahat-fixed0}
can also be expressed as
\begin{equation} \label{eqn:Ahat-fixed}
\lowrank{\hat{\mtx{A}}}{r} = \mtx{Q} \lowrank{\mtx{X}}{r}.
\end{equation}
The formula~\cref{eqn:Ahat-fixed} is more computationally
efficient than~\cref{eqn:Ahat-fixed0}
because the factor $\mtx{X} \in \F^{k \times n}$
is much smaller than the approximation $\hat{\mtx{A}} \in \F^{m \times n}$.

\begin{remark}[Prior Work]
The approximation $\lowrank{\hat{\mtx{A}}}{r}$ is algebraically, but not numerically,
equivalent to a formula proposed by Clarkson \& Woodruff~\cite[Thm.~4.8]{CW09:Numerical-Linear}.
As above, our formulation improves on theirs by avoiding
a badly conditioned least-squares problem.
\end{remark}

\subsubsection{Algorithm and Costs}

\Cref{alg:fixed-rank-recon} contains pseudocode for
computing the fixed-rank approximation~\cref{eqn:Ahat-fixed}.

The fixed-rank approximation in \cref{alg:fixed-rank-recon}
has storage and arithmetic costs on the same order as the simple
low-rank approximation (\cref{alg:simple-low-rank-recon}).
Indeed, to compute the truncated SVD and perform the matrix--matrix multiplication, we expend only $\Theta(k^2 n)$ additional flops.
Thus, the total working storage is $\Theta(k (m + n))$ numbers
and the arithmetic cost is $\Theta(k\ell(m+n))$ flops.

\begin{algorithm}[tb]
  \caption{\textsl{Fixed-Rank Approximation.}  Implements~\cref{eqn:Ahat-fixed}.
  \label{alg:fixed-rank-recon}}
  \begin{algorithmic}[1]
  	\Require{Target rank $r \leq k$}
    \Ensure{Returns factors $\mtx{Q} \in \F^{m \times r}$ and $\mtx{V} \in \F^{n \times r}$ with orthonormal columns and nonnegative diagonal $\mtx{\Sigma} \in \F^{r \times r}$ that form a rank-$r$ approximation $\hat{\mtx{A}}_{\rm out} = \mtx{Q \Sigma V}^*$ of the sketched matrix}
\vspace{0.5pc}

	\Function{Sketch.FixedRankApprox}{$r$}
    \State	$(\mtx{Q}, \mtx{X}) \gets \textsc{LowRankApprox}(\,)$
    	\Comment{Get $\hat{\mtx{A}}_{\rm in} = \mtx{QX}$}
	\State	$(\mtx{U}, \mtx{\Sigma}, \mtx{V}) \gets \texttt{svds}(\mtx{X}, r)$
		\label{line:fr-svd}
		\Comment{Form full SVD and truncate}
	\State	$\mtx{Q} \gets \mtx{Q} \mtx{U}$
		\label{line:fr-consol}
		\Comment{Consolidate orthonormal factors}
	\State \Return{$(\mtx{Q}, \mtx{\Sigma}, \mtx{V})$}
			\EndFunction
	\vspace{0.25pc}
\end{algorithmic}
\end{algorithm}

\subsubsection{A Bound for the Error}

We can obtain an error bound for the rank-$r$ approximation~\cref{eqn:Ahat-fixed}
by combining \cref{thm:err-frob,prop:fixed-rank-err}.

\begin{corollary}[Fixed-Rank Approximation: Frobenius-Norm Error] \label{cor:fixed-rank-recon}
Assume the sketch size parameters satisfy $k > r + \alpha$ and $\ell > k + \alpha$.
Draw random test matrices $\mtx{\Omega} \in \F^{n \times k}$ and $\mtx{\Psi} \in \F^{\ell \times m}$
independently from the standard normal distribution.  Then the
rank-$r$ approximation $\lowrank{\hat{\mtx{A}}}{r}$ obtained from
the formula~\cref{eqn:Ahat-fixed} satisfies
\begin{multline} \label{eqn:fixed-rank-bound}
\Expect \fnorm{ \mtx{A} - \lowrank{ \hat{\mtx{A}} }{r} }
	\leq \tau_{r+1}(\mtx{A})
	+ 2 \sqrt{1+f(k,\ell)} \cdot \min_{\varrho < k - \alpha}
	\sqrt{1 + f(\varrho, k)} \cdot \tau_{\varrho+1}(\mtx{A}).
\end{multline}
The index $\varrho$ ranges over natural numbers.
The quantity $\alpha(\R) := 1$ and $\alpha(\C) := 0$;
the function $f(s, t) := s/(t-s-\alpha)$;
the tail energy $\tau_j^2$ is defined in~\cref{eqn:tail-energy}.
\end{corollary}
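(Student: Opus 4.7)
The plan is to derive the bound by composing the deterministic fixed-rank estimate of \cref{prop:fixed-rank-err} with the expectation bound of \cref{thm:err-frob}, connecting the two via Jensen's inequality.

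First, I would apply \cref{prop:fixed-rank-err} with the initial approximation $\hat{\mtx{A}}_{\rm in} = \hat{\mtx{A}} = \mtx{QX}$ produced by \cref{eqn:Ahat}. Because \cref{eqn:Ahat-fixed} is algebraically identical to $\lowrank{\hat{\mtx{A}}}{r}$ (the argument appearing just before the corollary shows $\mtx{Q}\lowrank{\mtx{X}}{r}$ is a best rank-$r$ approximation of $\mtx{QX}$), the deterministic inequality
$$
\fnorm{\mtx{A} - \lowrank{\hat{\mtx{A}}}{r}} \leq \tau_{r+1}(\mtx{A}) + 2\fnorm{\mtx{A} - \hat{\mtx{A}}}
$$
holds pointwise in the randomness of $(\mtx{\Omega},\mtx{\Psi})$.

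Next, I would take the expectation over the test matrices. Since $\tau_{r+1}(\mtx{A})$ is deterministic, only the second term requires attention. Here the issue is that \cref{thm:err-frob} controls the expected \emph{squared} Frobenius error, whereas \cref{eqn:fixed-rank-bound} involves the expected unsquared error. I would bridge this gap with Jensen's inequality applied to the concave square root: $\Expect \fnorm{\mtx{A} - \hat{\mtx{A}}} \leq \sqrt{\Expect \fnormsq{\mtx{A} - \hat{\mtx{A}}}}$.

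Finally, I would plug in \cref{thm:err-frob} and take a square root. Because the square root is monotone, it commutes with the minimum over $\varrho$, so
$$
\sqrt{\Expect \fnormsq{\mtx{A} - \hat{\mtx{A}}}} \leq \sqrt{1 + f(k,\ell)} \cdot \min_{\varrho < k - \alpha} \sqrt{1+f(\varrho,k)} \cdot \tau_{\varrho+1}(\mtx{A}).
$$
Multiplying by $2$ and adding $\tau_{r+1}(\mtx{A})$ yields \cref{eqn:fixed-rank-bound}. The only substantive step is the Jensen bound converting the mean-square estimate of \cref{thm:err-frob} into a mean estimate; everything else is bookkeeping. No genuine obstacle arises, since the hypothesis $\ell > k + \alpha$ already guarantees \cref{thm:err-frob} is applicable, and $k > r + \alpha$ ensures the minimum over $\varrho$ admits $\varrho = r$ as a legitimate choice if a concrete instantiation is desired.
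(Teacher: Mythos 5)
Your proposal is correct and follows exactly the route the paper intends: the corollary is stated as a combination of \cref{prop:fixed-rank-err} and \cref{thm:err-frob}, with the Jensen step $\Expect \fnorm{\mtx{A}-\hat{\mtx{A}}} \leq \bigl(\Expect \fnormsq{\mtx{A}-\hat{\mtx{A}}}\bigr)^{1/2}$ being the implicit bridge between the mean-square bound and the mean bound. You correctly identify this as the only substantive step and handle it properly, including the commutation of the square root with the minimum over $\varrho$.
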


This result indicates that the fixed-rank approximation $\lowrank{\hat{\mtx{A}}}{r}$
automatically exploits spectral decay in the input matrix $\mtx{A}$.  Moreover,
we can still rely on the parameter recommendations from \cref{sec:best-parameters}.
Ours is the first theory to provide these benefits.

\begin{remark}[Prior Work] \label{rem:fixed-rank-prior}
The analysis~\cite[Thm.~4.8]{CW09:Numerical-Linear} of Clarkson \& Woodruff
implies that the approximation \cref{eqn:Ahat-fixed}
can achieve the bound \cref{eqn:eps-subopt} for any $\eps > 0$,
provided that $k = \Theta(r/\eps^2)$ and $\ell = \Theta(k/\eps^2)$.
It is possible to improve this scaling; see~\cite[Thm.~5.1]{TYUC17:Randomized-Single-View-TR}.
\end{remark}

\begin{remark}[Spectral-Norm Error Bound]
It is possible to obtain an error bound for the rank-$r$ approximation \cref{eqn:Ahat-fixed}
with respect to the spectral norm by combining \cite[Thm.~4.2]{TYUC17:Randomized-Single-View-TR}
and \cref{rem:fixed-rank-spec}.
\end{remark}

\subsection{Fixed-Rank Conjugate Symmetric Approximation}

Assume that the input matrix $\mtx{A} \in \mathbb{H}^n$ is conjugate symmetric
and we wish to compute a rank-$r$ conjugate symmetric approximation.
First, form an initial approximation $\hat{\mtx{A}}_{\rm sym}$ using
the procedure~\cref{eqn:Ahat-sym-factored} in \cref{sec:sym-recon}.
Then compute an $r$-truncated eigenvalue decomposition of the matrix $\mtx{S}$
defined in~\cref{eqn:def-S}:
$$
\mtx{S} =: \mtx{V} \lowrank{\mtx{D}}{r} \mtx{V}^* \ +\ \textrm{approximation error}.
$$
In view of the representation~\cref{eqn:Ahat-sym-factored},
\begin{equation} \label{eqn:Ahat-sym-fixed}
\lowrank{\hat{\mtx{A}}_{\rm sym}}{r} = (\mtx{UV}) \lowrank{\mtx{D}}{r} (\mtx{UV})^*.
\end{equation}
\Cref{alg:sym-fixed-rank-recon} contains pseudocode for the
fixed-rank approximation~\cref{eqn:Ahat-sym-fixed}.
The total working storage is $\Theta(k n)$,
and the arithmetic cost is $\Theta(k \ell n)$.

If $\mtx{A}$ is conjugate symmetric, then \cref{cor:symm-recon,prop:fixed-rank-err}
shows that $\lowrank{\hat{\mtx{A}}_{\rm sym}}{r}$
admits an error bound identical to \cref{cor:fixed-rank-recon}.
We omit the details.

\begin{algorithm}[tb]
  \caption{\textsl{Fixed-Rank Symmetric Approximation.}  Implements~\cref{eqn:Ahat-sym-fixed}.  \label{alg:sym-fixed-rank-recon}}
  \begin{algorithmic}[1]
  \Require{Matrix dimensions $m = n$; target rank $r \leq k$}
    \Ensure{Returns factors $\mtx{U} \in \F^{n \times r}$ with orthonormal columns and diagonal $\mtx{D} \in \mathbb{H}^r$ that form a rank-$r$ conjugate symmetric approximation $\hat{\mtx{A}}_{\rm out} = \mtx{UDU}^*$ of the sketched matrix}\vspace{0.5pc}

	\Function{Sketch.FixedRankSymApprox}{$r$}
    \State	$(\mtx{U}, \mtx{S}) \gets \textsc{LowRankSymApprox}(\,)$
		 		 \Comment{Get $\hat{\mtx{A}}_{\rm in} = \mtx{USU}^*$}
	\State	$(\mtx{V}, \mtx{D}) \gets \texttt{eigs}(\mtx{S}, r, \texttt{'lm'})$
		\Comment{Truncate full eigendecomposition}
	\State	$\mtx{U} \gets \mtx{U} \mtx{V}$
		\Comment{Consolidate orthonormal factors}
	\State \Return{$(\mtx{U}, \mtx{D})$}
			\EndFunction
	\vspace{0.25pc}
\end{algorithmic}
\end{algorithm}

\subsection{Fixed-Rank PSD Approximation}

Assume that the input matrix $\mtx{A} \in \mathbb{H}_+^n$ is psd,
and we wish to compute a rank-$r$ psd approximation $\lowrank{\hat{\mtx{A}}_+}{r}$.
First, form an initial approximation $\hat{\mtx{A}}_{+}$ using
the procedure~\cref{eqn:Ahat-psd-factored} in \cref{sec:psd-recon}.
Then compute an $r$-truncated positive eigenvalue decomposition
of the matrix $\mtx{S}$ defined in~\cref{eqn:def-S}:
$$
\mtx{S} =: \mtx{V} \lowrank{\mtx{D}_+}{r} \mtx{V}^*
\ +\ \textrm{approximation error}.
$$
In view of the representation~\cref{eqn:Ahat-psd-factored},
\begin{equation} \label{eqn:Ahat-psd-fixed}
\lowrank{\hat{\mtx{A}}_{+}}{r} = (\mtx{UV}) \lowrank{\mtx{D}_+}{r} (\mtx{UV})^*.
\end{equation}
\Cref{alg:psd-fixed-rank-recon} contains pseudocode for the
fixed-rank psd approximation~\cref{eqn:Ahat-psd-fixed}.
The working storage is $\Theta(kn)$,
and the arithmetic cost is $\Theta(k \ell n)$.
If $\mtx{A}$ is psd, then \cref{cor:symm-recon,prop:fixed-rank-err}
show that $\lowrank{\hat{\mtx{A}}_{\rm psd}}{r}$ satisfies
an error bound identical to \cref{cor:fixed-rank-recon};
we omit the details.

\begin{algorithm}[tb]
  \caption{\textsl{Fixed-Rank PSD Approximation.}  Implements~\cref{eqn:Ahat-psd-fixed}.  \label{alg:psd-fixed-rank-recon}}
  \begin{algorithmic}[1]
  	\Require{Matrix dimensions $m = n$; target rank $r \leq k$}
    \Ensure{Returns factors $\mtx{U} \in \F^{n \times r}$ with orthonormal columns and nonnegative, diagonal $\mtx{D} \in \mathbb{H}_+^r$ that form a rank-$r$ psd approximation $\hat{\mtx{A}}_{\rm out} = \mtx{UDU}^*$ of the sketched matrix}\vspace{0.5pc}

	\Function{Sketch.FixedRankPSDApprox}{$r$}
    \State	$(\mtx{U},\mtx{S}) \gets \textsc{LowRankSymApprox}(\,)$
		 		 \Comment{Get $\hat{\mtx{A}}_{\rm in} = \mtx{USU}^*$}
	\State	$(\mtx{V}, \mtx{D}) \gets \texttt{eigs}(\mtx{S}, r, \texttt{'lr'})$
		\Comment{Truncate full eigendecomposition}
	\State	$\mtx{U} \gets \mtx{U} \mtx{V}$
				\Comment{Consolidate orthonormal factors}
	\State	$\mtx{D} \gets \texttt{max}(\mtx{D}, \texttt{0})$
		\Comment{Zero out negative eigenvalues}
	\State \Return{$(\mtx{U}, \mtx{D})$}
			\EndFunction

	\vspace{0.25pc}
\end{algorithmic}
\end{algorithm}

\section{Computational Experiments}
\label{sec:experiments}

This section presents the results of some numerical tests designed
to evaluate the empirical performance of our sketching algorithms
for low-rank matrix approximation.  We demonstrate that the approximation
quality improves when we impose structure, and we show that our theoretical
parameter choices are effective.
The presentation also includes comparisons with several other algorithms
from the literature.

\subsection{Overview of Experimental Setup}

For our numerical assessment, we work over the complex field ($\F = \C$).
Results for the real field ($\F = \R$) are similar.

Let us summarize the procedure
for studying the behavior of a specified
approximation method on a given input matrix.
Fix the input matrix $\mtx{A}$ and the target rank $r$.
Then select a pair $(k, \ell)$ of sketch size parameters where $k \geq r$ and $\ell \geq r$.

Each trial has the following form.
We draw (complex) standard normal test matrices $(\mtx{\Omega}, \mtx{\Psi})$
to form the sketch $(\mtx{Y}, \mtx{W})$ of the input matrix.
[We do not use the optional orthogonalization steps in \cref{alg:sketch}.]
Next compute an approximation $\hat{\mtx{A}}_{\rm out}$ of the matrix $\mtx{A}$
by means of a specified approximation algorithm.
Then calculate the error relative to the best rank-$r$ approximation:
\begin{equation} \label{eqn:relative-error}
\textrm{relative error} \quad:=\quad
\frac{\fnorm{ \mtx{A} - \hat{\mtx{A}}_{\rm out} }}{ \tau_{r+1}(\mtx{A}) }
 - 1.
\end{equation}
The tail energy $\tau_j$ is defined in~\eqref{eqn:tail-energy}.
If $\hat{\mtx{A}}_{\rm out}$ is a rank-$r$ approximation of $\mtx{A}$,
the relative error is always nonnegative.  To facilitate comparisons,
our experiments only examine fixed-rank approximation methods.

To obtain each data point, we repeat the procedure from the last paragraph 20 times,
each time with the same input matrix $\mtx{A}$
and an independent draw of the test matrices $(\mtx{\Omega}, \mtx{\Psi})$.
Then we report the average relative error over the 20 trials.

We include our \textsc{Matlab} implementations in the supplementary materials
for readers who seek more details on the methodology.

\subsection{Classes of Input Matrices}
\label{sec:input-matrix-examples}

We perform our numerical tests using several types of
complex-valued input matrices.
\Cref{fig:singVal} illustrates the singular spectrum
of a matrix from each of the categories.

\begin{figure}[tp!]
\begin{center}
\includegraphics[height=1.5in]{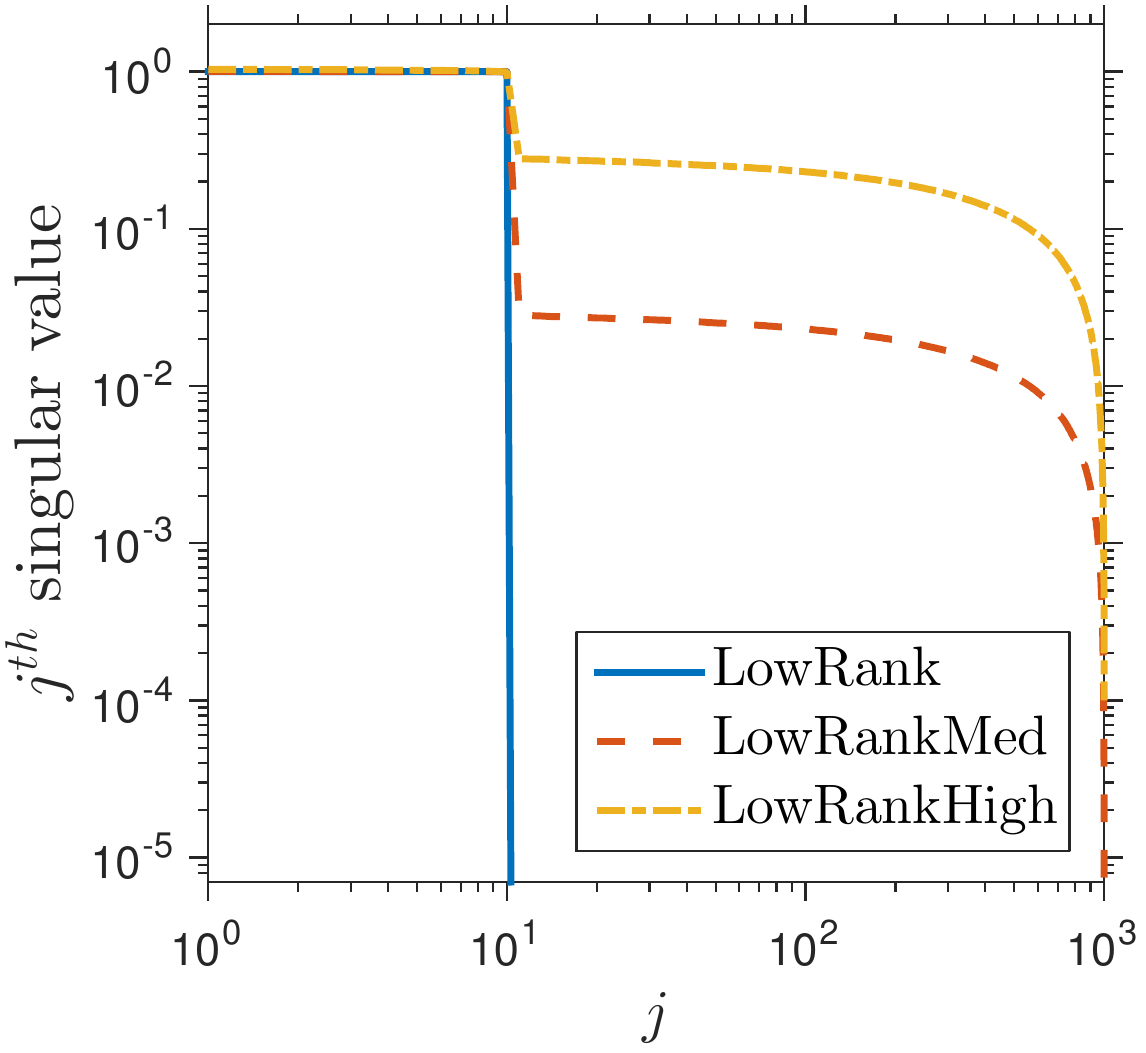} \hspace{0.5pc}
\includegraphics[height=1.5in]{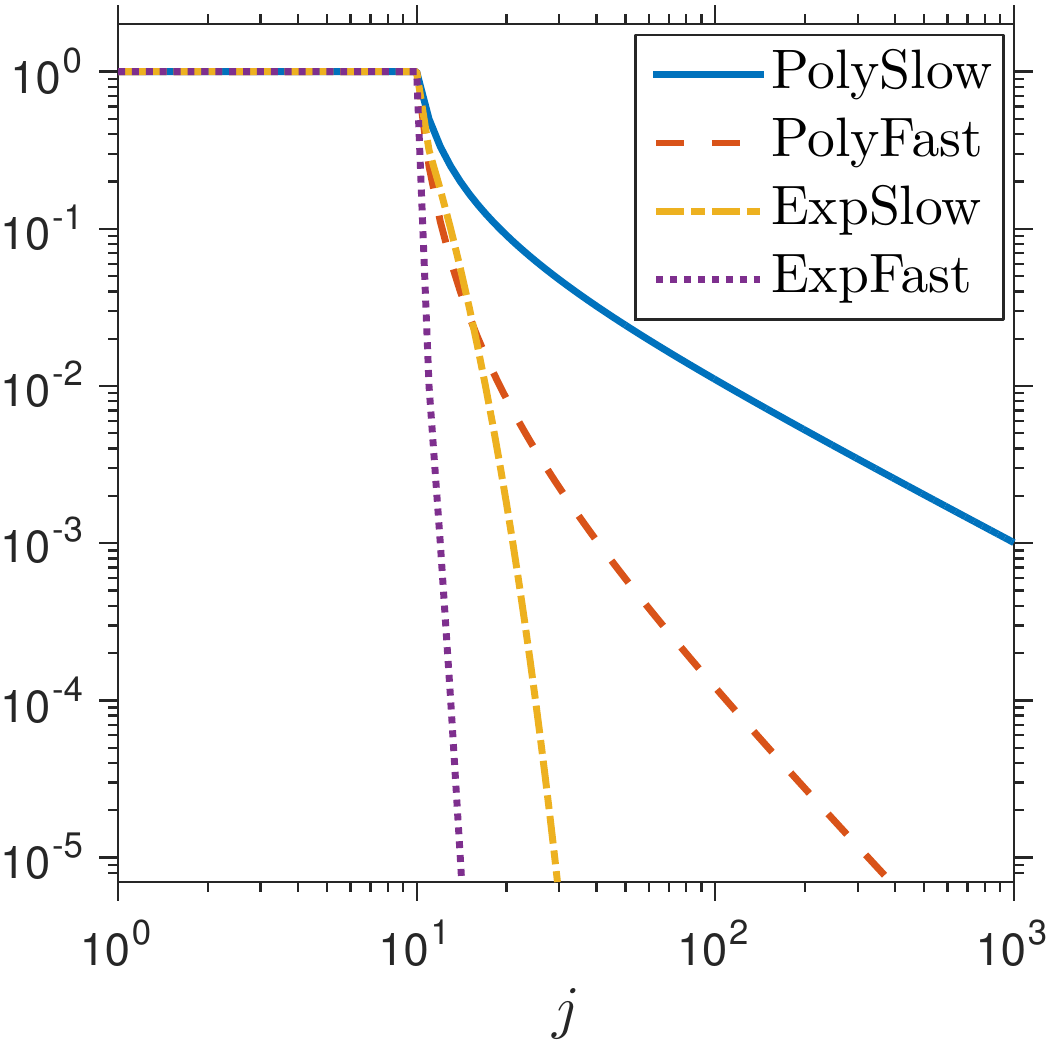} \hspace{0.5pc}
\includegraphics[height=1.5in]{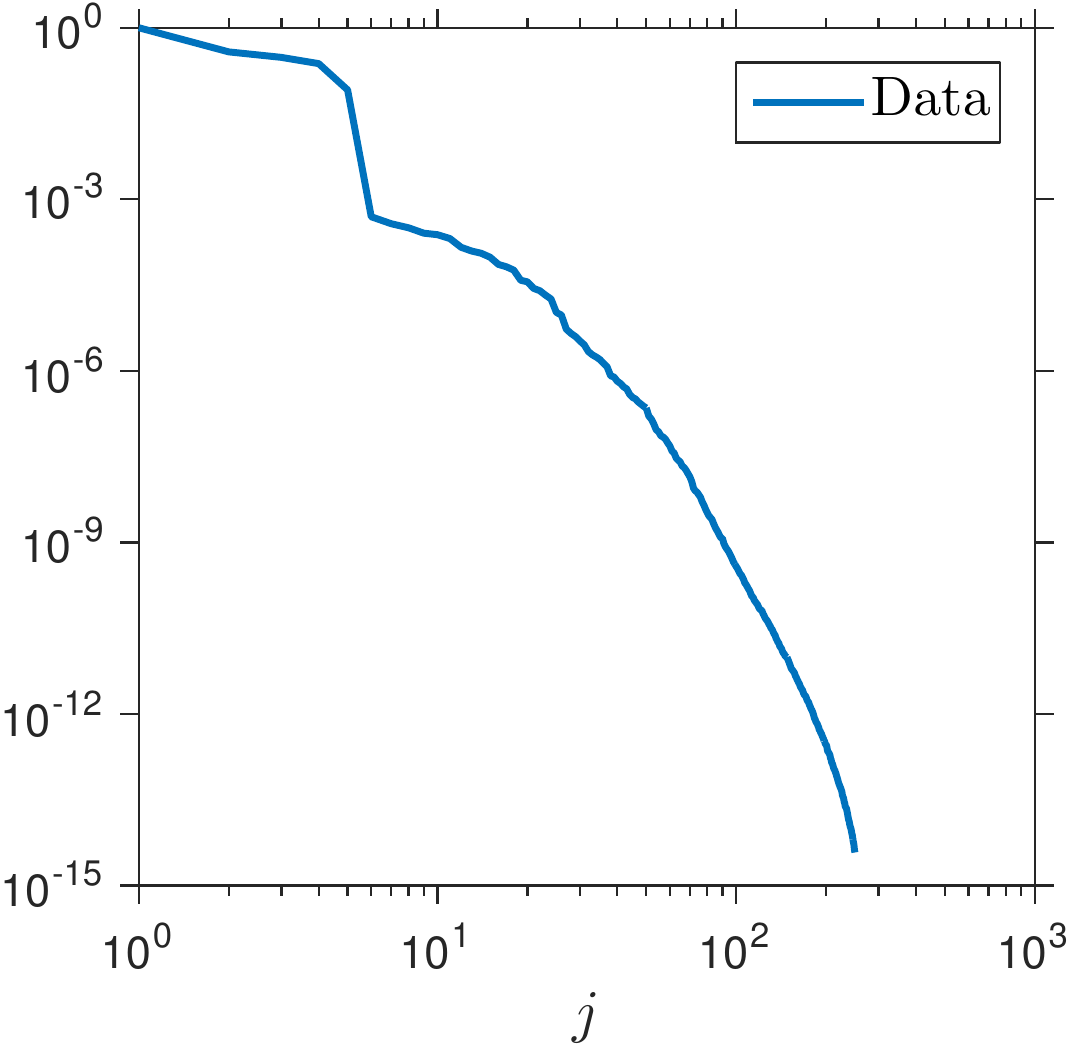}
\caption{\textbf{Spectra of input matrices.}
These plots display the singular value spectrum for an input
matrix from each of the classes (\texttt{LowRank}, \texttt{LowRankMedNoise}, \texttt{LowRankHiNoise},
\texttt{PolyDecaySlow}, \texttt{PolyDecayFast}, \texttt{ExpDecaySlow}, \texttt{ExpDecayFast}, \texttt{Data})
described in \cref{sec:input-matrix-examples}.}
\label{fig:singVal}
\end{center}
\end{figure}

\subsubsection{Synthetic Examples}

We fix a dimension parameter $n = 10^3$ and
a parameter $R = 10$ that controls the rank of
the ``significant part'' of the matrix.
In our experiments, we compute
approximations with target rank $r = 5$.
Similar results hold when the parameter $R = 5$
and when $n = 10^4$.

We construct the following synthetic input matrices:

\vspace{0.5pc}

\begin{enumerate}
\item	\textbf{Low-Rank + Noise:}  These matrices take the form
$$
\begin{bmatrix} \Id_R & \mtx{0} \\ \mtx{0} & \mtx{0} \end{bmatrix}
	\quad+\quad \sqrt{\frac{\gamma R}{2n^2}} (\mtx{G} + \mtx{G}^*)
	\quad\in\quad \C^{n \times n}.
$$
The matrix $\mtx{G}$ is complex standard normal.  The quantity $\gamma^{-2}$
can be interpreted as the signal-to-noise ratio (SNR).  We consider three cases:

\vspace{0.5pc}

\begin{enumerate}
\item	\textbf{No noise (\texttt{LowRank}):}  $\gamma = 0$.

\item	\textbf{Medium noise (\texttt{LowRankMedNoise}):}  $\gamma = 10^{-2}$.

\item	\textbf{High noise (\texttt{LowRankHiNoise}):}  $\gamma = 1$.
\end{enumerate}

\vspace{0.5pc}

\noindent
For these models, all the experiments are performed on a single exemplar
that is drawn at random and then fixed.

\vspace{0.5pc}

\item	\textbf{Polynomially Decaying Spectrum:}  These matrices take the form
$$
\diag\big( \underbrace{1,\ \dots,\ 1}_R,\ 2^{-p},\ 3^{-p},\ 4^{-p},\ \dots,\ (n-R+1)^{-p} \big) \quad\in\quad \C^{n \times n},
$$
where $p > 0$ controls the rate of decay.  We consider two cases:

\vspace{0.5pc}

\begin{enumerate}
\item	\textbf{Slow polynomial decay (\texttt{PolyDecaySlow}):} $p = 1$.
\item	\textbf{Fast polynomial decay (\texttt{PolyDecayFast}):} $p = 2$.
\end{enumerate}

\vspace{0.5pc}

\item	\textbf{Exponentially Decaying Spectrum:}  These matrices take the form
$$
\diag\big( \underbrace{1,\ \dots,\ 1}_R,\ 10^{-q},\ 10^{-2q},\ 10^{-3q},\ \dots,\ 10^{- (n-R) q} \big) \quad\in\quad \C^{n \times n},
$$
where $q > 0$ controls the rate of decay.  We consider two cases:

\vspace{0.5pc}

\begin{enumerate}
\item	\textbf{Slow exponential decay (\texttt{ExpDecaySlow}):} $q = 0.25$.
\item	\textbf{Fast exponential decay (\texttt{ExpDecayFast}):} $q = 1$.
\end{enumerate}
\end{enumerate}

\vspace{0.5pc}

We can focus on diagonal matrices because of the rotational invariance of the
test matrices $(\mtx{\Omega}, \mtx{\Psi})$.  Results for dense matrices
are similar.

\subsubsection{A Matrix from an Application in Optimization}
\label{eqn:cgm-matrix}

We also consider a dense, complex psd matrix (\texttt{Data})
obtained from a real-world phase retrieval application.
This matrix has dimension $n = 25,921$ and exact rank 250.
The first five singular values decrease from 1 to around 0.1;
there is a large gap between the fifth and sixth singular value;
the remaining nonzero singular values decay very fast.
See our paper~\cite{YUTC16:Sketchy-Decisions} for more details
about the role of sketching in this context.

\subsection{Alternative Sketching Algorithms for Matrix Approximation}
\label{sec:other-algs}

In addition to the algorithms we have presented,
our numerical study comprises other methods that have appeared in the literature.  We have modified
all of these algorithms to improve their numerical
stability and to streamline the computations.
To the extent possible, we adopt
the sketch \cref{eqn:sketches} for all the algorithms to
make their performance more comparable.

\subsubsection{Methods Based on the Sketch \cref{eqn:sketches}}

We begin with two additional methods that use
the same sketch~\cref{eqn:sketches} as our algorithms.

First, let us describe a variant of a fixed-rank approximation scheme that
was proposed by Woodruff~\cite[Thm.~4.3, display 2]{Woo14:Sketching-Tool}.
First, form a matrix product and compute its orthogonal--triangular factorization:
$\mtx{\Psi Q} =: \mtx{UT}$ where $\mtx{U} \in \F^{\ell \times k}$
has orthonormal columns.
Then construct the rank-$r$ approximation
\begin{equation} \label{eqn:woodruff-fixed}
\hat{\mtx{A}}_{\mathrm{woo}}
	:= \mtx{Q} \mtx{T}^{\dagger} \lowrank{\mtx{U}^*\mtx{W}}{r}.
\end{equation}
Woodruff shows that $\hat{\mtx{A}}_{\rm woo}$ satisfies \cref{eqn:eps-subopt}
when the sketch size scales as $k = \Theta(r/\eps)$
and $\ell = \Theta(k/\eps^2)$.
Compare this result with \cref{rem:fixed-rank-prior}.

Second, we outline a fixed-rank approximation method that
is implicit in Cohen et al.~\cite[Sec.~10.1]{CEM+15:Dimensionality-Reduction}.
First, compute the $r$ dominant left singular vectors
of the range sketch: $(\mtx{V}, \sim, \sim) := \texttt{svds}(\mtx{Y}, r)$.
Form a matrix product and compute its orthogonal--triangular factorization:
$\mtx{\Psi V} =: \mtx{UT}$ where $\mtx{U} \in \F^{\ell \times r}$.
Then form the rank-$r$ approximation
\begin{equation} \label{eqn:cohen-fixed}
\hat{\mtx{A}}_{\mathrm{cemmp}}
	:= \mtx{V} \mtx{T}^{\dagger} \lowrank{\mtx{U}^*\mtx{W}}{r}.
\end{equation}
The results in Cohen et al.~imply that $\hat{\mtx{A}}_{\mathrm{cemmp}}$
satisfies \cref{eqn:eps-subopt} when the sketch size scales
as $k = \Theta(r/\eps^2)$ and $\ell = \Theta(r/\eps^2)$.

The approximations \cref{eqn:woodruff-fixed,eqn:cohen-fixed} both appear
similar to our fixed-rank approximation, \cref{alg:fixed-rank-recon}. Nevertheless, they are derived from other principles,
and their behavior is noticeably different.

\subsubsection{A Method Based on an Extended Sketch}
\label{sec:optimal-alg}

Next, we present a variant of a recent approach that requires
a more complicated sketch and more elaborate computations.
The following procedure is adapted from \cite[Thm.~12]{BWZ16:Optimal-Principal-STOC},
using simplifications suggested in~\cite[Sec.~3]{Upa16:Fast-Space-Optimal}.

Let $\mtx{A} \in \F^{m \times n}$ be an input matrix,
and let $r$ be a target rank.  Choose integer parameters $k$ and $s$
that satisfy $r \leq k \leq s \leq \min\{m,n\}$.
For consistent notation, we also introduce a redundant parameter $\ell = k$.
Draw and fix \emph{four} test matrices:
\begin{equation} \label{eqn:fancy-test}
\mtx{\Psi} \in \F^{k \times m}; \quad
\mtx{\Omega} \in \F^{n \times \ell}; \quad
\mtx{\Phi} \in \F^{s \times m}; \quad\text{and}\quad
\mtx{\Xi} \in \F^{n \times s}.
\end{equation}
The matrices $(\mtx{\Psi}, \mtx{\Omega})$ are standard normal,
while $(\mtx{\Phi}, \mtx{\Xi})$ are SRFTs; see \cref{sec:distributions}.
The sketch now has \emph{three} components:
\begin{equation} \label{eqn:fancy-sketch}
\mtx{W} := \mtx{\Psi A}; \quad
\mtx{Y} := \mtx{A\Omega}; \quad\text{and}\quad
\mtx{Z} := \mtx{\Phi A \Xi}.
\end{equation}
To store the test matrices and the sketch, we need
$(2k + 1)(m + n) + s(s+2)$ numbers.

To obtain a rank-$r$ approximation of the input matrix $\mtx{A}$,
first compute four thin orthogonal--triangular factorizations:
$$
\begin{aligned}
\mtx{Y} &=: \mtx{Q}_1 \mtx{R}_1
&& \quad\text{and}\quad &
\mtx{W} &=: \mtx{R}_2^* \mtx{Q}_2^*; \\
\mtx{\Phi Q}_1 &=: \mtx{U}_1 \mtx{T}_1
&& \quad\text{and}\quad &
\mtx{Q}_2^* \mtx{\Xi} &=: \mtx{T}_2^* \mtx{U}_2^*.
\end{aligned}
$$
Then construct the rank-$r$ approximation
\begin{equation} \label{eqn:bwz}
\hat{\mtx{A}}_{\mathrm{bwz}} :=
	\mtx{Q}_1 \mtx{T}_1^{\dagger} \lowrank{\mtx{U}_1^* \mtx{Z} \mtx{U}_2}{r}
	(\mtx{T}_2^{*})^{\dagger} \mtx{Q}_2^*.
\end{equation}
By adapting and correcting~\cite[Thm.~12]{BWZ16:Optimal-Principal-STOC},
one can show that $\hat{\mtx{A}}_{\mathrm{bwz}}$ achieves \cref{eqn:eps-subopt}
for sketch size parameters that satisfy $k = \Theta(r/\eps)$
and $s = \Theta((r\log(1+ r))^2 / \eps^6)$.
With this scaling, the total storage cost for the random matrices
and the sketch is $\Theta((m+n)r/\eps + (r\log(1+r))^2 / \eps^6)$.

The authors of \cite{BWZ16:Optimal-Principal-STOC} refer to their method
as ``optimal'' because the scaling of the term $(m+n) r/\eps$ in the storage cost
cannot be improved~\cite[Thm.~4.10]{CW09:Numerical-Linear}.
Nevertheless, because of the $\eps^{-6}$ term, the bound is
incomparable with the storage costs achieved by other algorithms.

\subsection{Performance with Oracle Parameter Choices}
\label{sec:oracle-params}

It is challenging to compare the relative performance of
sketching algorithms for matrix approximation
because of the theoretical nature of previous research.
In particular, earlier work does not offer any practical
guidance for selecting the sketch size parameters.

The only way to make a fair comparison is to study
the \emph{oracle performance} of the algorithms.
That is, for each method, we fix the total storage,
and we determine the minimum relative error that the
algorithm can achieve.  This approach allows us to
see which techniques are most promising for further
development.  Nevertheless, we must emphasize that
the oracle performance is not achievable in practice.

\subsubsection{Computing the Oracle Error}
\label{sec:oracle-error}

It is straightforward to compare our fixed-rank approximation methods, \cref{alg:fixed-rank-recon,alg:sym-fixed-rank-recon,alg:psd-fixed-rank-recon},
with the alternatives \cref{eqn:woodruff-fixed,eqn:cohen-fixed} from the literature.
In each case, the sketch \cref{eqn:sketches} requires storage
of $(k + \ell)(m + n)$ numbers, so we can parameterize
the cost by $T := k + \ell$.  For a given choice of $T$,
we obtain the oracle performance by minimizing
the empirical approximation error for each
algorithm over all pairs $(k, \ell)$ where the sum $k + \ell = T$.

It is trickier to include the Boutsidis et al.~\cite[Thm.~12]{BWZ16:Optimal-Principal-STOC} method~\cref{eqn:bwz}.
For a given $T$, we obtain the oracle performance of \cref{eqn:bwz}
by minimizing the empirical approximation error
over pairs $(k, s)$ for which the storage cost of
the sketch \cref{eqn:fancy-sketch}
matches the cost of the simple sketch \cref{eqn:sketches}.
That is, $(2k+1)(m+n) + s(s+2) \approx T(m + n)$.

\subsubsection{Numerical Comparison with Prior Work}

For each input matrix described in \cref{sec:input-matrix-examples},
\cref{fig:oracle-performance} compares the oracle performance of
our fixed-rank approximation, \cref{alg:fixed-rank-recon},
against several alternative methods \cref{eqn:woodruff-fixed,eqn:cohen-fixed,eqn:bwz}
from the literature.  We make the following observations:

\vspace{0.5pc}

\begin{itemize} \setlength{\itemsep}{0.5pc}
\item	For matrices that are well-approximated by a low-rank matrix
(\texttt{LowRank}, \texttt{PolyDecayFast}, \texttt{ExpDecaySlow}, \texttt{ExpDecayFast}, \texttt{Data}),
our fixed-rank approximation, \cref{alg:fixed-rank-recon}, dominates
all other methods when the storage budget is adequate.
In particular, for the rank-1 approximation of the matrix
\texttt{Data}, our approach achieves relative errors 3--6 orders of magnitude
better than any competitor.

\item	When we consider matrices that are poorly approximated by a low-rank matrix
(\texttt{LowRankMedNoise}, \texttt{LowRankHiNoise}, \texttt{PolyDecaySlow}), the recent method \cref{eqn:bwz}
of Boutsidis et al.~\cite[Thm.~12]{BWZ16:Optimal-Principal-STOC} has the best performance, especially when the storage budget is small.
But see \cref{sec:structured-approx} for more texture.

\item	Our method, \cref{alg:fixed-rank-recon}, performs reliably
for all of the input matrices, and it is the only method that can achieve
high accuracy for the matrix \texttt{Data}.  Its behavior is less impressive for matrices that
have poor low-rank approximations (\texttt{LowRankMedNoise}, \texttt{LowRankHiNoise}, \texttt{PolyDecaySlow}),
but it is still competitive for these examples.

\item	The method \cref{eqn:bwz} of Boutsidis et al.~\cite[Thm.~12]{BWZ16:Optimal-Principal-STOC} offers mediocre performance for
matrices with good low-rank approximations (\texttt{LowRank}, \texttt{ExpDecaySlow}, \texttt{ExpDecayFast}, \texttt{Data}).
Strikingly, this approach fails to produce a high-accuracy rank-5 approximation
of the rank-10 matrix \texttt{LowRank}, even with a large storage budget.

\item	The method \cref{eqn:woodruff-fixed} of Woodruff~\cite[Thm.~4.3, display 2]{Woo14:Sketching-Tool}
is competitive for most synthetic examples, but it performs rather poorly on the matrix \texttt{Data}.

\item	The method \cref{eqn:cohen-fixed} of Cohen et al.~\cite[Sec.~10.1]{CEM+15:Dimensionality-Reduction}
has the worst performance for almost all the examples.
\end{itemize}

\vspace{0.5pc}

In summary, \cref{alg:fixed-rank-recon} has the best all-around behavior, while
the Boutsidis et al.~\cite[Thm.~12]{BWZ16:Optimal-Principal-STOC} method~\cref{eqn:bwz}
works best for matrices that have
a poor low-rank approximation.  See \cref{sec:recommendations} for more discussion.

\subsubsection{Structured Approximations}
\label{sec:structured-approx}

In this section, we investigate the effect of imposing structure
on the low-rank approximations.   \Cref{fig:oracle-structured}
compares the oracle performance of our fixed-rank approximation
methods, \cref{alg:fixed-rank-recon,alg:sym-fixed-rank-recon,alg:psd-fixed-rank-recon}.
We make the following observations:

\vspace{0.5pc}

\begin{itemize} \setlength{\itemsep}{0.5pc}
\item	The symmetric approximation method, \cref{alg:sym-fixed-rank-recon},
and the psd approximation method, \cref{alg:psd-fixed-rank-recon}, are very
similar to each other for all examples.

\item	The structured approximations, \cref{alg:sym-fixed-rank-recon,alg:psd-fixed-rank-recon},
always improve on the unstructured approximation, \cref{alg:fixed-rank-recon}.
The benefit is most significant for matrices
that have a poor low-rank approximation
(\texttt{LowRankMedNoise}, \texttt{LowRankHiNoise}, \texttt{PolyDecaySlow}).

\item	\Cref{alg:sym-fixed-rank-recon,alg:psd-fixed-rank-recon} match or
exceed the performance of the Boutsidis et al.~\cite[Thm.~12]{BWZ16:Optimal-Principal-STOC}
method \cref{eqn:bwz} for all examples.
\end{itemize}

\vspace{0.5pc}

In summary, if we know that the input matrix has structure, we can achieve
a decisive advantage by enforcing the structure in the approximation.

\subsection{Performance with Theoretical Parameter Choices}
\label{sec:theory-params}

It remains to understand how closely we can match the oracle performance
of \cref{alg:fixed-rank-recon,alg:sym-fixed-rank-recon,alg:psd-fixed-rank-recon}
in practice.  To that end, we must choose the sketch size parameters \emph{a priori}
using only the knowledge of the target rank $r$ and the total sketch size $T$.
In some instances, we may also have insight about the spectral decay of the
input matrix.
\Cref{fig:theory-params} shows how the fixed-rank approximation method, \cref{alg:fixed-rank-recon},
performs with the theoretical parameter choices outlined in \cref{sec:best-parameters}.
We make the following observations:

\vspace{0.5pc}

\begin{itemize} \setlength{\itemsep}{0.5pc}
\item	The parameter recommendation \cref{eqn:flat-params}, designed for a matrix
with a flat spectral tail, works well for the matrices
\texttt{LowRankMedNoise}, \texttt{LowRankHiNoise}, and \texttt{PolyDecaySlow}.
We also learn that this parameter choice should not be used for matrices with spectral decay.

\item	The parameter recommendation \cref{eqn:decay-params}, for a matrix
with a slowly decaying spectrum, is suited to the examples
\texttt{LowRankMedNoise}, \texttt{LowRankHiNoise}, \texttt{PolyDecaySlow}, and \texttt{PolyDecayFast}.
This parameter choice is effective for the remaining examples as well.

\item	The parameter recommendation \cref{eqn:fast-decay-params}, for a matrix
with a rapidly decaying spectrum, is appropriate for the examples
\texttt{PolyDecayFast}, \texttt{ExpDecaySlow}, \texttt{ExpDecayFast}, and \texttt{Data}.
This choice must not be used unless the spectrum decays quickly.

\item	We have observed that the same parameter recommendations allow us to achieve
near-oracle performance for the structured matrix approximations,
\cref{alg:sym-fixed-rank-recon,alg:psd-fixed-rank-recon}.  As in the unstructured
case, it helps if we tune the parameter choice to the type of input matrix.

\end{itemize}

\vspace{0.5pc}

In summary, we always achieve reasonably good performance using
the parameter choice \cref{eqn:decay-params}.  Furthermore,
if we match the parameter selection \cref{eqn:flat-params,eqn:decay-params,eqn:fast-decay-params}
to the spectral properties of the
input matrix, we can almost achieve the oracle performance in practice.

\subsection{Recommendations}
\label{sec:recommendations}

Among the fixed-rank approximation methods that we studied,
the most effective are \cref{alg:fixed-rank-recon,alg:sym-fixed-rank-recon,alg:psd-fixed-rank-recon}
and the Boutsidis et al.~\cite[Thm.~12]{BWZ16:Optimal-Principal-STOC} method \cref{eqn:bwz}.
Let us make some final observations based on our numerical experience.

\Cref{alg:fixed-rank-recon,alg:sym-fixed-rank-recon,alg:psd-fixed-rank-recon}
are superior to methods from the literature for input matrices that have
good low-rank approximations.  Although \cref{alg:fixed-rank-recon} suffers
when the input matrix has a poor low-rank approximation, the structured
variants, \cref{alg:sym-fixed-rank-recon,alg:psd-fixed-rank-recon}, match or
exceed other algorithms for all the examples we tested.  We have also
established that we can attain near-oracle performance for our methods
using the \emph{a priori} parameter recommendations from
\cref{sec:best-parameters}. Finally, our methods are simple and easy to
implement.

The Boutsidis et al.~\cite[Thm.~12]{BWZ16:Optimal-Principal-STOC} method \cref{eqn:bwz} exhibits the best performance
for matrices that have very poor low-rank approximations when the storage
budget is very small.  This benefit is diminished by its mediocre performance
for matrices that do admit good low-rank approximations.
The method \cref{eqn:bwz} requires more complicated sketches and additional computation.
Unfortunately, the analysis in~\cite{BWZ16:Optimal-Principal-STOC}
does not provide guidance on implementation.

In conclusion, we recommend the sketching methods,
\cref{alg:fixed-rank-recon,alg:sym-fixed-rank-recon,alg:psd-fixed-rank-recon},
for computing structured low-rank approximations.
In future research, we will try to design new methods
that simultaneously dominate our algorithms and \cref{eqn:bwz}.

\begin{figure}[htp!]
\begin{center}
\begin{subfigure}{.325\textwidth}
\begin{center}
\includegraphics[height=1.5in]{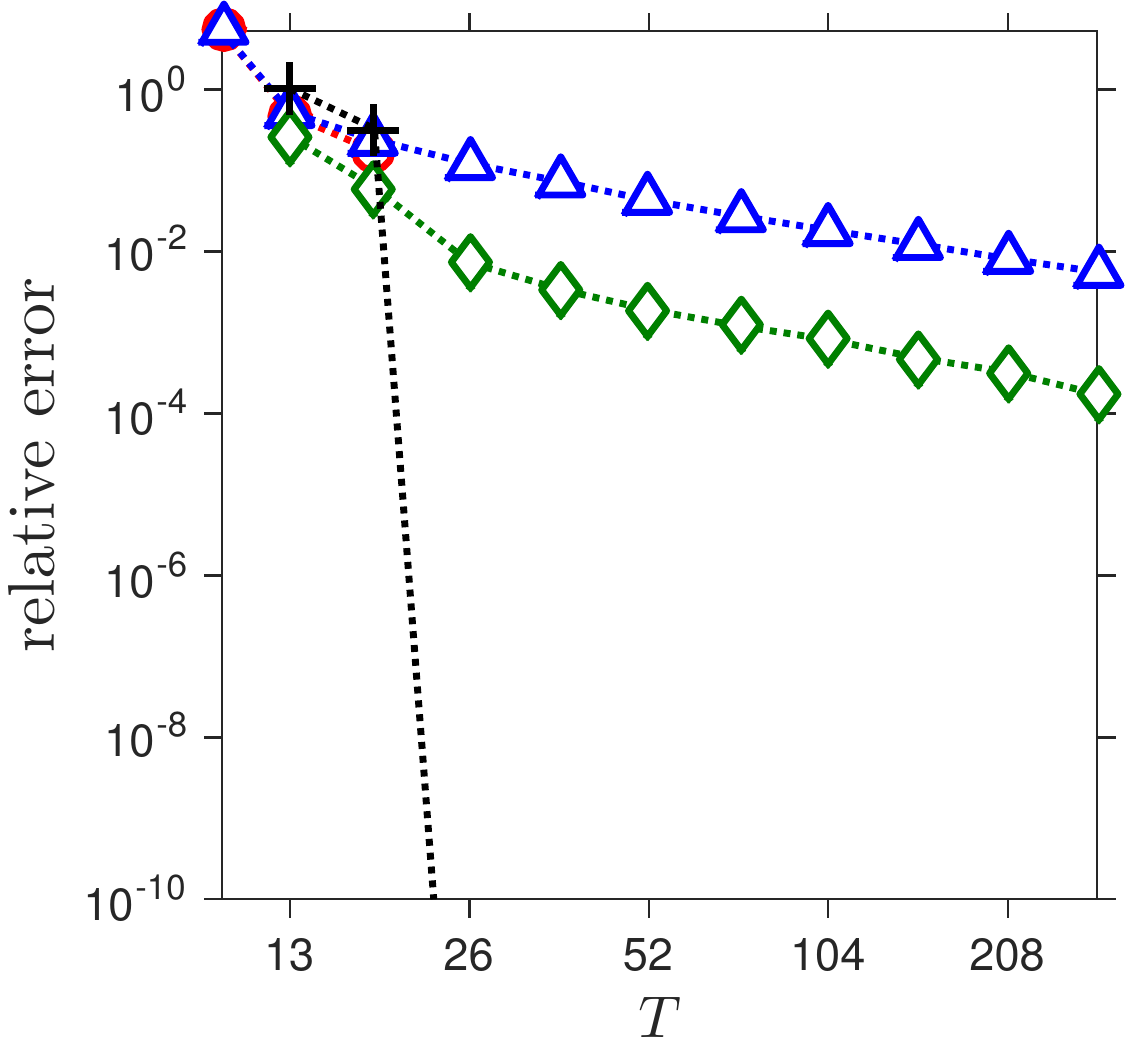}
\caption{\texttt{LowRank}}
\label{fig:LR-algs}
\end{center}
\end{subfigure}
\begin{subfigure}{.325\textwidth}
\begin{center}
\includegraphics[height=1.5in]{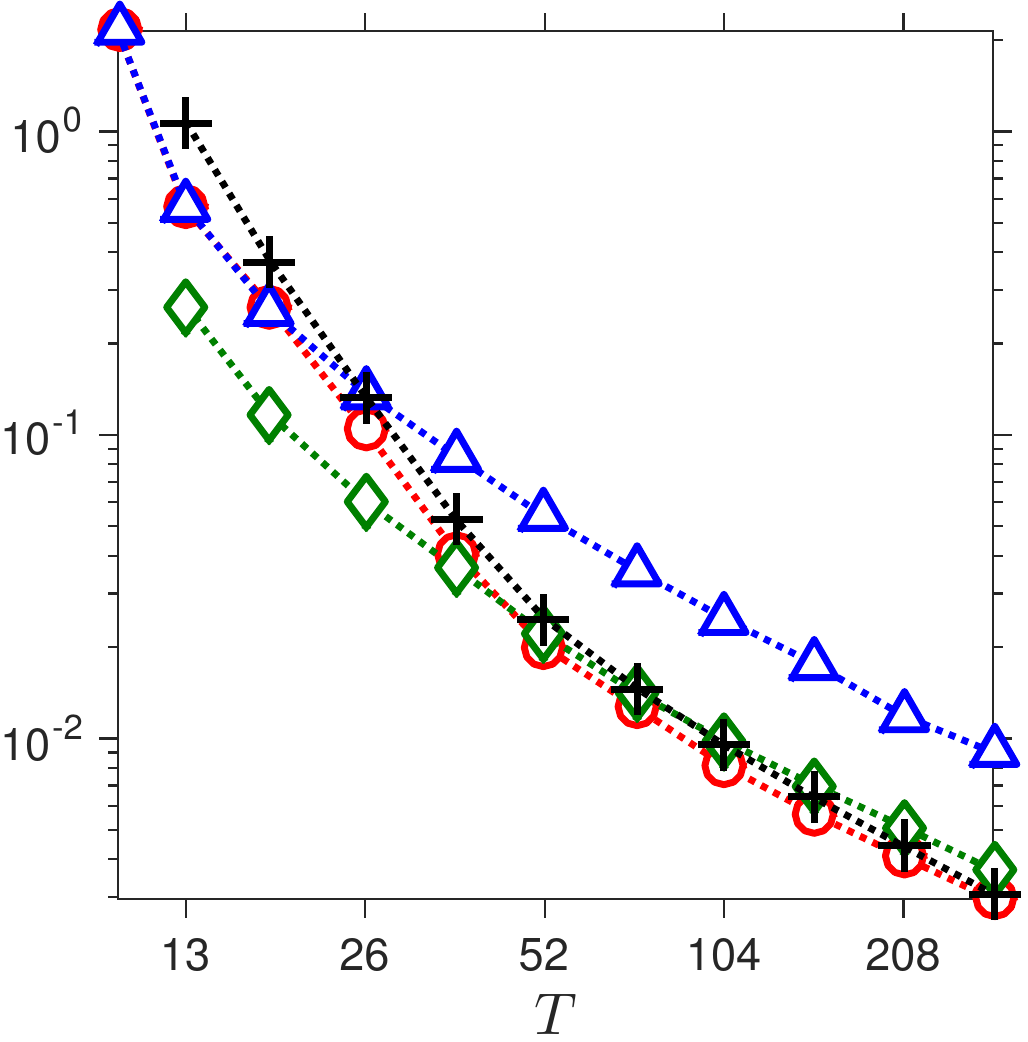}
\caption{\texttt{LowRankMedNoise}}
\label{fig:MED-algs}
\end{center}
\end{subfigure}
\begin{subfigure}{.325\textwidth}
\begin{center}
\includegraphics[height=1.5in]{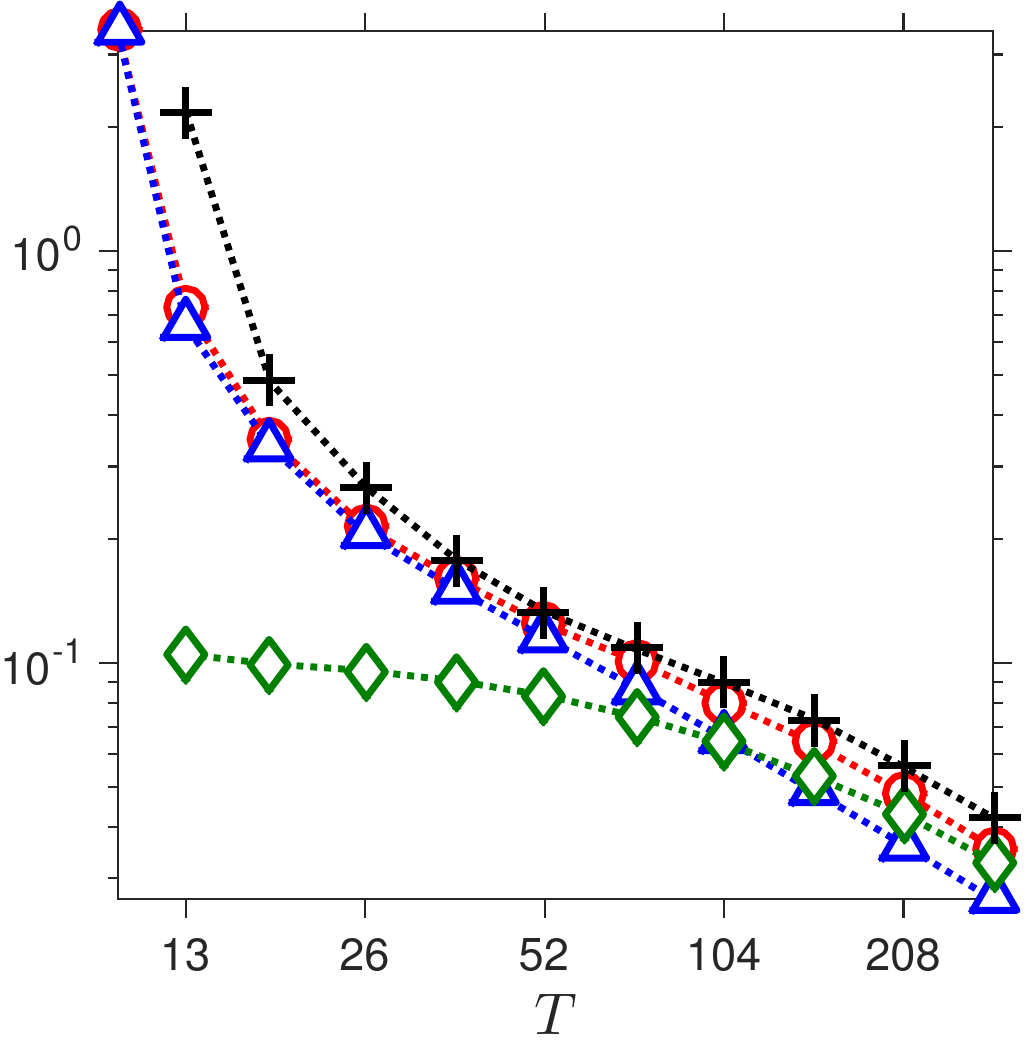}
\caption{\texttt{LowRankHiNoise}}
\label{fig:HI-algs}
\end{center}
\end{subfigure}
\end{center}

\vspace{.5em}

\begin{center}
\begin{subfigure}{.325\textwidth}
\begin{center}
\includegraphics[height=1.5in]{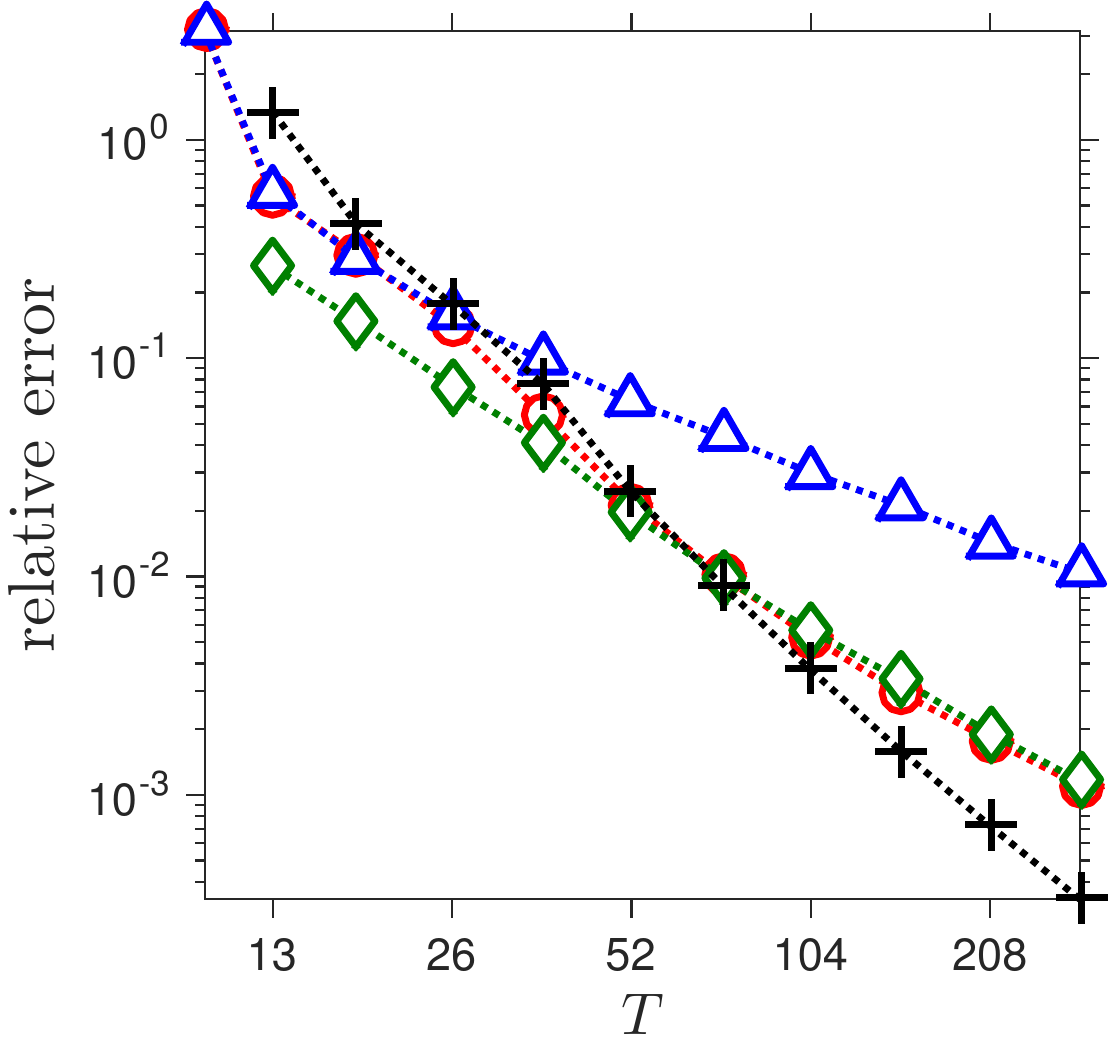}
\caption{\texttt{PolyDecaySlow}}
\label{fig:PSLOW-algs}
\end{center}
\end{subfigure}
\begin{subfigure}{.325\textwidth}
\begin{center}
\includegraphics[height=1.5in]{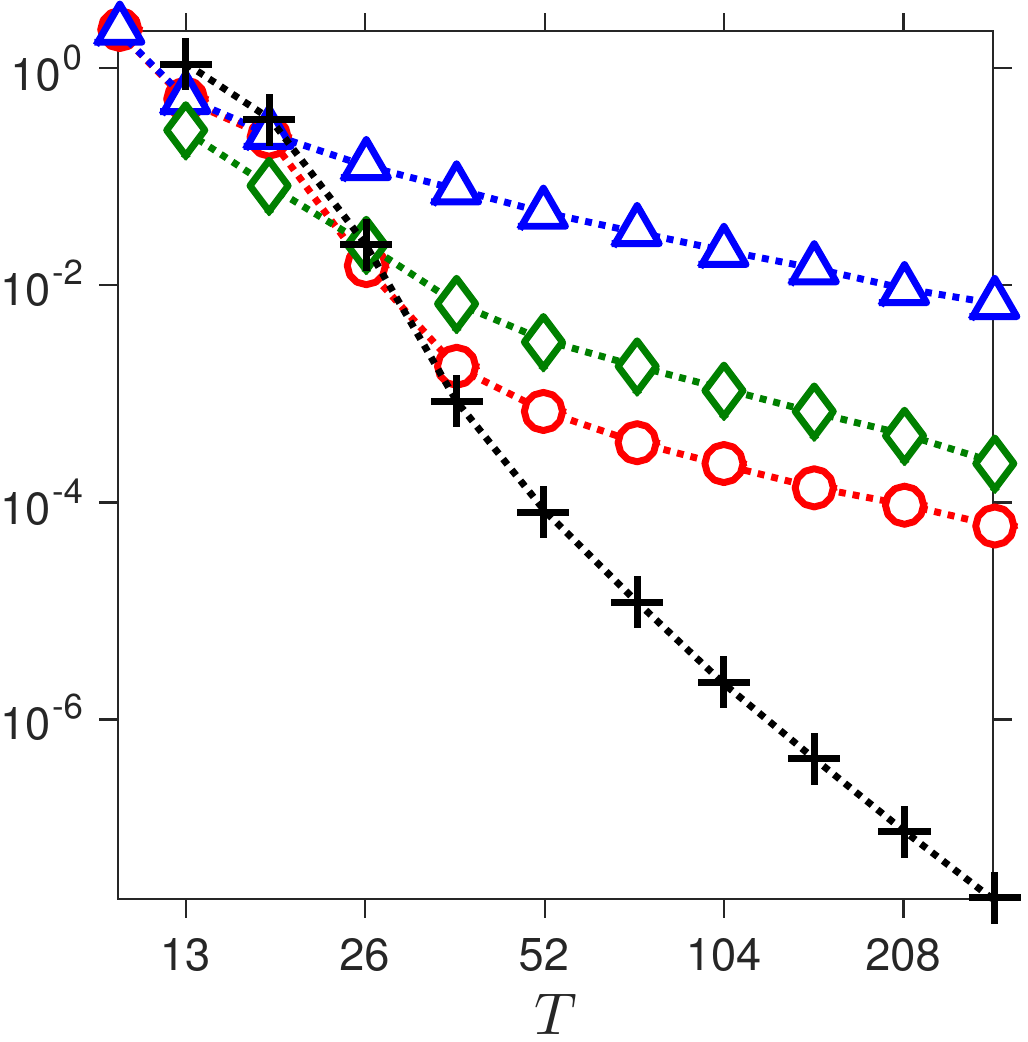}
\caption{\texttt{PolyDecayFast}}
\label{fig:PFAST-algs}
\end{center}
\end{subfigure}
\begin{subfigure}{.325\textwidth}
\begin{center}
\includegraphics[height=1.5in]{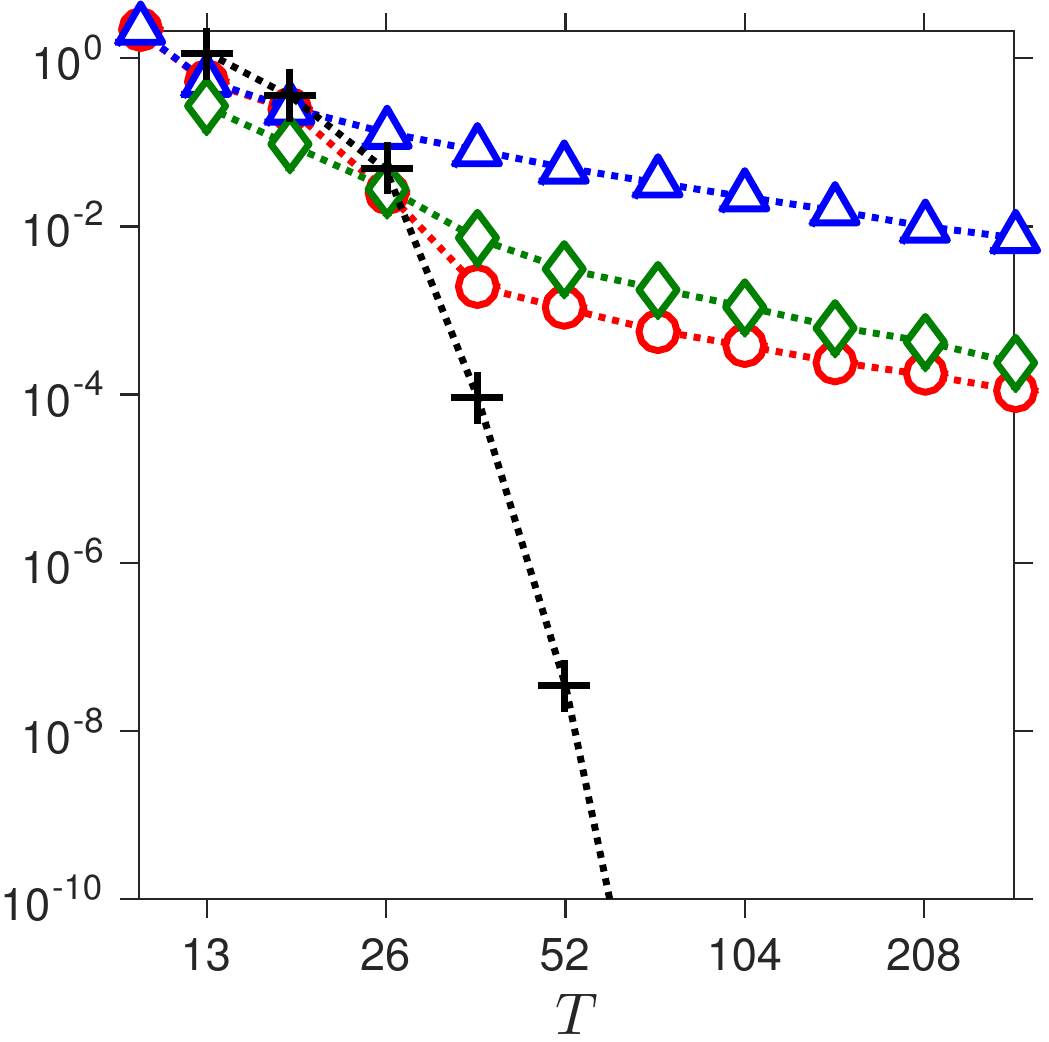}
\caption{\texttt{ExpDecaySlow}}
\label{fig:ESLOW-algs}
\end{center}
\end{subfigure}
\end{center}

\vspace{0.5em}

\begin{center}
\begin{subfigure}{.325\textwidth}
\begin{center}
\includegraphics[height=1.5in]{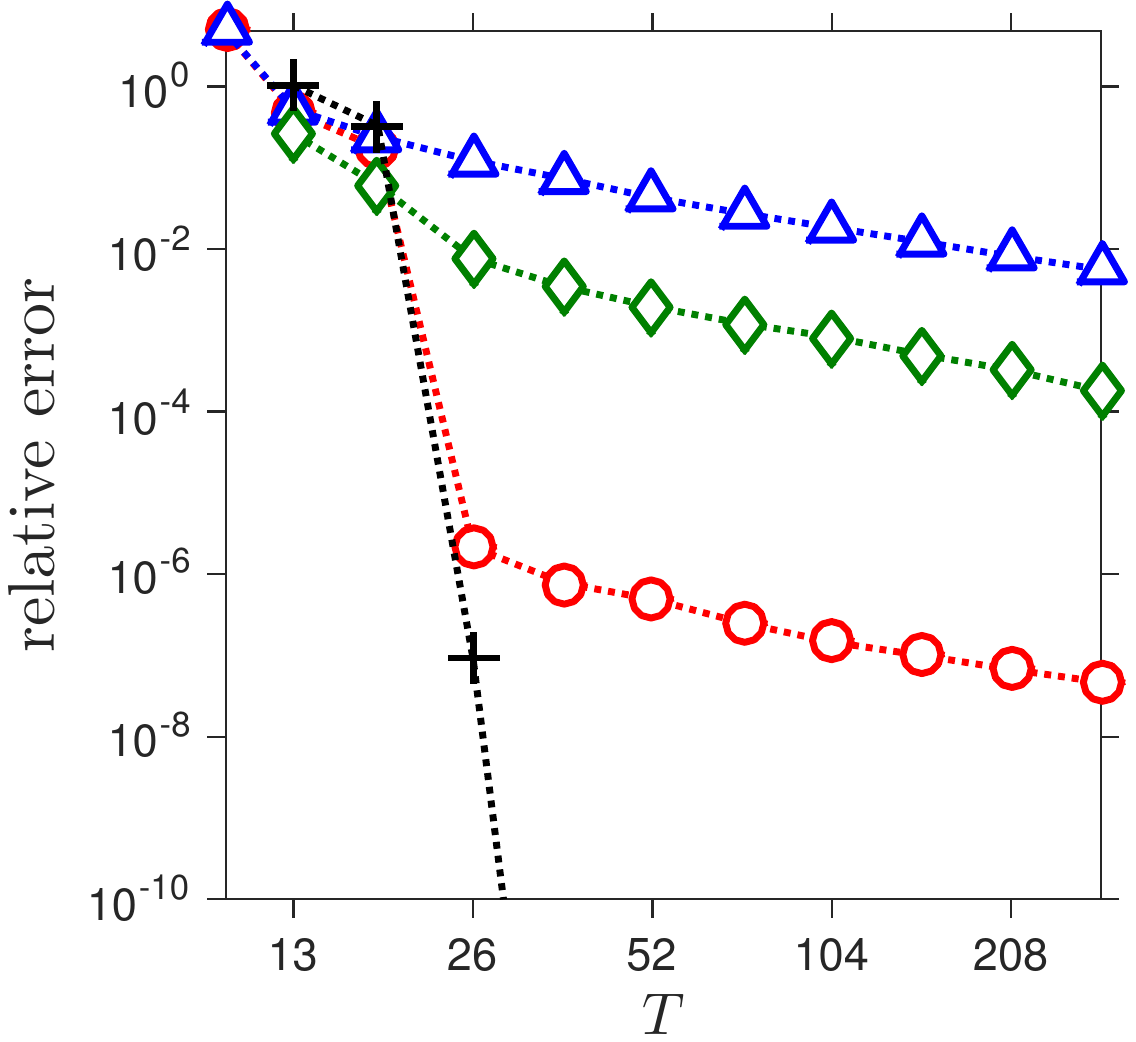}
\caption{\texttt{ExpDecayFast}}
\label{fig:EFAST-algs}
\end{center}
\end{subfigure}
\begin{subfigure}{.325\textwidth}
\begin{center}
\includegraphics[height=1.5in]{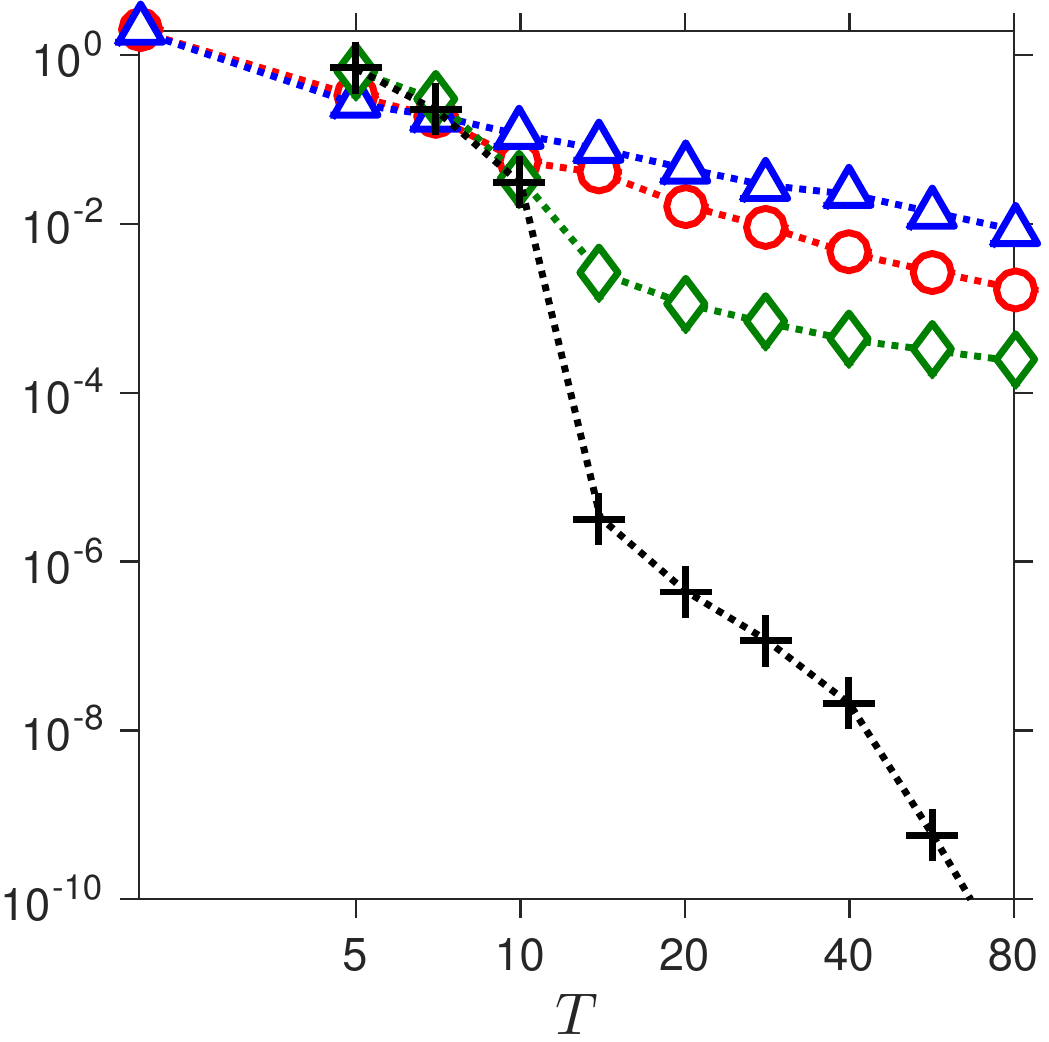}
\caption{\texttt{Data} ($r = 1$)}
\label{fig:DATA1-algs}
\end{center}
\end{subfigure}
\begin{subfigure}{.325\textwidth}
\begin{center}
\includegraphics[height=1.5in]{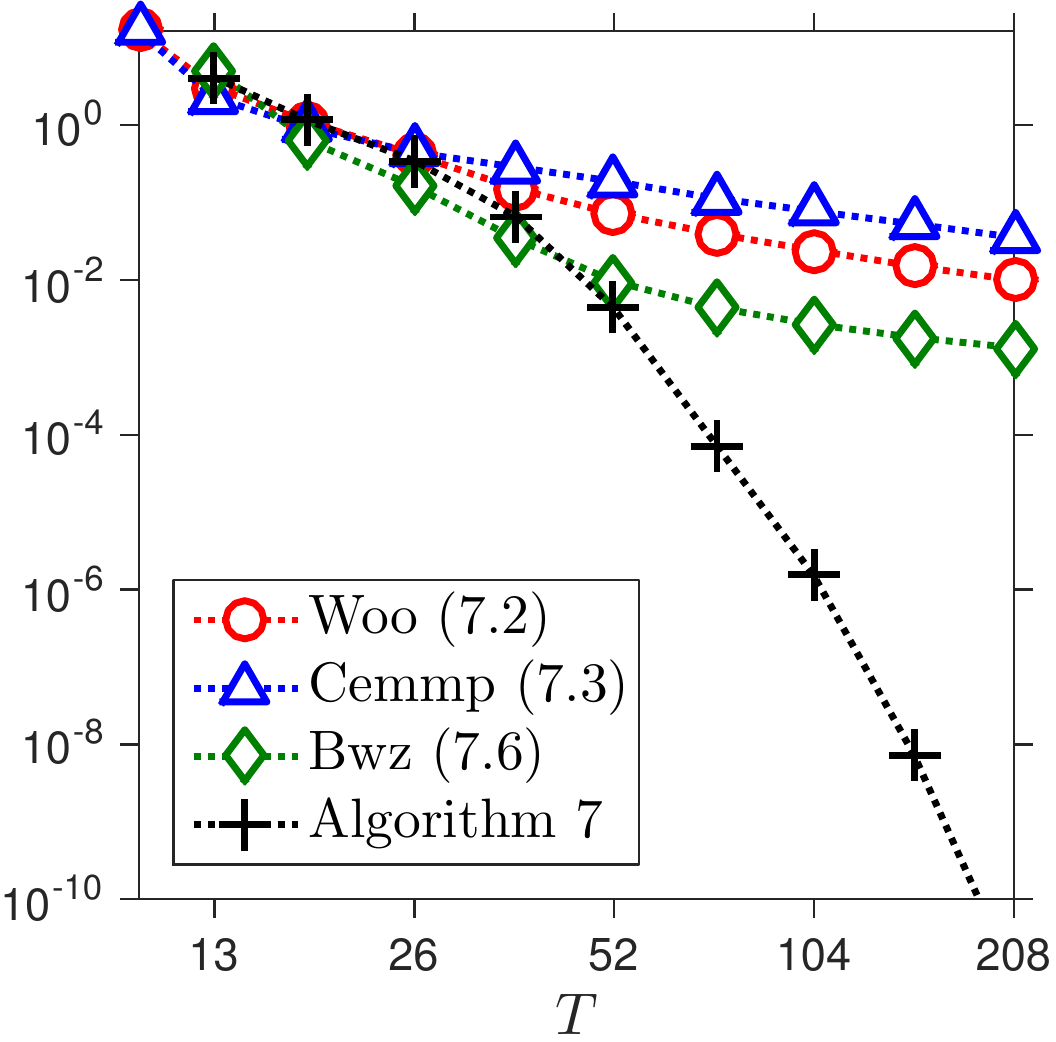}
\caption{\texttt{Data} ($r = 5$)}
\label{fig:DATA5-algs}
\end{center}
\end{subfigure}
\end{center}

\vspace{0.5em}

\caption{\textbf{Oracle performance of sketching algorithms for fixed-rank matrix approximation
as a function of storage cost.}
For each of the input matrices described in \cref{sec:input-matrix-examples},
we compare the oracle performance of our fixed-rank approximation,
\cref{alg:fixed-rank-recon}, against alternative methods
\cref{eqn:woodruff-fixed,eqn:cohen-fixed,eqn:bwz} from the literature.
The matrix dimensions are $m = n = 10^3$ for the synthetic examples
and $m = n = 25,921$ for the matrix \texttt{Data}
from the phase retrieval application.  Each approximation has rank $r = 5$, unless otherwise stated.
The variable $T$ on the horizontal axis is (proportional to) the total storage used by each
sketching method.  Each data series displays the best relative error
\cref{eqn:relative-error} that the specified algorithm can achieve with storage $T$.
See \cref{sec:oracle-error} for details.}
\label{fig:oracle-performance}
\end{figure}

\begin{figure}[htp!]
\begin{center}
\begin{subfigure}{.325\textwidth}
\begin{center}
\includegraphics[height=1.5in]{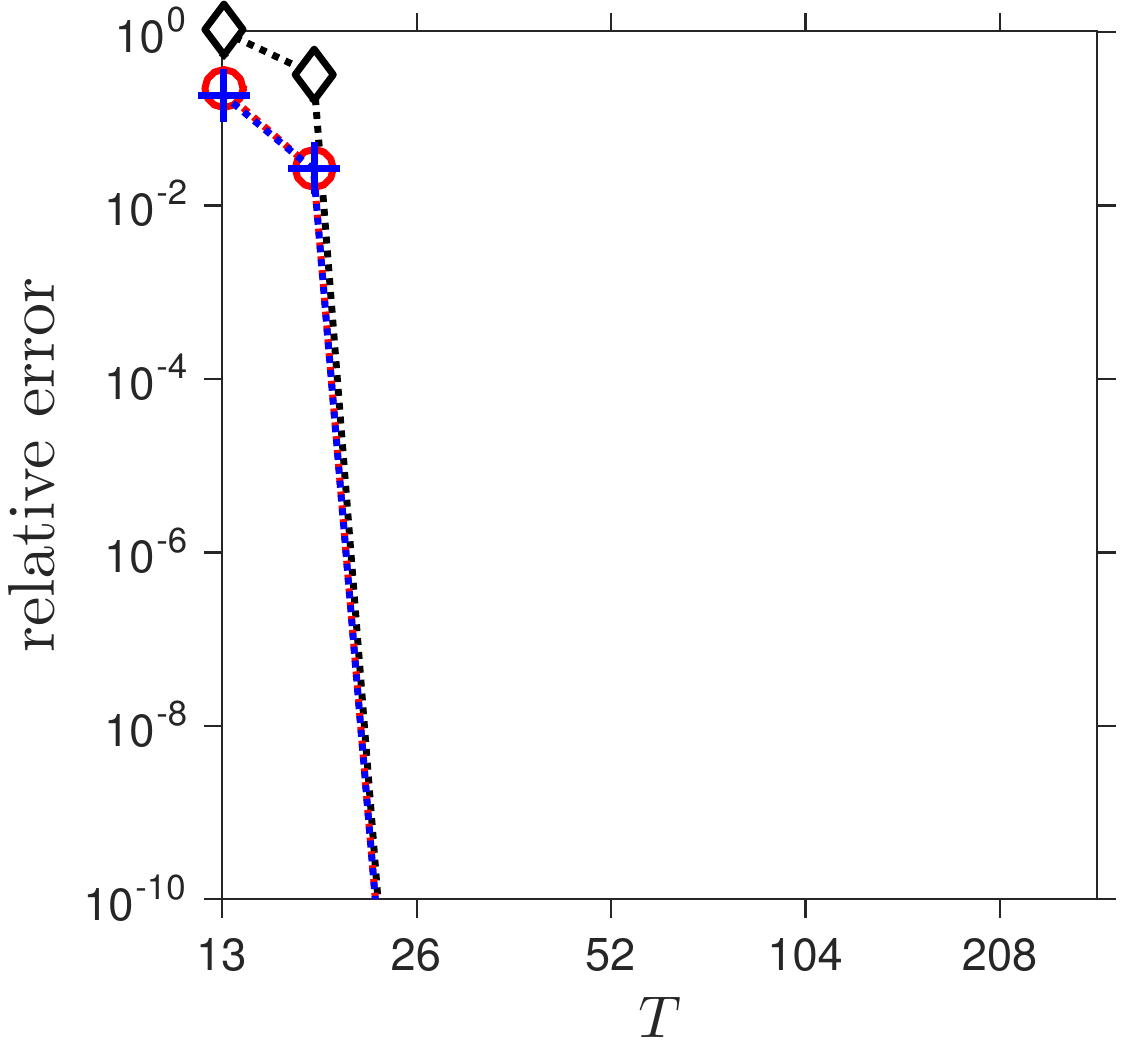}
\caption{\texttt{LowRank}}
\label{fig:LR-789}
\end{center}
\end{subfigure}
\begin{subfigure}{.325\textwidth}
\begin{center}
\includegraphics[height=1.5in]{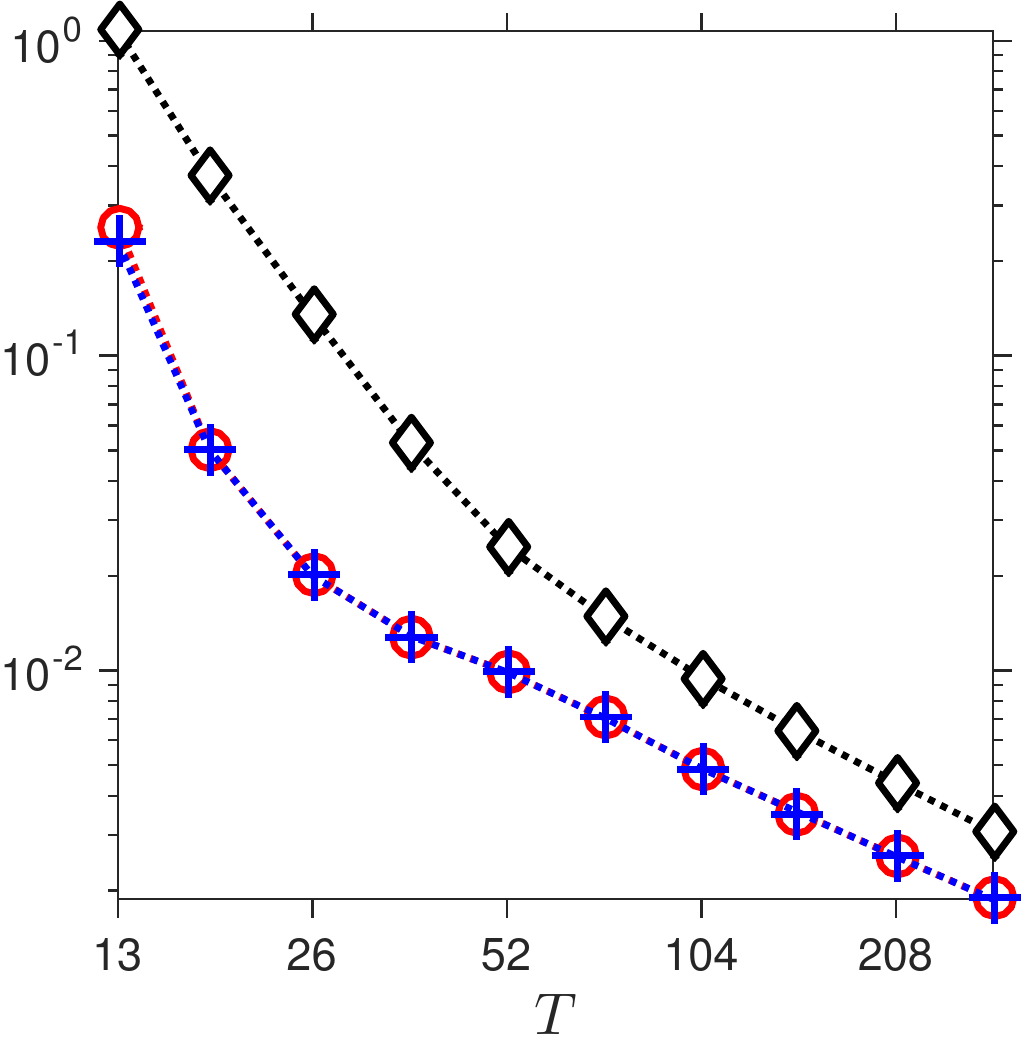}
\caption{\texttt{LowRankMedNoise}}
\label{fig:MED-789}
\end{center}
\end{subfigure}
\begin{subfigure}{.325\textwidth}
\begin{center}
\includegraphics[height=1.5in]{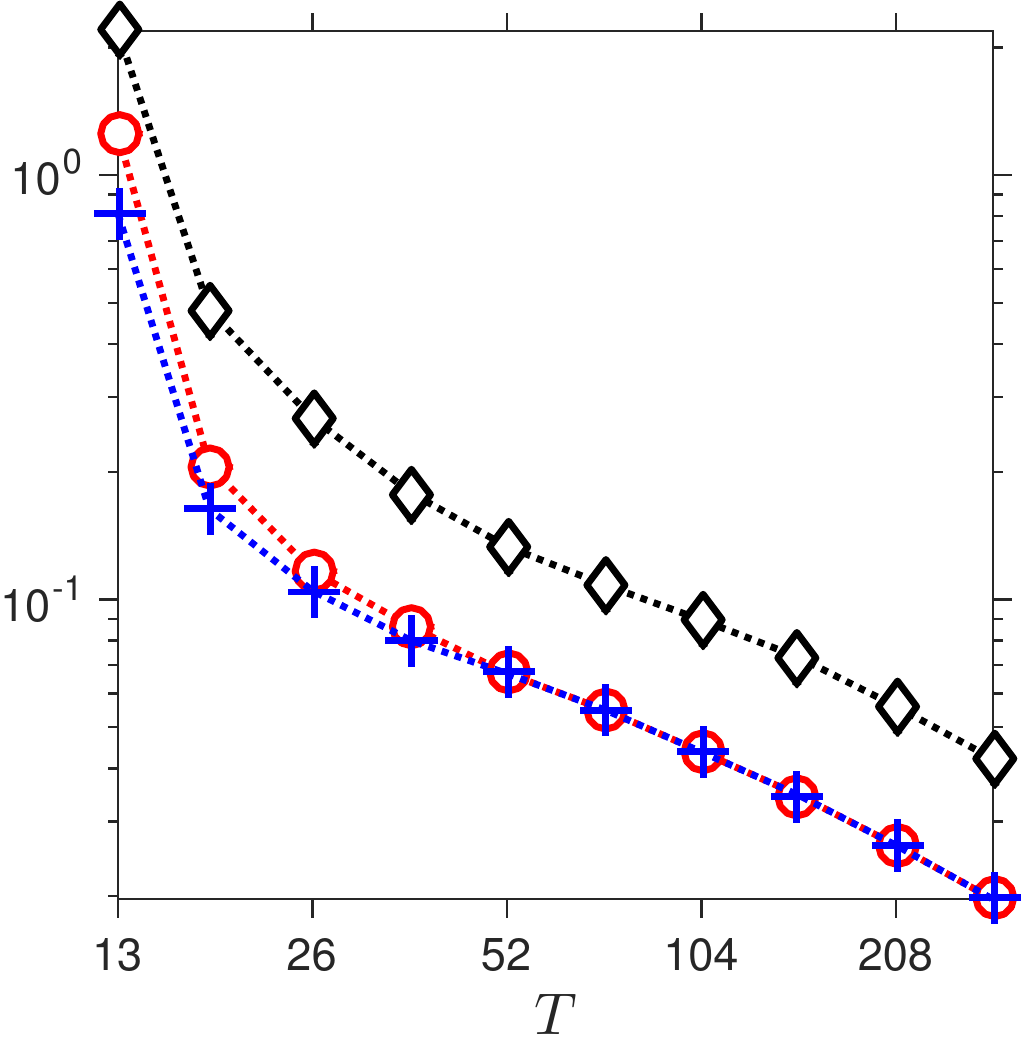}
\caption{\texttt{LowRankHiNoise}}
\label{fig:HI-789}
\end{center}
\end{subfigure}
\end{center}

\vspace{.5em}

\begin{center}
\begin{subfigure}{.325\textwidth}
\begin{center}
\includegraphics[height=1.5in]{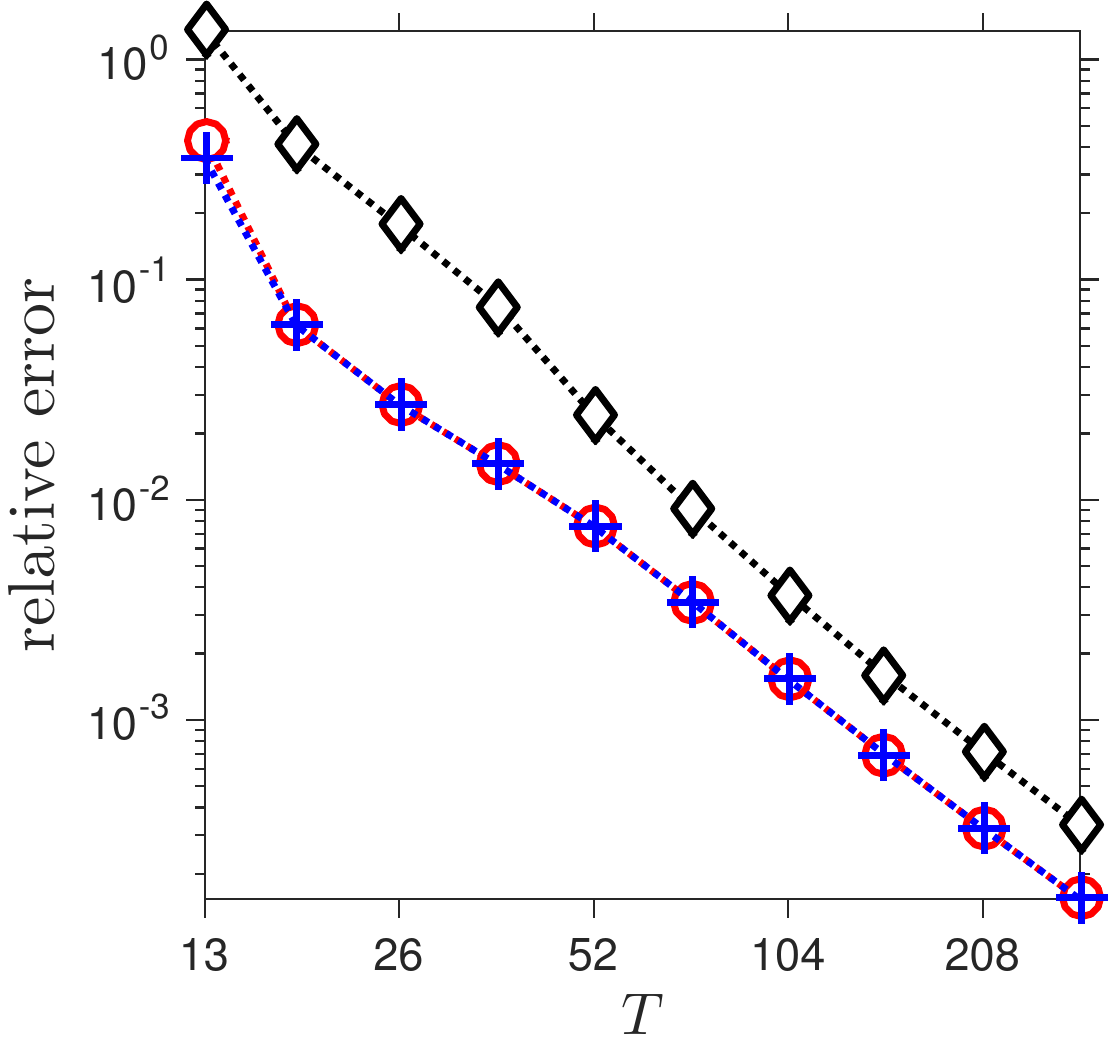}
\caption{\texttt{PolyDecaySlow}}
\label{fig:PSLOW-789}
\end{center}
\end{subfigure}
\begin{subfigure}{.325\textwidth}
\begin{center}
\includegraphics[height=1.5in]{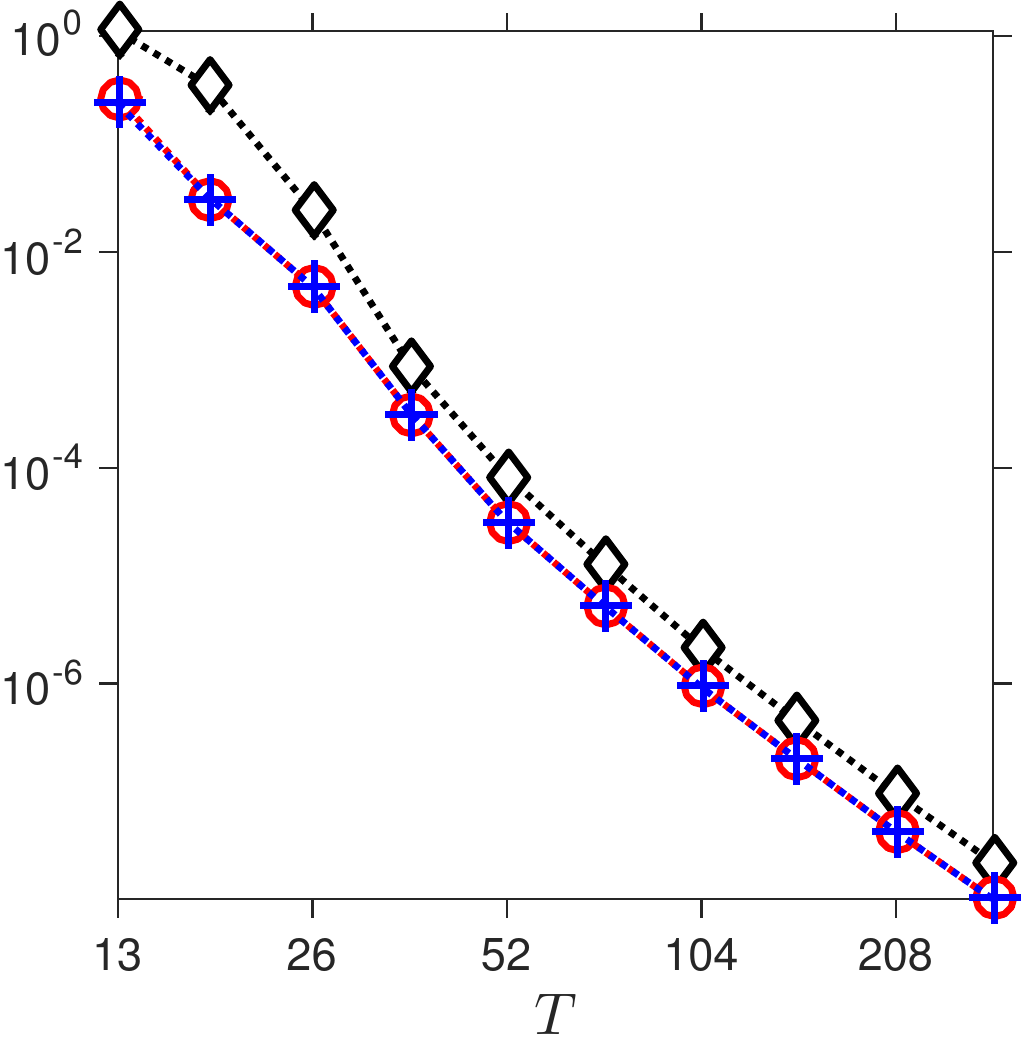}
\caption{\texttt{PolyDecayFast}}
\label{fig:PFAST-789}
\end{center}
\end{subfigure}
\begin{subfigure}{.325\textwidth}
\begin{center}
\includegraphics[height=1.5in]{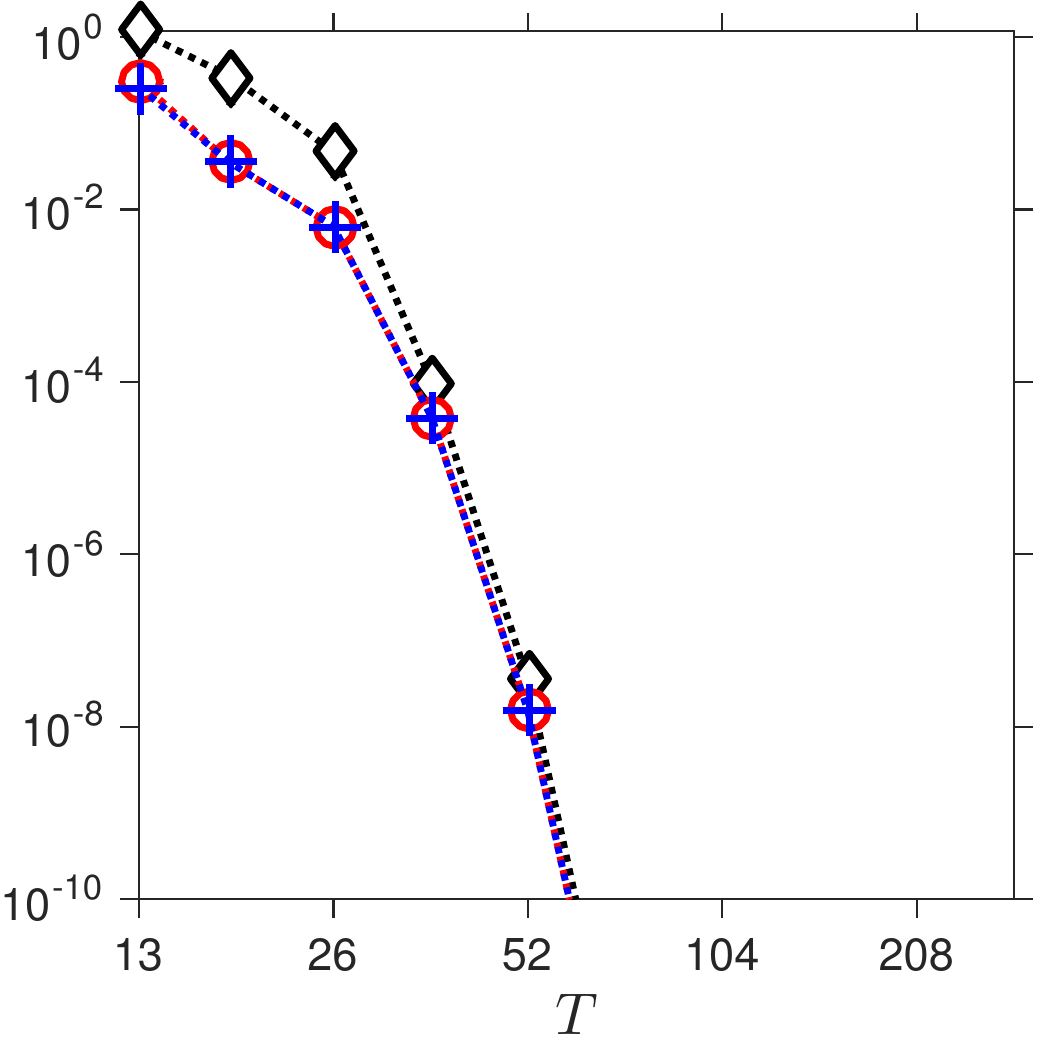}
\caption{\texttt{ExpDecaySlow}}
\label{fig:ESLOW-789}
\end{center}
\end{subfigure}
\end{center}

\vspace{0.5em}

\begin{center}
\begin{subfigure}{.325\textwidth}
\begin{center}
\includegraphics[height=1.5in]{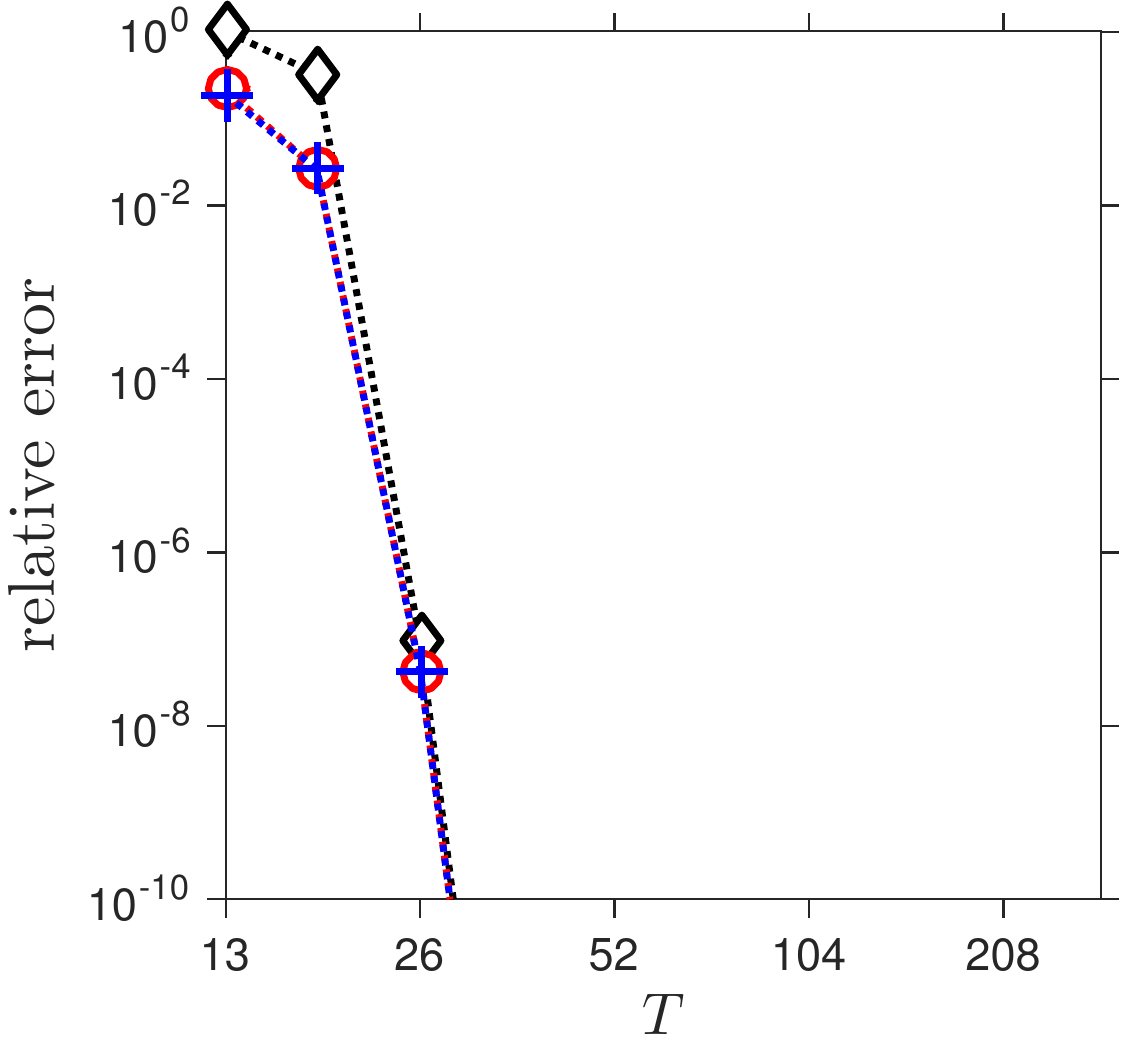}
\caption{\texttt{ExpDecayFast}}
\label{fig:EFAST-789}
\end{center}
\end{subfigure}
\begin{subfigure}{.325\textwidth}
\begin{center}
\includegraphics[height=1.5in]{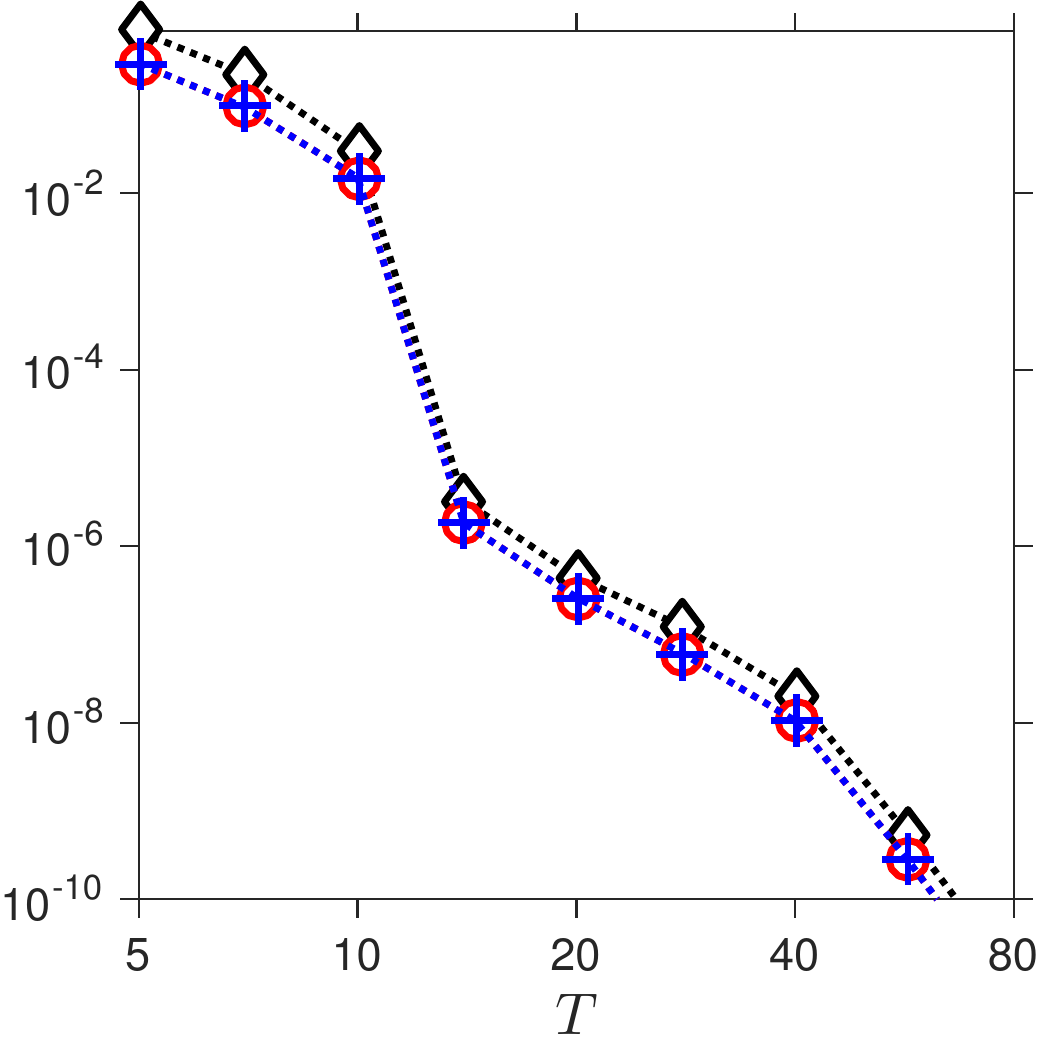}
\caption{\texttt{Data} ($r = 1$)}
\label{fig:DATA1-789}
\end{center}
\end{subfigure}
\begin{subfigure}{.325\textwidth}
\begin{center}
\includegraphics[height=1.5in]{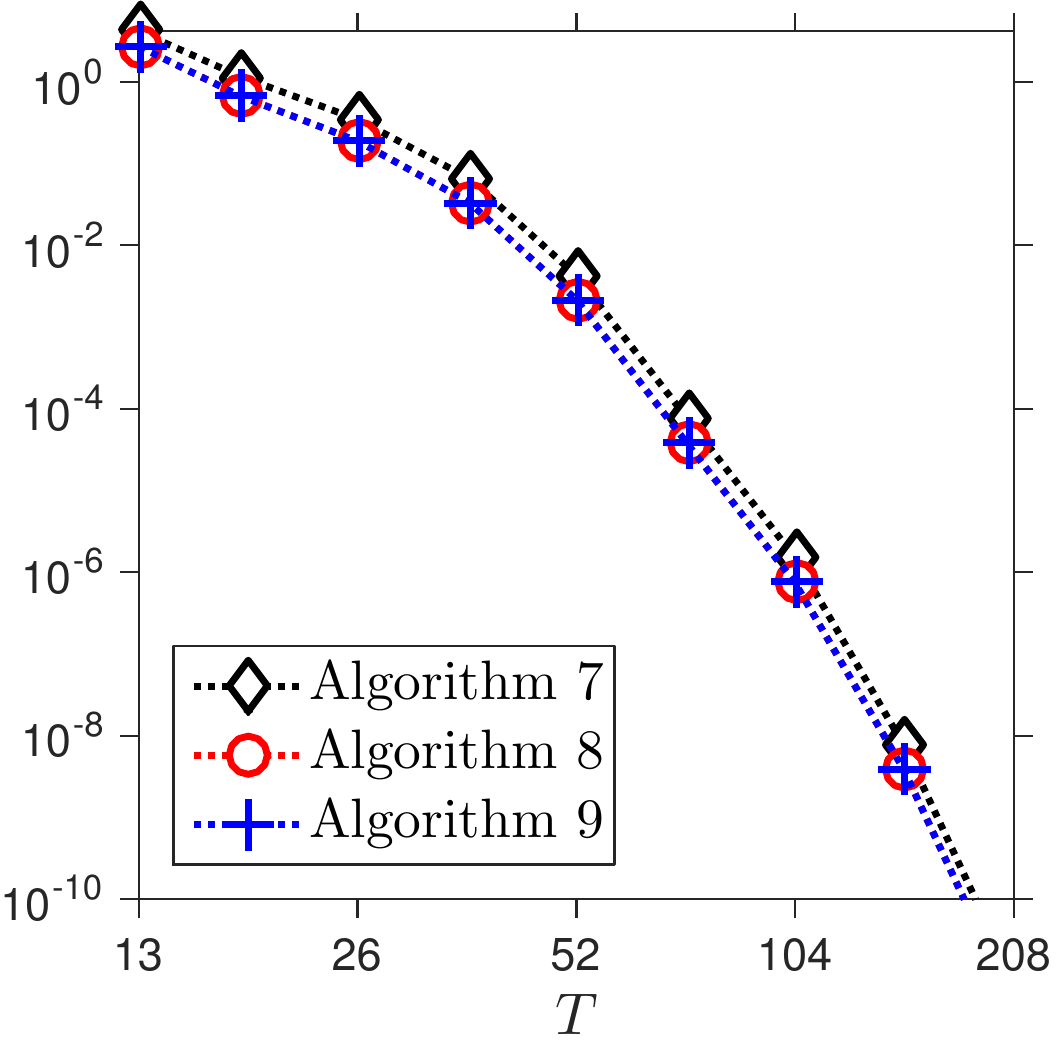}
\caption{\texttt{Data} ($r = 5$)}
\label{fig:DATA5-789}
\end{center}
\end{subfigure}
\end{center}

\vspace{0.5em}

\caption{\textbf{Oracle performance of sketching algorithms for structured fixed-rank matrix approximation
as a function of storage cost.} For each of the input matrices described in \cref{sec:input-matrix-examples},
we compare the oracle performance of the unstructured approximation (\cref{alg:fixed-rank-recon}),
the conjugate symmetric approximation (\cref{alg:sym-fixed-rank-recon}),
and the positive-semidefinite approximation (\cref{alg:psd-fixed-rank-recon}).
The matrix dimensions are $m = n = 10^3$ for the synthetic examples
and $m = n = 25,921$ for the matrix \texttt{Data}
from the phase retrieval application.  Each approximation has rank $r = 5$, unless otherwise stated.
The variable $T$ on the horizontal axis is (proportional to) the total storage used by each
sketching method.  Each data series displays the best relative error
\cref{eqn:relative-error} that the specified algorithm can achieve with storage $T$.
See \cref{sec:oracle-error} for details.}
\label{fig:oracle-structured}
\end{figure}

\begin{figure}[htp!]
\begin{center}
\begin{subfigure}{.325\textwidth}
\begin{center}
\includegraphics[height=1.5in]{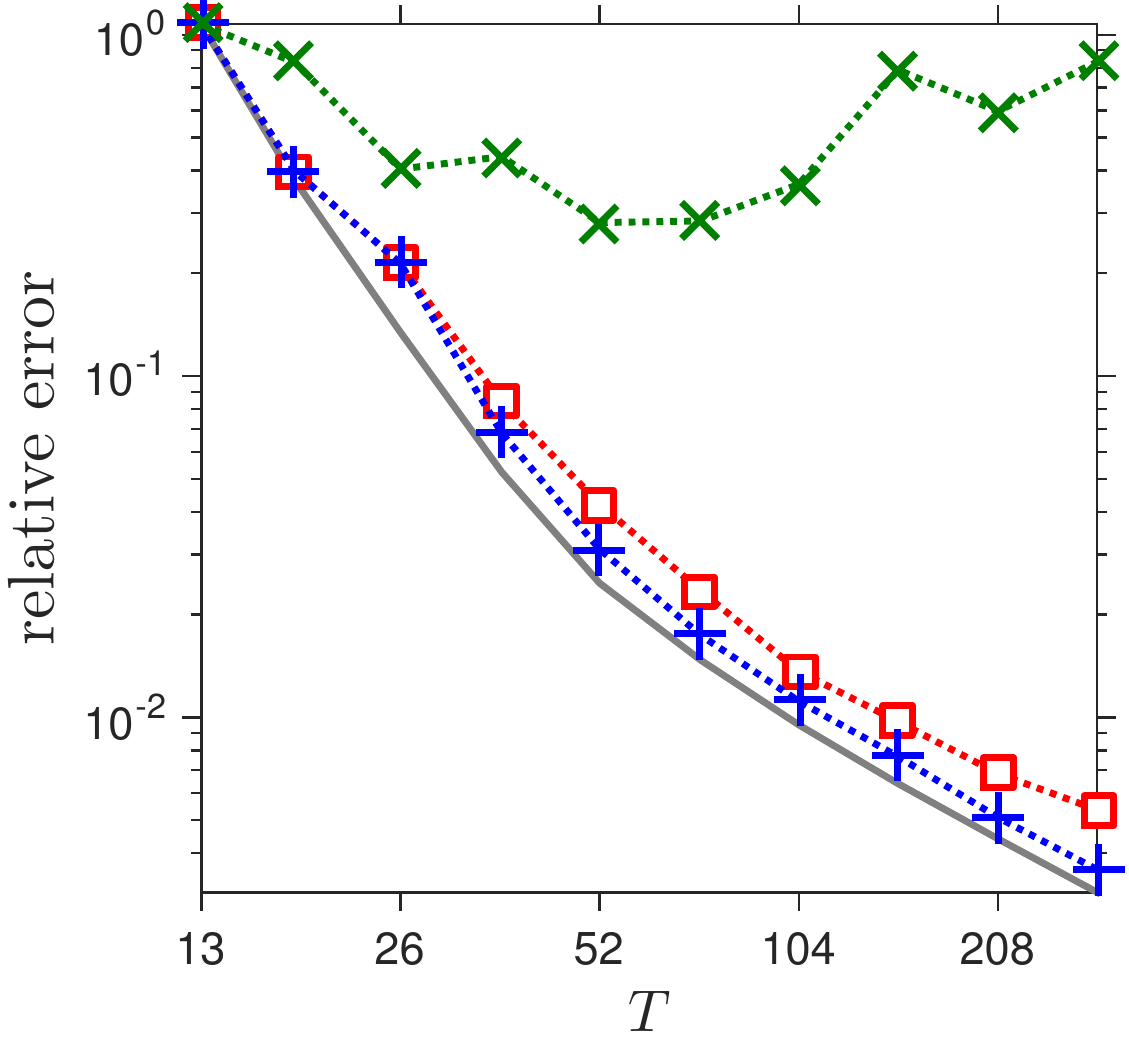}
\caption{\texttt{LowRankMedNoise}}
\label{fig:MED-theory}
\end{center}
\end{subfigure}
\begin{subfigure}{.325\textwidth}
\begin{center}
\includegraphics[height=1.5in]{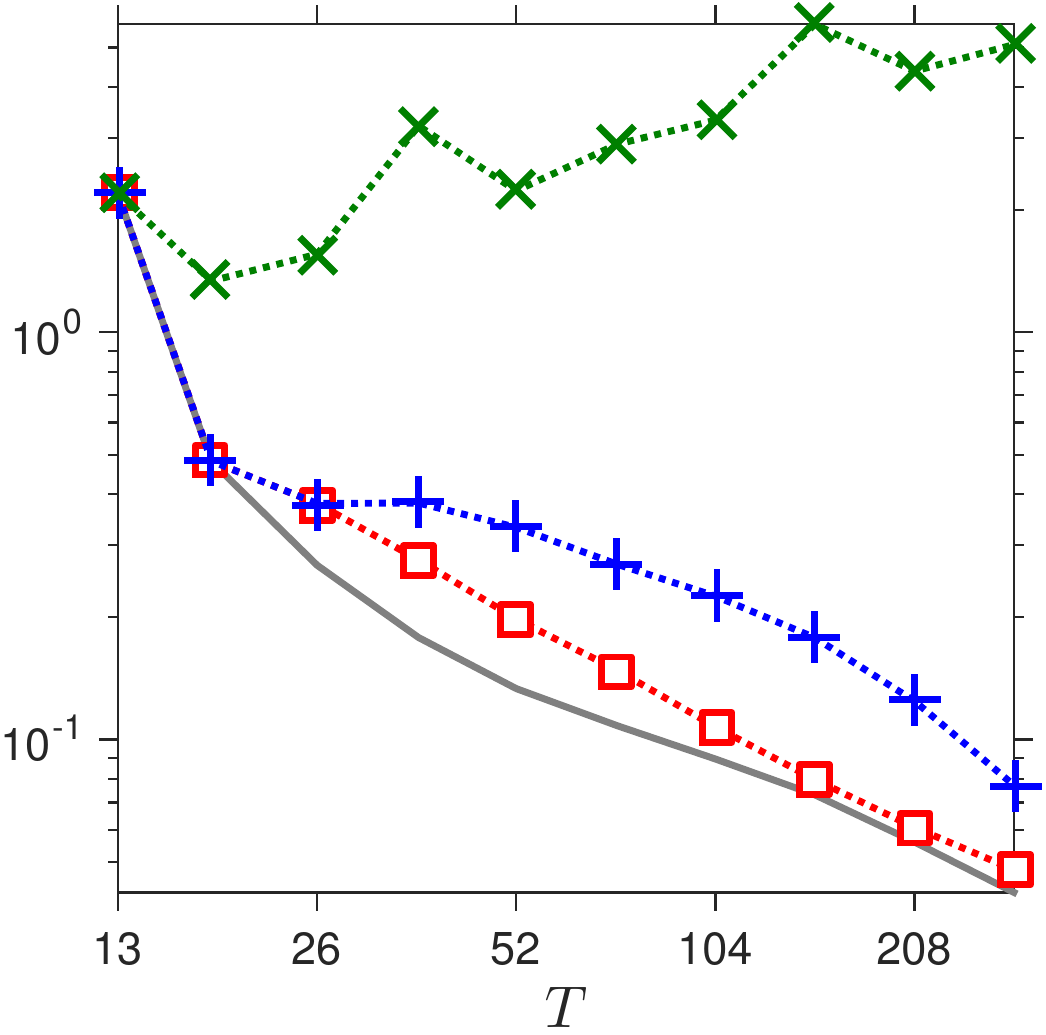}
\caption{\texttt{LowRankHiNoise}}
\label{fig:HI-theory}
\end{center}
\end{subfigure}
\end{center}

\vspace{.5em}

\begin{center}
\begin{subfigure}{.325\textwidth}
\begin{center}
\includegraphics[height=1.5in]{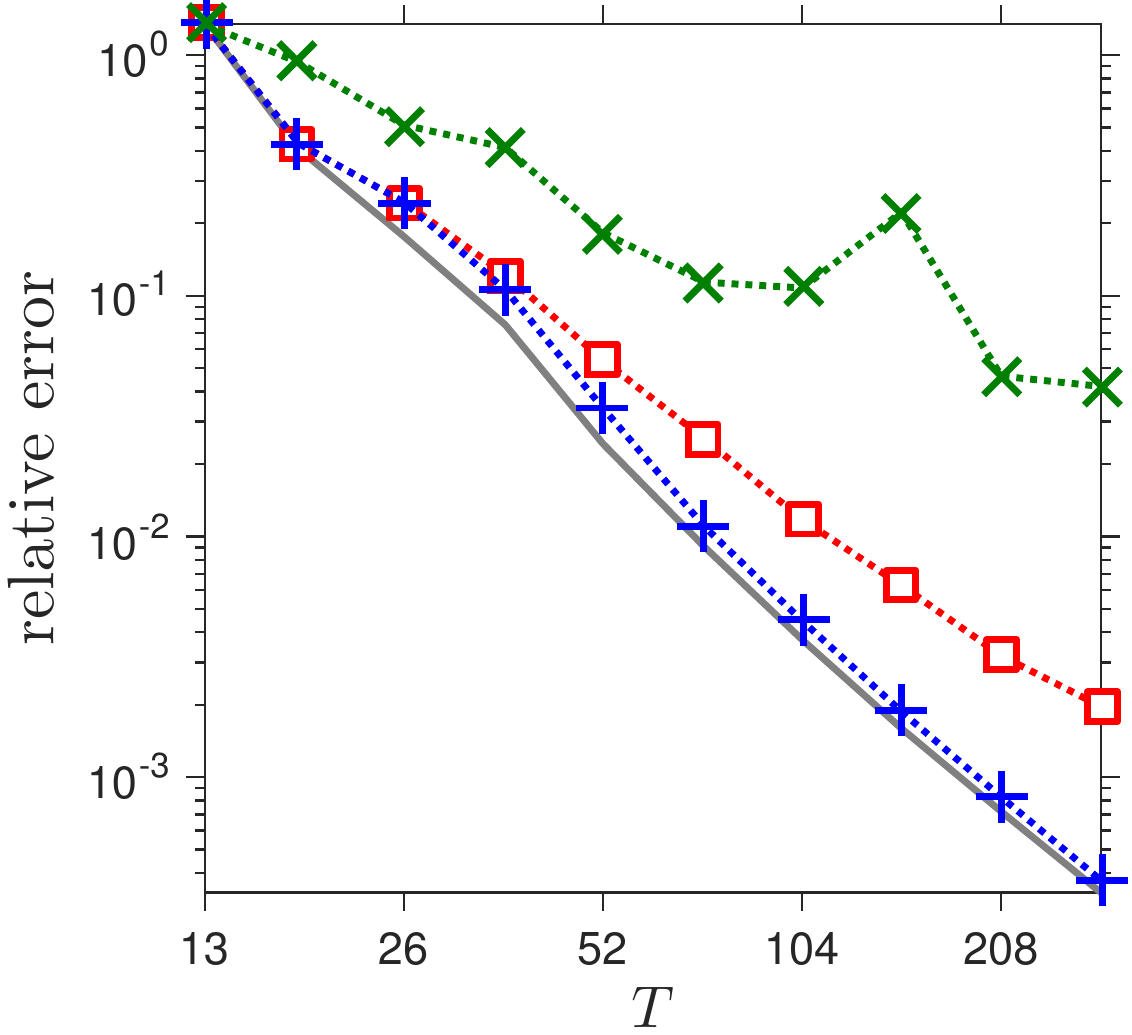}
\caption{\texttt{PolyDecaySlow}}
\label{fig:PSLOW-theory}
\end{center}
\end{subfigure}
\begin{subfigure}{.325\textwidth}
\begin{center}
\includegraphics[height=1.5in]{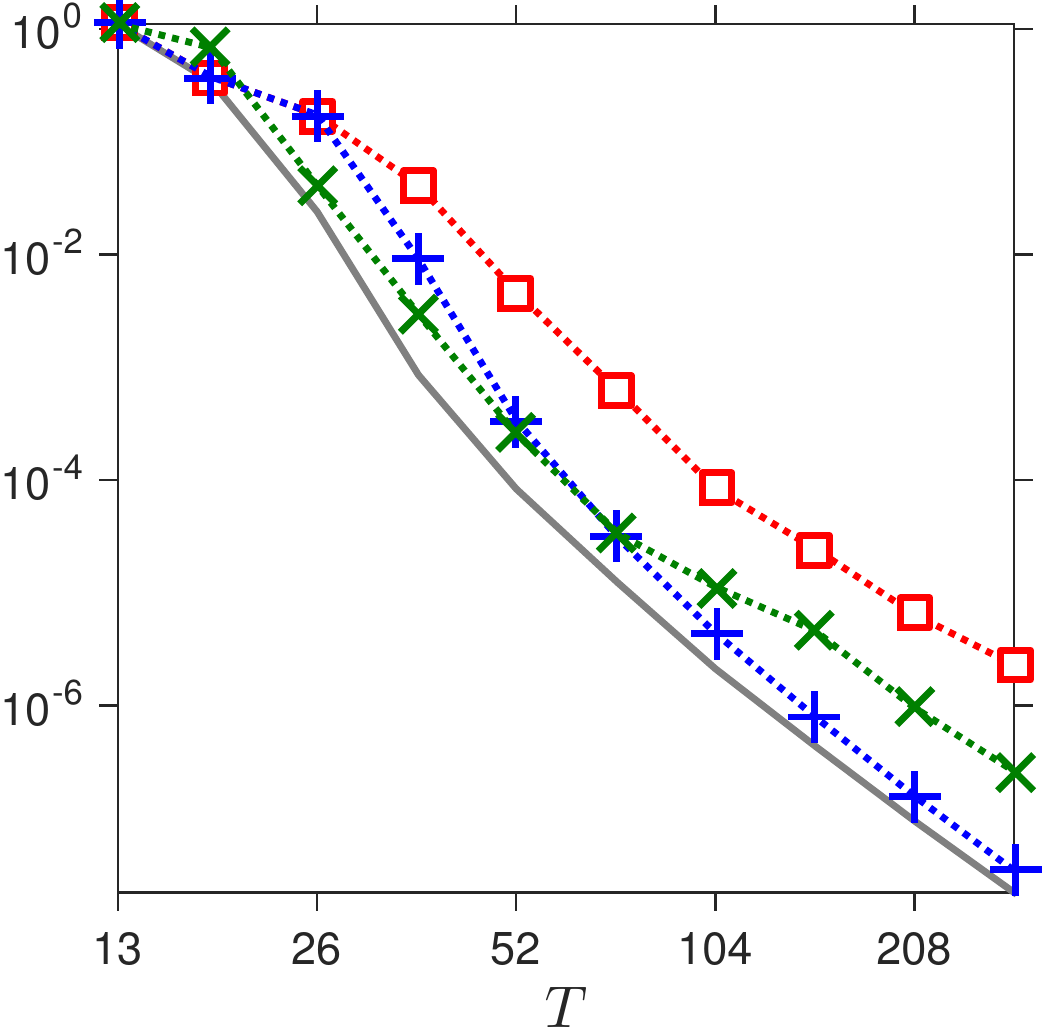}
\caption{\texttt{PolyDecayFast}}
\label{fig:PFAST-theory}
\end{center}
\end{subfigure}
\begin{subfigure}{.325\textwidth}
\begin{center}
\includegraphics[height=1.5in]{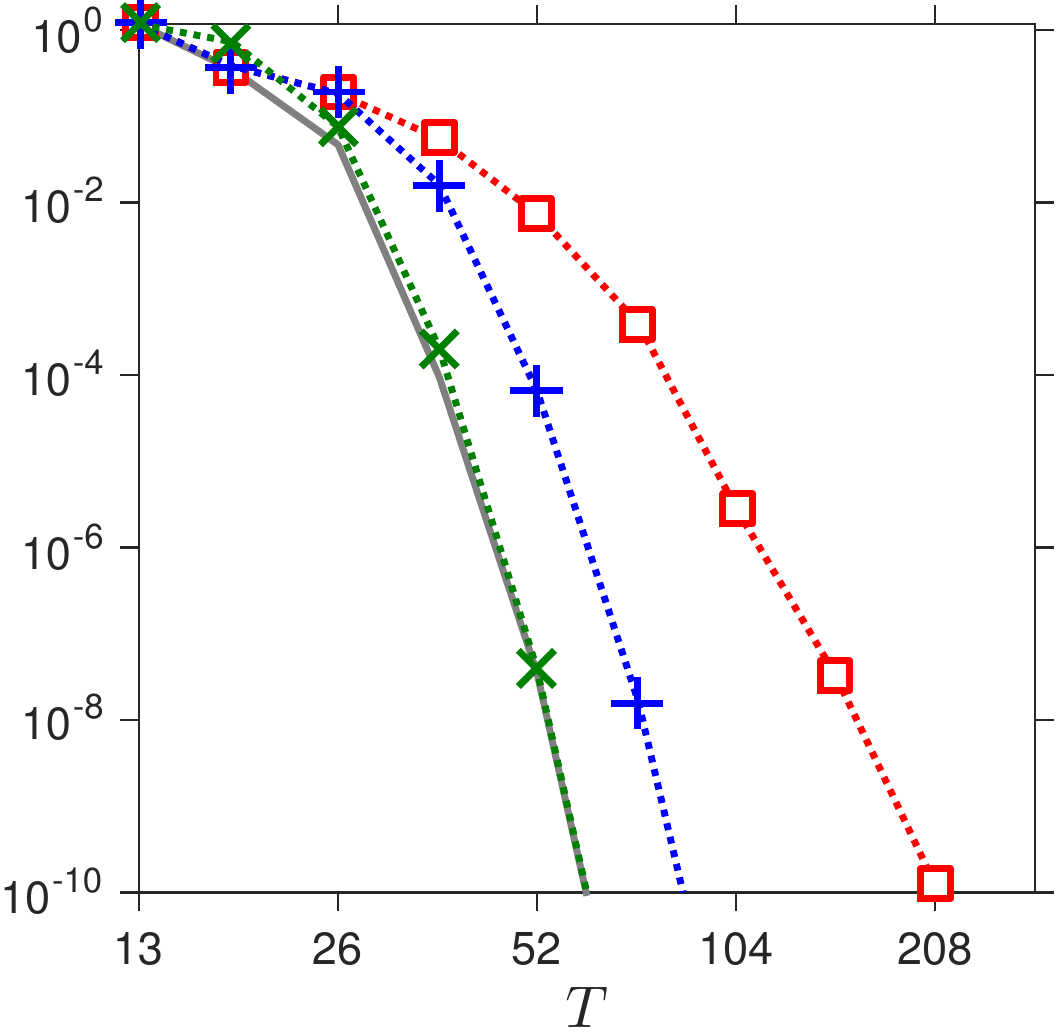}
\caption{\texttt{ExpDecaySlow}}
\label{fig:ESLOW-theory}
\end{center}
\end{subfigure}
\end{center}

\vspace{0.5em}

\begin{center}
\begin{subfigure}{.325\textwidth}
\begin{center}
\includegraphics[height=1.5in]{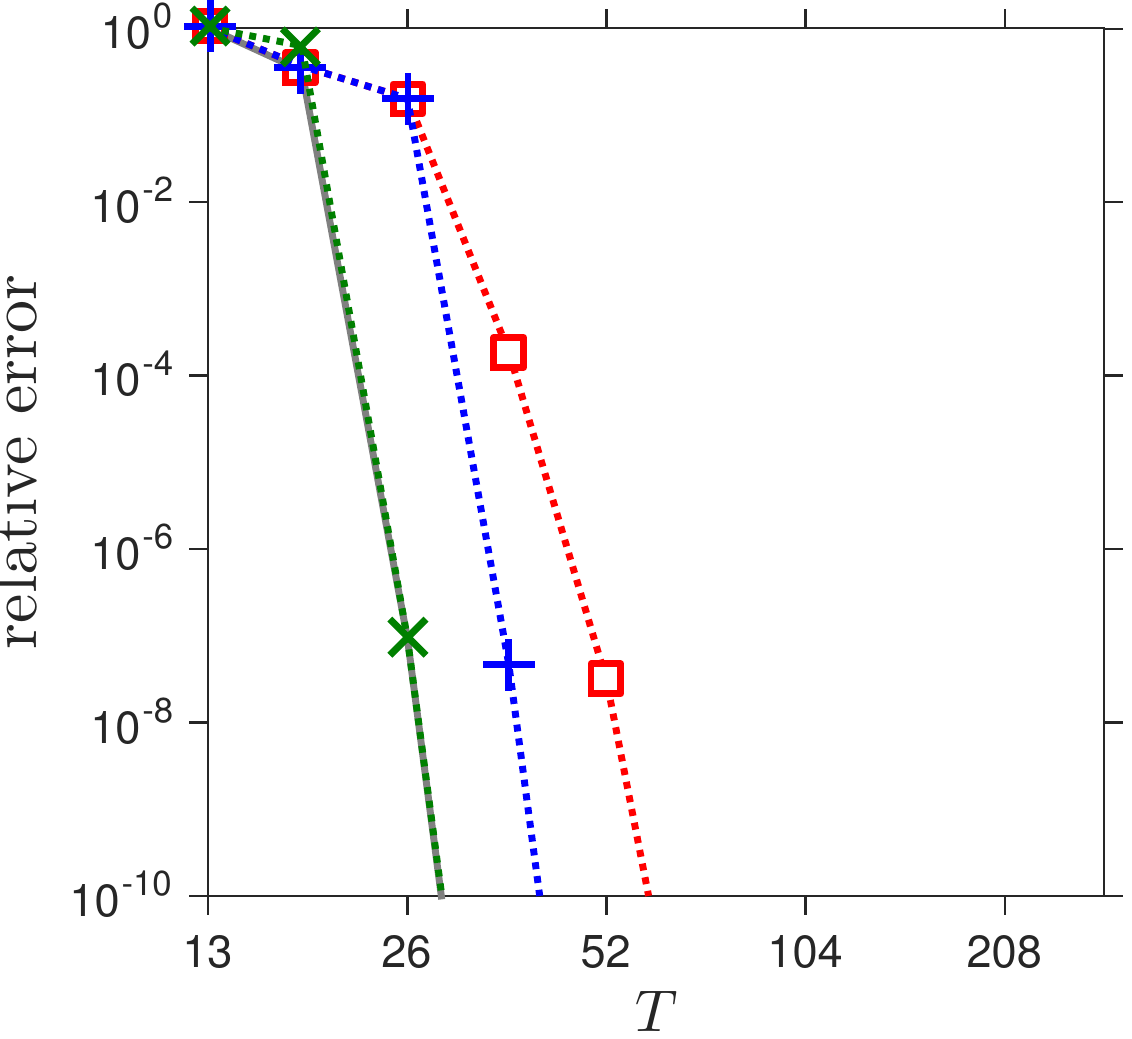}
\caption{\texttt{ExpDecayFast}}
\label{fig:EFAST-theory}
\end{center}
\end{subfigure}
\begin{subfigure}{.325\textwidth}
\begin{center}
\includegraphics[height=1.5in]{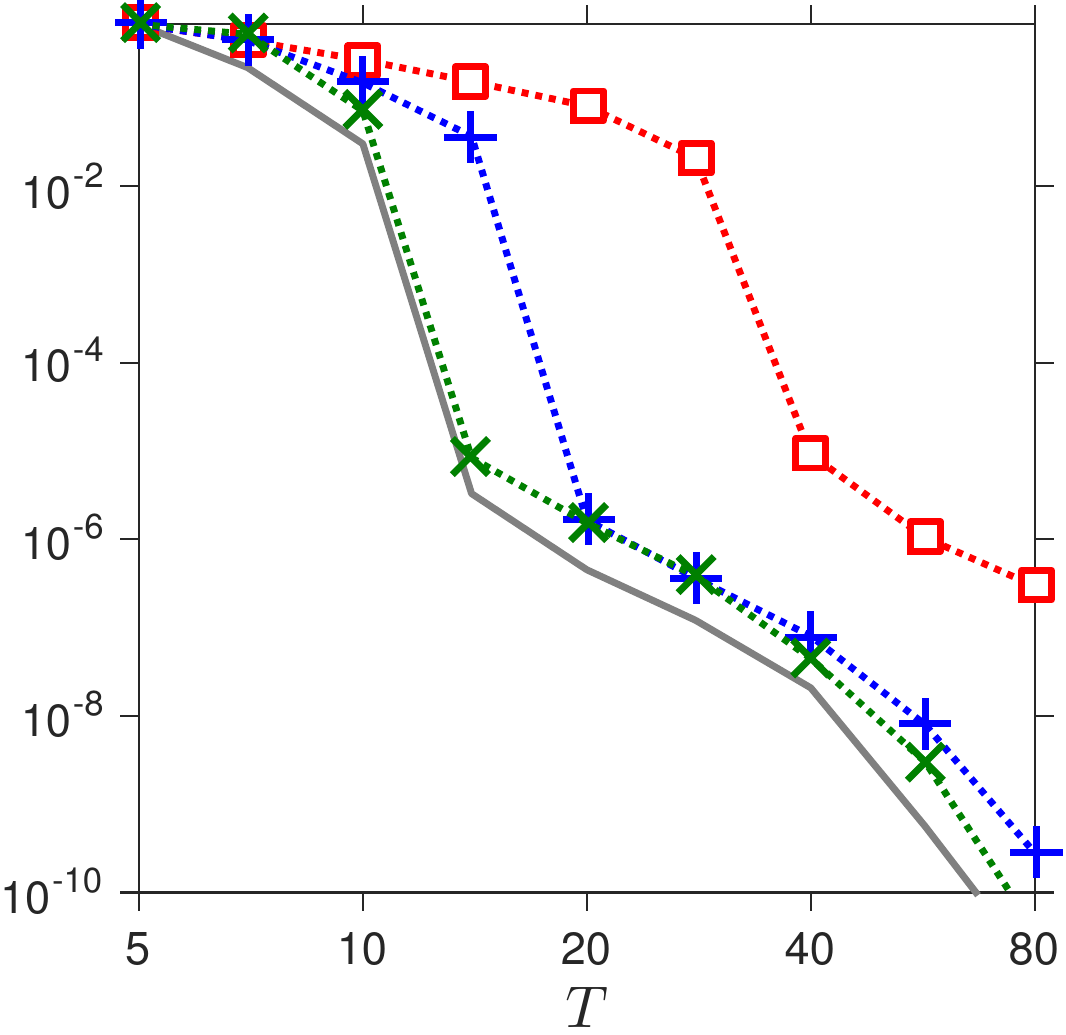}
\caption{\texttt{Data} ($r = 1$)}
\label{fig:DATA1-theory}
\end{center}
\end{subfigure}
\begin{subfigure}{.325\textwidth}
\begin{center}
\includegraphics[height=1.5in]{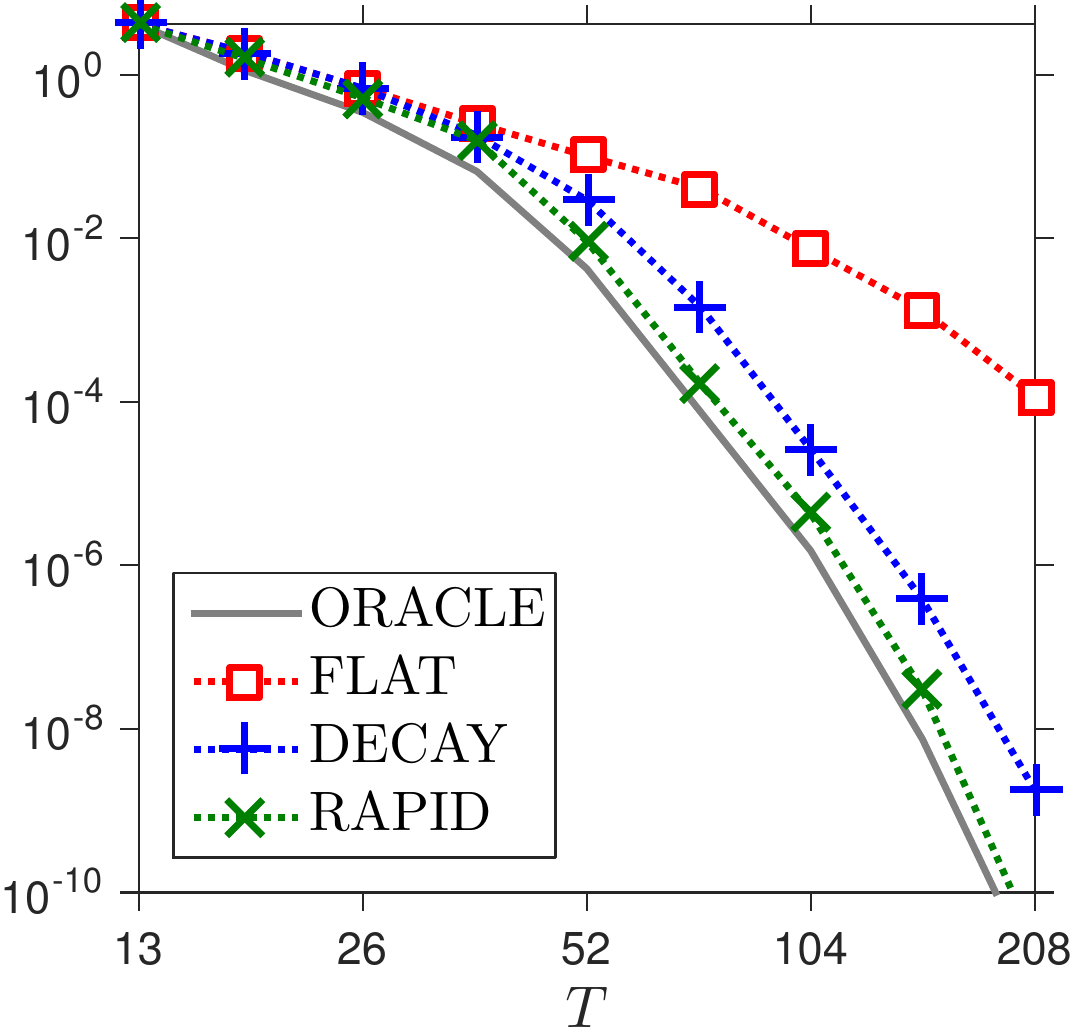}
\caption{\texttt{Data} ($r = 5$)}
\label{fig:DATA5-theory}
\end{center}
\end{subfigure}
\end{center}

\caption{\textbf{Performance of a sketching algorithm for fixed-rank matrix approximation
with a priori parameter choices.}
For each of the input matrices described in \cref{sec:input-matrix-examples},
we compare the oracle performance of the fixed-rank approximation, \cref{alg:fixed-rank-recon},
against its performance at theoretically motivated parameter choices.
The matrix dimensions are $m = n = 10^3$ for the synthetic examples
and $m = n = 25,921$ for the matrix \texttt{Data}
from the phase retrieval application.  Each approximation has rank $r = 5$, unless otherwise stated.
The variable $T$ on the horizontal axis is (proportional to) the total storage used by each
sketching method.  The oracle performance is drawn from \cref{fig:oracle-performance}.
Each data series displays the relative error \cref{eqn:relative-error}
that \cref{alg:fixed-rank-recon} achieves for a specific parameter selection.
The parameter choice \texttt{FLAT} \cref{eqn:flat-params}
is designed for matrices with a flat spectral tail;  \texttt{DECAY} \cref{eqn:decay-params}
is for a slowly decaying spectrum; \texttt{RAPID} \cref{eqn:fast-decay-params}
is for a rapidly decaying spectrum. See \cref{sec:theory-params} for details.} \label{fig:theory-params}
\end{figure}

\appendix

\section{Analysis of the Low-Rank Approximation}

In this appendix, we develop theoretical results on the performance
of the basic low-rank approximation~\cref{eqn:Ahat} implemented
in \cref{alg:simple-low-rank-recon,alg:detailed-low-rank-recon}.

\subsection{Facts about Random Matrices}

Our arguments require classical formulae for the expectations
of functions of a standard normal matrix.  In the real case,
these results are~\cite[Prop.~A.1 and A.6]{HMT11:Finding-Structure}.
The complex case follows from the same principles, so we omit the details.

\begin{fact} \label{fact:expect-gauss-frob}
Let $\mtx{G} \in \F^{t \times s}$ be a standard normal matrix.
For all matrices $\mtx{B}$ and $\mtx{C}$ with conforming dimensions,
\begin{equation} \label{eqn:expect-gauss-frob}
\Expect \fnormsq{ \mtx{BGC} } = \beta \fnormsq{\mtx{B}} \fnormsq{\mtx{C}}.
\end{equation}
Furthermore, if $t > s + \alpha$,
\begin{equation} \label{eqn:expect-gauss-pinv-frob}
\Expect \fnormsq{\mtx{G}^\dagger} = \frac{1}{\beta} \cdot \frac{s}{t - s - \alpha}
	= \frac{1}{\beta} \cdot f(s, t).
\end{equation}
The numbers $\alpha$ and $\beta$ are given by~\cref{eqn:alpha-parameter};
the function $f$ is introduced in~\cref{eqn:f-intro}.
\end{fact}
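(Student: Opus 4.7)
The two identities require distinct strategies: \eqref{eqn:expect-gauss-frob} follows from an entry-wise second-moment calculation, while \eqref{eqn:expect-gauss-pinv-frob} reduces to the first moment of an inverse Wishart matrix. In both cases the case distinction $\F \in \{\R, \C\}$ is encoded entirely through the constants $(\alpha, \beta)$, so I would prove each identity uniformly across the two fields.

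For \eqref{eqn:expect-gauss-frob}, I would expand $\fnormsq{\mtx{BGC}} = \sum_{i,l} \abs{\sum_{j,k} B_{ij} G_{jk} C_{kl}}^2$, exchange expectation with summation, and apply the second-moment identity $\Expect[G_{jk} \overline{G_{j'k'}}] = \beta \, \delta_{jj'} \delta_{kk'}$. This identity is immediate from the independence of the entries: it gives $\beta = 1$ in the real case, and in the complex case the decomposition $G_{jk} = X_{jk} + \mathrm{i} Y_{jk}$ with independent real standard normals contributes one unit of variance from each of $X$ and $Y$, accounting for the factor $\beta = 2$. After the cross terms vanish, the surviving diagonal sum factors as $\beta \bigl(\sum_j \abs{B_{ij}}^2\bigr)\bigl(\sum_k \abs{C_{kl}}^2\bigr)$, and summing over $i, l$ delivers $\beta \fnormsq{\mtx{B}} \fnormsq{\mtx{C}}$.

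For \eqref{eqn:expect-gauss-pinv-frob}, my first step is to note that $t > s + \alpha \geq s$ guarantees $\mtx{G}$ has full column rank almost surely, so $\mtx{G}^\dagger = (\mtx{G}^* \mtx{G})^{-1} \mtx{G}^*$. Cyclicity of the trace then collapses the Frobenius norm to $\fnormsq{\mtx{G}^\dagger} = \trace\bigl[(\mtx{G}^* \mtx{G})^{-1}\bigr]$. The matrix $\mtx{G}^* \mtx{G}$ is a (real or complex) Wishart matrix on $s$ dimensions with $t$ degrees of freedom, and the classical inverse-Wishart first moment is $\Expect[(\mtx{G}^* \mtx{G})^{-1}] = \beta^{-1} (t - s - \alpha)^{-1} \Id_s$; a single trace closes the computation.

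The main obstacle is treating the complex case cleanly, because the paper's convention normalizes a complex standard normal entry to $\Expect \abs{G_{jk}}^2 = 2$, rather than the unit-variance convention used in most classical statements of the inverse-Wishart moment. My preferred route is a unified derivation via the thin orthogonal--triangular factorization $\mtx{G} = \mtx{QR}$: by left-unitary invariance the factor $\mtx{Q}$ decouples, and the squared diagonal entries of $\mtx{R}$ are independent, rescaled chi-square variables whose degrees of freedom depend on $\beta$ and $t - i + 1$. A short direct computation using $\Expect[1/\chi^2_\nu] = 1/(\nu - 2)$, combined with a telescoping summation over $i = 1, \dots, s$, produces the claimed $\beta^{-1}(t - s - \alpha)^{-1}$ factor simultaneously in both fields. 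A shorter alternative is simply to invoke~\cite[Prop.~A.1 and A.6]{HMT11:Finding-Structure} for the real case and to verify that every step carries over to $\F = \C$ after substituting $(\alpha, \beta) = (0, 2)$, as the authors remark.
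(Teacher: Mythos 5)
Your proposal is correct, and it is worth noting that the paper itself does not supply a proof of this fact: it cites \cite[Prop.~A.1 and A.6]{HMT11:Finding-Structure} for the real case and asserts that the complex case ``follows from the same principles.'' Your write-up fills in that outline. For \eqref{eqn:expect-gauss-frob}, the entrywise computation with $\Expect[G_{jk}\overline{G_{j'k'}}] = \beta\,\delta_{jj'}\delta_{kk'}$ is exactly the uniform treatment the paper has in mind, and only the covariance (not the pseudo-covariance) appears in the expansion, so the complex case needs no extra argument. For \eqref{eqn:expect-gauss-pinv-frob}, your primary route---full column rank almost surely, $\fnormsq{\mtx{G}^\dagger} = \trace\bigl[(\mtx{G}^*\mtx{G})^{-1}\bigr]$, then the first moment of the inverse Wishart---is the same argument as the cited HMT proposition, so there the real half of your proof coincides with the paper's source; the genuine added value is your explicit bookkeeping of the normalization $\Expect\abs{G_{jk}}^2 = \beta$, which is precisely the detail the paper leaves to the reader. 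Your QR/Bartlett alternative is the more self-contained, field-uniform derivation, but as phrased it is slightly loose: $\fnormsq{\mtx{R}^{-1}}$ is not determined by the diagonal of $\mtx{R}$ alone, so ``a telescoping summation over $i$'' should be implemented as a recursion. Partition off the last column, $\mtx{R} = \bigl[\begin{smallmatrix} \mtx{R}_{11} & \vct{r} \\ 0 & \rho \end{smallmatrix}\bigr]$, use the Bartlett facts that $\abs{\rho}^2 \sim \chi^2_{\beta(t-s+1)}$ (in the paper's scaling) is independent of the standard normal column $\vct{r}$ and of $\mtx{R}_{11}$, apply \eqref{eqn:expect-gauss-frob} to $\Expect\norm{\mtx{R}_{11}^{-1}\vct{r}}^2 = \beta \fnormsq{\mtx{R}_{11}^{-1}}$, and the resulting recursion $c_s = c_{s-1} + \beta^{-1}(t-s-\alpha)^{-1}(1+\beta c_{s-1})$ does collapse by induction to $c_s = \beta^{-1}s/(t-s-\alpha)$. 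With that refinement either of your routes is a complete proof, and your closing alternative---invoke the HMT propositions and verify the substitution $(\alpha,\beta)=(0,2)$---is literally what the paper does.
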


\subsection{Results from Randomized Linear Algebra}

Our arguments also depend heavily on the analysis of randomized
low-rank approximation developed in~\cite[Sec.~10]{HMT11:Finding-Structure}.
We state these results using the familiar notation
from \cref{sec:mult-sketching,sec:low-rank-recon}.

\begin{fact}[Halko et al.~2011] \label{fact:hmt-err}
Fix $\mtx{A} \in \F^{m \times n}$.  Let $\varrho$ be a natural number
such that $\varrho < k - \alpha$.
Draw the random test matrix $\mtx{\Omega} \in \F^{k \times n}$
from the standard normal distribution.
Then the matrix $\mtx{Q}$ computed by~\cref{eqn:def-Q} satisfies
$$
\Expect_{\mtx{\Omega}} \fnormsq{ \mtx{A} - \mtx{QQ}^* \mtx{A} }
	\leq (1 + f(\varrho, k)) \cdot \tau_{\varrho+1}^2(\mtx{A}).
$$
The number $\alpha$ is given by~\cref{eqn:alpha-parameter};
the function $f$ is introduced in~\cref{eqn:f-intro}.
\end{fact}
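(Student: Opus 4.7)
The plan is to reduce this range-finder expectation bound to a classical deterministic structural identity from Halko--Martinsson--Tropp and then take Gaussian expectation in two stages using the identities supplied by Fact~\ref{fact:expect-gauss-frob}.

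First, I would fix an SVD $\mtx{A} = \mtx{U}\mtx{\Sigma}\mtx{V}^*$ and split it conformally at index $\varrho$: write $\mtx{\Sigma} = \diag(\mtx{\Sigma}_1, \mtx{\Sigma}_2)$ with $\mtx{\Sigma}_1 \in \F^{\varrho \times \varrho}$ holding the top singular values, and partition $\mtx{V} = [\mtx{V}_1,\mtx{V}_2]$ and $\mtx{U} = [\mtx{U}_1,\mtx{U}_2]$ correspondingly. Define
\begin{equation*}
\mtx{\Omega}_1 := \mtx{V}_1^*\mtx{\Omega} \in \F^{\varrho \times k}
\quad\text{and}\quad
\mtx{\Omega}_2 := \mtx{V}_2^*\mtx{\Omega} \in \F^{(n-\varrho)\times k}.
\end{equation*}
Rotational invariance of the standard normal distribution, combined with the fact that $\mtx{V}$ is unitary, ensures that $\mtx{\Omega}_1$ and $\mtx{\Omega}_2$ are independent standard normal matrices.

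Second, I would invoke the deterministic structural bound underlying the range-finder analysis of HMT (the derivation of their eq.~(10.7)). Since $\varrho < k - \alpha \le k$, the matrix $\mtx{\Omega}_1$ has full row rank almost surely, and writing $\mtx{Y}\mtx{\Omega}_1^\dagger = \mtx{U}_1\mtx{\Sigma}_1 + \mtx{U}_2\mtx{\Sigma}_2\mtx{\Omega}_2\mtx{\Omega}_1^\dagger$ shows that a specific perturbation of $\operatorname{range}(\mtx{U}_1)$ is contained in $\operatorname{range}(\mtx{Y}) = \operatorname{range}(\mtx{Q})$. Optimizing the projection error against this feasible subspace and expanding in the unitary basis $[\mtx{U}_1,\mtx{U}_2]$ produces the deterministic inequality
\begin{equation*}
\fnormsq{\mtx{A} - \mtx{QQ}^*\mtx{A}}
\;\le\; \fnormsq{\mtx{\Sigma}_2} + \fnormsq{\mtx{\Sigma}_2\mtx{\Omega}_2\mtx{\Omega}_1^\dagger}.
\end{equation*}
The first summand is $\tau_{\varrho+1}^2(\mtx{A})$ by the Eckart--Young identity~\cref{eqn:tail-energy}.

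Third, I would take expectation of the stochastic cross term in two stages. Conditioning on $\mtx{\Omega}_1$ and treating $\mtx{\Sigma}_2$ and $\mtx{\Omega}_1^\dagger$ as fixed, an application of~\cref{eqn:expect-gauss-frob} to the independent Gaussian $\mtx{\Omega}_2$ yields
\begin{equation*}
\Expect_{\mtx{\Omega}_2}\!\left[\fnormsq{\mtx{\Sigma}_2\mtx{\Omega}_2\mtx{\Omega}_1^\dagger}\,\middle|\,\mtx{\Omega}_1\right]
\;=\; \beta\,\fnormsq{\mtx{\Sigma}_2}\,\fnormsq{\mtx{\Omega}_1^\dagger}.
\end{equation*}
Since $\mtx{\Omega}_1^* \in \F^{k \times \varrho}$ is standard normal with $k > \varrho + \alpha$ by hypothesis, \cref{eqn:expect-gauss-pinv-frob} applied to $\mtx{\Omega}_1^*$ (using $\fnorm{\mtx{\Omega}_1^\dagger} = \fnorm{(\mtx{\Omega}_1^*)^\dagger}$) gives $\Expect\fnormsq{\mtx{\Omega}_1^\dagger} = f(\varrho,k)/\beta$. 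Tower-law combination produces $\Expect\fnormsq{\mtx{\Sigma}_2\mtx{\Omega}_2\mtx{\Omega}_1^\dagger} = f(\varrho,k)\,\tau_{\varrho+1}^2(\mtx{A})$, and adding back the deterministic term yields the stated bound.

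The main obstacle is the deterministic structural inequality: it is not a one-line calculation but rather the heart of the HMT range-finder analysis, requiring a careful block argument to certify that projecting onto $\operatorname{range}(\mtx{Y})$ recovers $\operatorname{range}(\mtx{U}_1)$ up to a specific error term governed by $\mtx{\Sigma}_2\mtx{\Omega}_2\mtx{\Omega}_1^\dagger$. Once this is in place, the probabilistic step reduces to two direct applications of Fact~\ref{fact:expect-gauss-frob}, using only rotational invariance and independence of $\mtx{\Omega}_1$ from $\mtx{\Omega}_2$.
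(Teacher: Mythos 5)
Your proposal is correct and is essentially the paper's own argument: the paper proves this Fact simply by citing the proof of Theorem~10.5 of Halko--Martinsson--Tropp together with \cref{fact:expect-gauss-frob}, and what you reconstruct (SVD split at index $\varrho$, rotational invariance giving independent Gaussian blocks $\mtx{\Omega}_1,\mtx{\Omega}_2$, the deterministic range-finder bound $\fnormsq{\mtx{A}-\mtx{QQ}^*\mtx{A}} \le \fnormsq{\mtx{\Sigma}_2} + \fnormsq{\mtx{\Sigma}_2\mtx{\Omega}_2\mtx{\Omega}_1^\dagger}$, then the two-stage expectation via \cref{eqn:expect-gauss-frob} and \cref{eqn:expect-gauss-pinv-frob}) is exactly that argument, with the $\beta$ bookkeeping correctly covering both fields. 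Leaving the deterministic structural inequality to the HMT block analysis (their Theorem~9.1) is consistent with what the paper itself does.
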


\noindent
This result follows immediately from the
proof of~\cite[Thm.~10.5]{HMT11:Finding-Structure}
using \cref{fact:expect-gauss-frob} to handle both
the real and complex case simultaneously.

\subsection{Proof of \cref{thm:err-frob}: Frobenius Error Bound}
\label{sec:proof-low-rank-recon}

In this section, we establish a second Frobenius-norm error bound
for the low-rank approximation~\cref{eqn:Ahat}.
We maintain the notation from \cref{sec:mult-sketching,sec:low-rank-recon},
and we state explicitly when we are making distributional assumptions
on the test matrices.

\subsubsection{Decomposition of the Approximation Error}

\Cref{fact:hmt-err} formalizes the intuition that $\mtx{A} \approx \mtx{Q}(\mtx{Q}^* \mtx{A})$.
The main object of the proof is to demonstrate that $\mtx{X} \approx \mtx{Q}^* \mtx{A}$.
The first step in the argument is to break down the approximation error into these two parts.

\begin{lemma} \label{lem:err-decomp}
Let $\mtx{A}$ be an input matrix, and
let $\hat{\mtx{A}} = \mtx{QX}$ be the approximation defined in~\cref{eqn:Ahat}.
The approximation error decomposes as
$$
\fnormsq{ \mtx{A} - \hat{\mtx{A}} }
	= \fnormsq{ \mtx{A} - \mtx{QQ}^* \mtx{A} } + \fnormsq{ \mtx{X} - \mtx{Q}^*\mtx{A} }.
$$
\end{lemma}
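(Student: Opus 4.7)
The plan is to apply the Pythagorean theorem in the Frobenius inner product, after writing the error as the sum of a component in the range of $\mtx{Q}$ and a component orthogonal to the range of $\mtx{Q}$.

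First I would introduce the orthogonal projector $\mtx{P} := \mtx{Q}\mtx{Q}^*$ onto $\range(\mtx{Q})$ and its complement $\Id - \mtx{P}$; the fact that $\mtx{Q}$ has orthonormal columns (guaranteed by the factorization \cref{eqn:def-Q}) gives $\mtx{Q}^*\mtx{Q} = \Id$. Then I would split the error by adding and subtracting $\mtx{P}\mtx{A} = \mtx{Q}\mtx{Q}^*\mtx{A}$:
$$
\mtx{A} - \hat{\mtx{A}} \;=\; (\mtx{A} - \mtx{Q}\mtx{Q}^*\mtx{A}) \;+\; \mtx{Q}(\mtx{Q}^*\mtx{A} - \mtx{X}).
$$
The first summand has all of its columns in $\range(\mtx{Q})^{\perp}$ since $(\Id - \mtx{P})(\Id - \mtx{P}) = \Id - \mtx{P}$, while the second summand has columns in $\range(\mtx{Q})$.

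Second I would verify orthogonality with respect to the Frobenius (trace) inner product. Using $\mtx{Q}^*\mtx{Q} = \Id$,
$$
\ip{(\Id - \mtx{Q}\mtx{Q}^*)\mtx{A}}{\mtx{Q}(\mtx{Q}^*\mtx{A} - \mtx{X})}
= \trace\bigl((\mtx{Q}^*\mtx{A} - \mtx{X})^* \mtx{Q}^*(\Id - \mtx{Q}\mtx{Q}^*)\mtx{A}\bigr) = 0,
$$
because $\mtx{Q}^*(\Id - \mtx{Q}\mtx{Q}^*) = \mtx{Q}^* - \mtx{Q}^* = \mtx{0}$. The Pythagorean identity then gives
$$
\fnormsq{\mtx{A} - \hat{\mtx{A}}} = \fnormsq{\mtx{A} - \mtx{Q}\mtx{Q}^*\mtx{A}} + \fnormsq{\mtx{Q}(\mtx{Q}^*\mtx{A} - \mtx{X})}.
$$

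Finally, invoking isometry of multiplication by an orthonormal matrix on the left, $\fnormsq{\mtx{Q}\mtx{M}} = \trace(\mtx{M}^*\mtx{Q}^*\mtx{Q}\mtx{M}) = \fnormsq{\mtx{M}}$ for any conforming $\mtx{M}$, I would reduce $\fnormsq{\mtx{Q}(\mtx{Q}^*\mtx{A} - \mtx{X})}$ to $\fnormsq{\mtx{X} - \mtx{Q}^*\mtx{A}}$, which completes the identity. There is no real obstacle here; the only subtlety is to remember that the argument uses $\mtx{Q}^*\mtx{Q} = \Id$ and makes no use whatsoever of the specific definition $\mtx{X} = (\mtx{\Psi}\mtx{Q})^\dagger\mtx{W}$, so the decomposition is purely deterministic and independent of the sketch on the co-range side. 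This deterministic split is exactly what sets up the subsequent estimation: the first term will be controlled by \cref{fact:hmt-err}, while the second term will be handled by analyzing how well the least-squares solution $\mtx{X}$ approximates $\mtx{Q}^*\mtx{A}$.
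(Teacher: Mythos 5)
Your proof is correct and is precisely the argument the paper has in mind: it omits the details with the remark that the result ``is essentially just the Pythagorean theorem,'' and your decomposition $\mtx{A} - \hat{\mtx{A}} = (\Id - \mtx{QQ}^*)\mtx{A} + \mtx{Q}(\mtx{Q}^*\mtx{A} - \mtx{X})$, the vanishing cross term, and the left-isometry of $\mtx{Q}$ supply exactly those details. The only cosmetic point is that your symbol $\mtx{P}$ for the projector $\mtx{QQ}^*$ clashes with the paper's later use of $\mtx{P}$ in \cref{eqn:def-P} for an orthonormal basis of the complement, so a different letter would avoid confusion.
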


\noindent
We omit the proof, which is essentially just the Pythagorean theorem.

\subsubsection{Approximating the Second Factor}

Next, we develop an explicit expression for the error in
the approximation $\mtx{X} \approx \mtx{Q}^* \mtx{A}$.
It is convenient to
construct a matrix $\mtx{P} \in \F^{n \times (n-k)}$
with orthonormal columns that satisfies
\begin{equation} \label{eqn:def-P}
\mtx{PP}^* = \Id - \mtx{QQ}^*.
\end{equation}
Introduce the matrices
\begin{equation} \label{eqn:Psis}
\mtx{\Psi}_1 := \mtx{\Psi} \mtx{P} \in \F^{\ell \times (n-k)}
\quad\text{and}\quad
\mtx{\Psi}_2 := \mtx{\Psi} \mtx{Q} \in \F^{\ell \times k}.
\end{equation}
We are now prepared to state the result.

\begin{lemma} \label{lem:subspace-err}
Assume that the matrix $\mtx{\Psi}_2$ has full column-rank.  Then
\begin{equation} \label{eqn:B-Q*A}
\mtx{X} - \mtx{Q}^* \mtx{A} = \mtx{\Psi}_2^\dagger \mtx{\Psi}_1 (\mtx{P}^*\mtx{A}) .
\end{equation}
The matrices $\mtx{\Psi}_1$ and $\mtx{\Psi}_2$ are defined in \cref{eqn:Psis}.
\end{lemma}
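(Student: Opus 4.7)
The plan is to reduce the identity to a short algebraic manipulation, driven by the orthogonal decomposition of the identity into the range of $\mtx{Q}$ and its complement.

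First, I would unpack the definitions: by \cref{eqn:def-X} and the fact that $\mtx{W} = \mtx{\Psi}\mtx{A}$, we have $\mtx{X} = \mtx{\Psi}_2^\dagger \mtx{\Psi}\mtx{A}$, where $\mtx{\Psi}_2 = \mtx{\Psi}\mtx{Q}$. Next, I would use the splitting $\Id = \mtx{QQ}^* + \mtx{PP}^*$ implied by \cref{eqn:def-P}, which lets me rewrite
\begin{equation*}
\mtx{\Psi}\mtx{A} = \mtx{\Psi}\mtx{QQ}^*\mtx{A} + \mtx{\Psi}\mtx{PP}^*\mtx{A} = \mtx{\Psi}_2 (\mtx{Q}^*\mtx{A}) + \mtx{\Psi}_1 (\mtx{P}^*\mtx{A}),
\end{equation*}
using the notation of \cref{eqn:Psis}. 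Substituting back gives $\mtx{X} = \mtx{\Psi}_2^\dagger \mtx{\Psi}_2 (\mtx{Q}^*\mtx{A}) + \mtx{\Psi}_2^\dagger \mtx{\Psi}_1 (\mtx{P}^*\mtx{A})$.

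The final step invokes the hypothesis: since $\mtx{\Psi}_2 \in \F^{\ell \times k}$ has full column rank, $\mtx{\Psi}_2^\dagger \mtx{\Psi}_2 = \Id_k$ by a standard property of the Moore--Penrose pseudoinverse. Thus the first summand collapses to $\mtx{Q}^*\mtx{A}$ and rearranging yields \cref{eqn:B-Q*A}. There is no real obstacle here; the only subtlety worth flagging is the use of the full column-rank hypothesis to legitimize the identity $\mtx{\Psi}_2^\dagger \mtx{\Psi}_2 = \Id$, which is precisely where the assumption $\ell \geq k$ on the sketch sizes enters. In the subsequent analysis, \cref{fact:expect-gauss-frob} together with independence of $\mtx{\Psi}_1$ and $\mtx{\Psi}_2$ (conditional on $\mtx{Q}$) will be used to convert this exact expression into the expected Frobenius error bound.
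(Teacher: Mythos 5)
Your proposal is correct and follows essentially the same route as the paper: both decompose $\mtx{\Psi A}$ via $\Id = \mtx{QQ}^* + \mtx{PP}^*$ into $\mtx{\Psi}_2(\mtx{Q}^*\mtx{A}) + \mtx{\Psi}_1(\mtx{P}^*\mtx{A})$ and then apply $\mtx{\Psi}_2^\dagger$ on the left, using full column rank to collapse $\mtx{\Psi}_2^\dagger\mtx{\Psi}_2$ to the identity. Your version merely makes that last pseudoinverse identity explicit, which the paper leaves implicit in the phrase ``left-multiply the last display by $\mtx{\Psi}_2^\dagger$.''
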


\begin{proof}
Recall that $\mtx{W} = \mtx{\Psi} \mtx{A}$, and calculate that
$$
\begin{aligned}
\mtx{W} = \mtx{\Psi} \mtx{A}
	= \mtx{\Psi}\mtx{PP}^* \mtx{A} + \mtx{\Psi} \mtx{QQ}^* \mtx{A}
	= \mtx{\Psi}_1 (\mtx{P}^*\mtx{A}) + \mtx{\Psi}_2 (\mtx{Q}^* \mtx{A}).
\end{aligned}
$$
The second relation holds because $\mtx{PP}^* + \mtx{QQ}^* = \Id$.
Then we use \cref{eqn:Psis} to identify $\mtx{\Psi}_1$ and $\mtx{\Psi}_2$.
By hypothesis, the matrix  $\mtx{\Psi}_2$ has full column-rank,
so we can left-multiply the last display by $\mtx{\Psi}_2^\dagger$ to attain
$$
\mtx{\Psi}_2^\dagger \mtx{W} = \mtx{\Psi}_2^\dagger \mtx{\Psi}_1 (\mtx{P}^* \mtx{A}) + \mtx{Q}^* \mtx{A}.
$$
Turning back to~\cref{eqn:def-X},
we identify $\mtx{X} = \mtx{\Psi}_2^\dagger \mtx{W}$.
\end{proof}

\subsubsection{The Expected Frobenius-Norm Error in the Second Factor}

We are now prepared to compute the average Frobenius-norm error in approximating
$\mtx{Q}^*\mtx{A}$ by means of the matrix $\mtx{X}$.  In contrast
to the previous steps, this part of the argument relies on
distributional assumptions on the test matrix $\mtx{\Psi}$.
Remarkably, for a Gaussian test matrix, $\mtx{X}$
is even an unbiased estimator
of the factor $\mtx{Q}^*\mtx{A}.$

\begin{lemma} \label{lem:avg-subspace-err}
Assume that $\mtx{\Psi} \in \F^{\ell \times n}$
is a standard normal matrix that is independent from $\mtx{\Omega}$.
Then
$$
\Expect_{\mtx{\Psi}}[  \mtx{X} - \mtx{Q}^* \mtx{A} ] = \mtx{0}.
$$
Furthermore,
$$
\Expect_{\mtx{\Psi}} \fnormsq{ \mtx{X}  - \mtx{Q}^* \mtx{A} }
	= f(k,\ell) \cdot \fnormsq{ \mtx{A} - \mtx{QQ}^* \mtx{A} }.
$$
\end{lemma}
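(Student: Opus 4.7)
The plan is to start from the explicit identity $\mtx{X} - \mtx{Q}^*\mtx{A} = \mtx{\Psi}_2^\dagger \mtx{\Psi}_1 (\mtx{P}^*\mtx{A})$ furnished by \cref{lem:subspace-err}, and reduce both claims to the two Gaussian moment formulae in \cref{fact:expect-gauss-frob}. The first step I would take is to argue that, conditional on $\mtx{\Omega}$, the two blocks $\mtx{\Psi}_1$ and $\mtx{\Psi}_2$ are \emph{independent} standard normal matrices. Since $\mtx{Q}$ and $\mtx{P}$ both have orthonormal columns and $\mtx{QQ}^* + \mtx{PP}^* = \Id$, the concatenation $[\mtx{Q}\ \mtx{P}] \in \F^{n \times n}$ is an orthogonal matrix (unitary when $\F = \C$). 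The standard normal distribution is invariant under right multiplication by such a matrix, so $\mtx{\Psi}[\mtx{Q}\ \mtx{P}] = [\mtx{\Psi}_2\ \mtx{\Psi}_1]$ is again standard normal, and in particular the two blocks are independent.

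For the mean-zero claim, I would condition on $\mtx{\Psi}_2$ and pull the pseudoinverse out: by independence,
$\Expect_{\mtx{\Psi}_1}[ \mtx{\Psi}_2^\dagger \mtx{\Psi}_1 (\mtx{P}^*\mtx{A}) ] = \mtx{\Psi}_2^\dagger \, \Expect[\mtx{\Psi}_1] \, (\mtx{P}^*\mtx{A}) = \mtx{0}$. The tower property of conditional expectation then delivers $\Expect_{\mtx{\Psi}}[\mtx{X} - \mtx{Q}^*\mtx{A}] = \mtx{0}$. The hypothesis $\ell > k + \alpha$ is implicit here, since it ensures $\Expect \fnorm{\mtx{\Psi}_2^\dagger}$ is finite and justifies the order of integration.

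For the Frobenius-norm identity I would again condition on $\mtx{\Psi}_2$ and apply \cref{fact:expect-gauss-frob} with $\mtx{B} := \mtx{\Psi}_2^\dagger$, $\mtx{G} := \mtx{\Psi}_1$, and $\mtx{C} := \mtx{P}^*\mtx{A}$ to obtain
$\Expect_{\mtx{\Psi}_1} \fnormsq{\mtx{\Psi}_2^\dagger \mtx{\Psi}_1 (\mtx{P}^*\mtx{A})} = \beta \, \fnormsq{\mtx{\Psi}_2^\dagger} \, \fnormsq{\mtx{P}^*\mtx{A}}$.
Taking expectation over $\mtx{\Psi}_2$ and invoking the pseudoinverse formula $\Expect \fnormsq{\mtx{\Psi}_2^\dagger} = \beta^{-1} f(k,\ell)$ (again, legitimate precisely when $\ell > k + \alpha$), the two factors of $\beta$ cancel and leave $f(k,\ell) \cdot \fnormsq{\mtx{P}^*\mtx{A}}$. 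Finally, using $\mtx{P}^*\mtx{P} = \Id$ and $\mtx{PP}^* = \Id - \mtx{QQ}^*$ from \cref{eqn:def-P}, I rewrite $\fnormsq{\mtx{P}^*\mtx{A}} = \fnormsq{\mtx{PP}^*\mtx{A}} = \fnormsq{\mtx{A} - \mtx{QQ}^*\mtx{A}}$, which matches the stated expression.

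The main delicate point is really the independence of $\mtx{\Psi}_1$ and $\mtx{\Psi}_2$: without it the decoupling in both the mean and second-moment calculations would not go through. Everything past that step is a direct substitution into the two moment identities of \cref{fact:expect-gauss-frob}, and the clean cancellation of $\beta$ is what makes the final statement field-independent.
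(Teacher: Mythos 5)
Your proposal is correct and follows essentially the same route as the paper: establish that $\mtx{\Psi}_1$ and $\mtx{\Psi}_2$ are independent standard normal matrices (the paper cites the marginal property of the Gaussian where you invoke unitary invariance of $[\mtx{Q}\ \mtx{P}]$ — the same underlying fact), then condition on $\mtx{\Psi}_2$ and apply the two moment formulae of \cref{fact:expect-gauss-frob}, with the $\beta$ factors cancelling and $\fnormsq{\mtx{P}^*\mtx{A}} = \fnormsq{\mtx{A} - \mtx{QQ}^*\mtx{A}}$ finishing the argument. Your remark that $\ell > k + \alpha$ is what makes $\Expect\fnormsq{\mtx{\Psi}_2^\dagger}$ finite is a correct and worthwhile observation that the paper leaves implicit.
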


\begin{proof}
Observe that $\mtx{P}$ and $\mtx{Q}$ are partial isometries with orthogonal ranges.
Owing to the marginal property of the standard normal distribution,
the random matrices $\mtx{\Psi}_1$ and $\mtx{\Psi}_2$
are statistically independent standard normal matrices.
In particular, $\mtx{\Psi}_2 \in \F^{\ell \times k}$ almost surely has full column-rank
because~\cref{eqn:param-assumption} requires that $\ell \geq k$.

First, take the expectation of the identity~\cref{eqn:B-Q*A} to see that
$$
\Expect_{\mtx{\Psi}}[ \mtx{X} -  \mtx{Q}^* \mtx{A} ]
	= \Expect_{\mtx{\Psi}_2}\Expect_{\mtx{\Psi}_1} [ \mtx{\Psi}_2^\dagger \mtx{\Psi}_1 \mtx{P}^* \mtx{A} ]
	= \mtx{0}.
$$
In the first relation, we use the statistical
independence of $\mtx{\Psi}_1$ and $\mtx{\Psi}_2$
to write the expectation as an iterated expectation.
Then we observe that $\mtx{\Psi}_1$ is a matrix with zero mean.

Next, take the expected squared Frobenius norm of~\cref{eqn:B-Q*A} to see that
$$
\begin{aligned}
\Expect_{\mtx{\Psi}} \fnormsq{ \mtx{X} - \mtx{Q}^* \mtx{A} }
	&= \Expect_{\mtx{\Psi}_2} \Expect_{\mtx{\Psi}_1} \fnormsq{ \mtx{\Psi}_2^\dagger \mtx{\Psi}_1 (\mtx{P}^* \mtx{A}) } \\
	&= \beta \cdot \Expect_{\mtx{\Psi}_2} \big[ \fnormsq{\mtx{\Psi}_2^\dagger} \cdot \fnormsq{\mtx{P}^* \mtx{A}} \big]
	= f(k, \ell) \cdot  \fnormsq{\mtx{P}^*\mtx{A}}.
\end{aligned}
$$
The last two identities follow from~\cref{eqn:expect-gauss-frob}
and~\cref{eqn:expect-gauss-pinv-frob} respectively,
where we use the fact that $\mtx{\Psi}_2 \in \F^{\ell \times k}$.
To conclude, note that
$$
\fnormsq{\mtx{P}^*\mtx{A}} = \fnormsq{\mtx{PP}^*\mtx{A} }
	= \fnormsq{\mtx{A} - \mtx{QQ}^* \mtx{A} }.
$$
The first relation holds because $\mtx{P}$ is a partial isometry and the Frobenius
norm is unitarily invariant.  Last, we apply the definition~\cref{eqn:def-P} of $\mtx{P}$.
\end{proof}

\subsubsection{Proof of \cref{thm:err-frob}}

We are now prepared to complete the proof of
the Frobenius-norm error bound stated in \cref{thm:err-frob}.
For this argument, we assume that the test matrices
$\mtx{\Omega}\in \F^{n \times k}$ and $\mtx{\Psi} \in \F^{\ell \times m}$
are drawn independently from the standard normal distribution.

According to \cref{lem:err-decomp},
$$
\fnormsq{ \mtx{A} - \hat{\mtx{A}} }
	= \fnormsq{ \mtx{A} - \mtx{QQ}^* \mtx{A} } + \fnormsq{ \mtx{X} - \mtx{Q}^* \mtx{A} }.
$$
Take the expectation of the last display to reach
$$
\begin{aligned}
\Expect \fnormsq{\mtx{A} - \hat{\mtx{A}}}
	&= \Expect_{\mtx{\Omega}} \fnormsq{\mtx{A} - \mtx{QQ}^* \mtx{A}}
	+ \Expect_{\mtx{\Omega}} \Expect_{\mtx{\Psi}} \fnormsq{ \mtx{X} - \mtx{Q}^* \mtx{A}} \\
	&= (1+f(k,\ell)) \cdot \Expect_{\mtx{\Omega}} \fnormsq{\mtx{A} - \mtx{QQ}^* \mtx{A}} \\
	&\leq (1+ f(k,\ell)) \cdot (1+f(\varrho, k)) \cdot \tau_{\varrho+1}^2(\mtx{A}).
\end{aligned}
$$
In the first line, we use the independence of the two random matrices to
write the expectation as an iterated expectation.  To reach the second line,
we apply \cref{lem:avg-subspace-err} to the second term.
Invoke the randomized linear algebra result, \cref{fact:hmt-err}.
Finally, minimize over eligible indices $\varrho < k - \alpha$.

\bibliographystyle{siamplain}

\end{document}